\numberwithin{equation}{section}
\newtheorem{theorem}{Theorem}[section]
\newtheorem{proposition}[theorem]{Proposition}
\newtheorem{lemma}[theorem]{Lemma}
\newtheorem{cor}[theorem]{Corollary}
\newtheorem{ex}[theorem]{Example}
\newenvironment{example}{\begin{ex}\rm}{\end{ex}}
\newtheorem*{Hilb90}{Hilbert's Theorem 90}
\newtheorem*{Shapiro}{Shapiro's Lemma}
\theoremstyle{remark}
\theoremstyle{definition}
\newtheorem{rem}[theorem]{Remark}
\newcommand{\sfC}{{\mathsf{C}}}
\newcommand{\scrT}{{\mathscr{T}}}
\newcommand{\scrN}{{\mathscr{N}}}
\newcommand{\scrP}{{\mathscr{P}}}
\newcommand{\calG}{{\mathcal{G}}}
\newcommand{\calO}{{\mathcal{O}}}
\newcommand{\calV}{{\mathcal{V}}}
\newcommand{\frA}{{\mathfrak{A}}}
\newcommand{\frB}{{\mathfrak{B}}}
\newcommand{\frC}{{\mathfrak{C}}}
\newcommand{\A}{{\mathbb{A}}}
\newcommand{\C}{{\mathbb{C}}}
\newcommand{\G}{{\mathbb{G}}}
\renewcommand{\P}{{\mathbb{P}}}
\newcommand{\R}{{\mathbb{R}}}
\newcommand{\Rpos}{\R_{>0}}
\newcommand{\T}{{\mathbb{T}}}
\newcommand{\Z}{{\mathbb{Z}}}
\newcommand{\One}{{\mathbf{1}}}
\newcommand{\bc}{{\bf c}}
\DeclareMathOperator{\Aut}{\sf Aut}
\newcommand{\TAut}{\mbox{\sf Aut}^{\mbox{\scriptsize\sf T}}}
\DeclareMathOperator{\TAutS}{\Aut^{\sf T}_{\Sigma}}
\DeclareMathOperator{\Gal}{Gal}
\DeclareMathOperator{\Hom}{Hom}
\DeclareMathOperator{\rank}{rank}
\DeclareMathOperator{\spec}{Spec}
\DeclareMathOperator{\coker}{coker}
\DeclareMathOperator{\colim}{colim}
\DeclareMathOperator{\GL}{GL}
\DeclareMathOperator{\Img}{Im}
\DeclareMathOperator{\Ind}{Ind}
\DeclareMathOperator{\Res}{Res}
\DeclareMathOperator{\Ext}{Ext}
\newcommand{\DeCo}[1]{\Blue{#1}}
\newcommand{\str}[1]{\sout{\Blue{#1}}}
\newcommand{\demph}[1]{\DeCo{{\sl #1}}}
\newcommand{\symg}[1]{\mathfrak{S}_{#1}}
\newcommand{\cl}[1]{C\ell(#1)}
\renewcommand{\hom}[3]{\Hom_{#1}(#2, #3)}
\newcommand{\units}[1]{{#1}^{\times}}
\newcommand{\dual}[1]{{#1}^{\vee}}
\DeclareMathOperator{\Br}{Br}
\newcommand{\brauer}[1]{\Br(#1)}
\newcommand{\etakernel}{\Br_{\eta}(E/k \, | \, L/F)}
\newcommand{\gm}{\G_m}
\newcommand{\mbm}{\mathbf{m}}
\newcommand{\mbr}{\mathbf{r}}
\title{Arithmetic toric varieties}
\author[Elizondo]{E. Javier Elizondo}
\address{E. Javier Elizondo\\
     Instituto de Matem\'aticas\\
     Universidad Nacional Aut\'onoma de M\'exico\\
     \'Area de la Inv.\ cient\'\i fica\\
      Circuito Exterior, Ciudad Universitaria\\
      M\'exico, D.F.\ 04510, M\'exico}
\email{javier@math.unam.mx}
\urladdr{http://www.math.unam.mx/\~{}javier}
\author[Lima-Filho]{Paulo Lima-Filho}
\address{Paulo Lima-Filho\\
         Department of Mathematics\\
         Texas A\&M University\\
         College Station\\
         Texas \ 77843\\
         USA}
\email{plfilho@math.tamu.edu}
\urladdr{http://www.math.tamu.edu/\~{}plfilho}
\author[Sottile]{Frank Sottile}
\address{Frank Sottile\\
         Department of Mathematics\\
         Texas A\&M University\\
         College Station\\
         Texas \ 77843\\
         USA}
\email{sottile@math.tamu.edu}
\urladdr{http://www.math.tamu.edu/\~{}sottile}
\author[Teitler]{Zach Teitler}
\address{Zach Teitler\\
         Department of Mathematics\\
         Boise State University\\
         1910 University Drive\\
         Boise\\
         Idaho \ 83725--1555\\
         USA}
\email{zteitler@math.boisestate.edu}
\urladdr{http://math.boisestate.edu/\~{}zteitler}
\thanks{Research of Sottile supported in part by NSF grants DMS-070105 and DMS-1001615}
\thanks{Research of Elizondo supported in part by by CONACYT 101519  and DGAPA IN100109}
\subjclass{14M25, 11E72}
\keywords{toric variety, Galois cohomology}
\begin{document}

\begin{abstract}
 We study toric varieties over a field $k$ that split in a Galois extension $K/k$
 using Galois cohomology with coefficients in the toric automorphism group.
 Part of this Galois cohomology fits into an exact sequence induced by the
 presentation of the class group of the toric variety.
 This perspective helps to compute the Galois cohomology, particularly for cyclic 
 Galois groups.
 We use Galois cohomology to classify $k$-forms of projective spaces when $K/k$ is cyclic,
 and we also study $k$-forms of surfaces.  
\end{abstract}

\maketitle
%

\section{Introduction}

Toric varieties provide a rich class of accessible examples in algebraic geometry.
This stems from  their {simple}
classification~\cite{Demazure,Fulton,KKMSD,Oda}:
To each fan in a lattice, there is a
normal scheme over $\Z$ equipped with a faithful action of a diagonalizable (split) torus
which has a dense orbit.
Extending scalars to a field $k$ gives the \demph{split toric variety} over $k$ associated to
the fan. 
Every normal variety over $k$ equipped with a faithful action of a split torus which has a
dense orbit is a (split) toric variety for some fan.

An \demph{arithmetic toric variety} is a normal variety $Y$ over a field $k$
that is equipped with a faithful action of a (not necessarily split) algebraic torus
$\scrT$ over $k$ which has a dense orbit in $Y$.
This dense orbit is a torsor over $\scrT$, so arithmetic toric varieties are normal
equivariant compactifications of torsors.
Extending scalars to a finite Galois extension $K/k$  over which $\scrT$ splits,
$Y_K$ becomes a split toric variety $X_\Sigma$ for some fan $\Sigma$.
Thus $Y$ is a $k$-form of the toric variety $X_{\Sigma}$.
There are non-split $k$-forms of a toric variety only when its fan has some symmetry, and
so this theory is most interesting for highly symmetric toric varieties.

The $k$-forms of a quasiprojective variety $X$ over $K$ are in bijection with the Galois
cohomology set $H^1(K/k, \Aut(X))$~(see \cite[III]{Serre} or Section~\ref{S:GalCoh}).
In general Galois cohomology classifies what are called twisted forms of $X$, and a twisted
form $Y$ descends to a variety over $k$ if and only if every $\Gal(K/k)$-orbit in $Y$ is
contained in some affine open subset.
(This condition is clearly satisfied when $X$ is quasiprojective.)

The twisted forms of the toric variety $X_{\Sigma}$ are in bijection with
the Galois cohomology set $H^1(K/k, \TAutS)$, where
 $\TAutS$ is the algebraic  group of toric automorphisms of $X_{\Sigma}$.
When $\Sigma$ is a quasiprojective fan, this classifies $k$-forms of $X_{\Sigma}$ as every
twisted form descends to a variety over $k$.
Similarly, every twisted form of $X_\Sigma$ descends to a variety over $k$ when
$K/k$ is a quadratic extension, by a result of W{\l}odarczyk~\cite{Wl93}.
For general fans $\Sigma$, we offer a simple condition which implies that a twisted
form descends to a variety over $k$.

A toric automorphism gives an automorphism of the corresponding lattice $N$
preserving the fan $\Sigma$.
Writing $\Aut_\Sigma$ for the group of such automorphisms, we have maps of algebraic groups
$\TAutS\to {\Aut_\Sigma}\hookrightarrow  \Aut(N)$ which in turn induce  maps of Galois
cohomology sets
 \begin{equation}
  \label{Eq:pi}
   H^1(K/k, \TAutS)\ \xrightarrow{\, \pi\, } \ H^1(K/k, \Aut_\Sigma)
   \xrightarrow{\, \jmath \, }  H^1(K/k, \Aut(N))\,.
 \end{equation}
The last Galois cohomology set classifies $k$-tori that split over
$K$, and the fiber of the map $\pi$ over a given element $\bc$ in $H^1(K/k, \Aut_\Sigma)$
classifies the different twisted forms of the toric variety for the torus ${_\varphi\scrT}$
associated to $\varphi=\jmath(\bc)$.
In Theorem~\ref{Th:splitting} we identify the fiber with the
quotient of the Galois cohomology set
$H^1(K/k,{_\varphi\scrT})$ (which classifies torsors over ${_\varphi\scrT}$)
by the action of $H^0(K/k,\Aut_\Sigma)$.
This leads to a classification of quasiprojective embeddings of tori extending the classical 
theory of torus embeddings.

Arithmetic toric varieties arose as tools to study anisotropic
(non-split) tori via smooth projective compactifications.
This began with Brylinski~\cite{Br79} who showed how to construct
a complete projective fan $\Sigma$ in a lattice $N$ that is invariant under the action of a
given group $G$ on $N$.
See also \cite{CTHS05}, which completed
Brylinski's construction.
Voskresenski{\u\i}~\cite{Vo82} (see also~\cite{Vo01}) started with a
torus $\scrT$ over a field $k$.
If $K$ is the splitting field of $\scrT$ then $\scrT_K\simeq\T_N$ and $\Gal(K/k)$ acts on
$N$.
Using Brylinski's $\Gal(K/k)$-invariant fan $\Sigma$, Voskresenski{\u\i} showed there is a
smooth toric variety $Y$ over $k$ with torus $\scrT$ such that $Y_K$ is isomorphic to the toric
variety $X_\Sigma$ associated to that fan.
(This is Theorem~1.3.4 in~\cite{BT95}.)
Batyrev and Tschinkel~\cite{BT95} used this to study rational points of
bounded height on compactifications of anisotropic tori.
We do not know of an attempt to classify these structures prior to Delaunay's work on
real forms of compact toric varieties~\cite{D03,D04},  in which she classifies real structures
of smooth toric surfaces.
Her work almost immediately found an application in geometric modeling when
Krasauskas~\cite{K01,KK05} proposed using Delaunay's real toric surfaces as
patches for geometric modeling.

This work of Voskresenski{\u\i} may be understood in terms of the
map $\pi$~\eqref{Eq:pi}, which
has a splitting  $H^1(K/k,\Aut_\Sigma)\hookrightarrow H^1(K/k,\TAutS)$.
When $X_\Sigma$ is smooth and projective and we have a $k$-form $\scrT$ of the torus $\T_N$
associated to a cocycle $\bc\in H^1(K/k,\Aut_\Sigma)$, the image of $\bc$ in
$H^1(K/k,\TAutS)$ corresponds to Voskresenski{\u\i}'s arithmetic toric varieties.

Huruguen recently studied~\cite{Hur} compactifications of spherical orbits,
which is both more general and more restrictive than our
work on arithmetic toric varieties.
A spherical orbit of a connected reductive algebraic group $G$ over $k$ is a pair $(X_0,x_0)$,
where $X_0$ is a homogeneous space for $G$ on which a Borel subgroup of $G$ has a dense orbit,
and $x_0\in X_0(k)$ is a $k$-rational point.
Huruguen develops an elegant theory of equivariant embeddings of spherical orbits that extends
the standard theory over algebraically closed fields, in which embeddings correspond to colored
fans~\cite{LV}.
This involves colored fans equipped with an action of the absolute Galois
group and a condition on descent.
Huruguen also gives several examples, including a three-dimensional toric variety, which do not
satisfy descent.
This is significantly more general than our work in that it applies to spherical varieties and it 
addresses the issue of descent, but it is also more restrictive in that it
requires a $k$-rational point.
This is essentially the same restriction as  in the work of Voskresenski{\u\i} and it
rules out many examples such as the Brauer-Severi varieties
of Section~\ref{S:realP1}.
\medskip

A toric variety $X_{\Sigma}$ is a geometric invariant theory quotient
of $\A^{\Sigma(1)}$, the vector space with basis the rays of $\Sigma$~\cite{Cox,Delzant}.
After possibly replacing $K$ by a field extension, any $\Gal(K/k)$-action lifts to a
permutation representation on $\A^{\Sigma(1)}$.
The class group of $X_{\Sigma}$ has an associated $\Gal(K/k)$-equivariant
presentation in which the action on the middle term is the corresponding permutation action on
$\Z^{\Sigma(1)}$. 
In Section~\ref{S:classgroup}, we show how this yields a long exact sequence
facilitating the computation of the Galois cohomology set $H^1(K/k, \TAutS)$.
We illustrate this when $\Gal(K/k)$ is a cyclic group, and use that to
classify $K/k$-forms of projective space for a cyclic extension $K$ of $k$.
In Section~\ref{sec:surf} we consider the Galois cohomology sets for fans in $\Z^2$, which
classify arithmetic toric surfaces.

In forthcoming work~\cite{RTV}, we use this classification when $k=\R$ to compute the
$\T \rtimes \Gal(\C/\R)$-equivariant cohomology of real toric varieties~and plan to use it to
investigate more refined equivariant invariants such as Bredon cohomology~\cite{Bredon}.
Similar ideas should enable the computation of $\T\rtimes \Gal(K/k)$-equivariant
cohomology of toric varieties that split over the field extension $K/k$.
We expect this perspective to be useful for arithmetic spherical varieties, extending the
work of Huruguen~\cite{Hur}.

\subsection*{Acknowledgments}
Elizondo would like to thank the hospitality and support given by the department of
mathematics at Texas A\&M University during his sabbatical year.


\section{Toric varieties, Galois cohomology, and \texorpdfstring{$k$}{k}-tori}\label{S:background}

We recall the classification and construction of toric varieties using fans and the
dual quotient construction, 
and then review Serre's treatment~\cite{Serre} of the classification of $k$-forms of a
variety and of $k$-forms of tori.
Our intention is to make this accessible to those who do not already know both the
theory of toric varieties and Galois cohomology.

Given an affine scheme $X=\spec R$ for a Noetherian ring $R$ and an ideal $I$ of $R$, we
write $\calV(I)$ for the subscheme of $X$ cut out by $I$.
For a scheme $X$ over $\Z$ and a field $K$, or for $X$ over a field $k$ and a field
extension $K$, write $X_K = X \times \spec(K)$ for the scheme obtained from $X$ by
extending scalars to $K$, and $X(K)$ for the $K$-rational points of $X$.

%
\subsection{Split toric varieties}
Demazure~\cite{Demazure} first constructed toric varieties as schemes over $\spec \Z$ from
the data of a unimodular fan.
Subsequent treatments in algebraic geometry~\cite{Fulton,KKMSD} begin with arbitrary fans,
but construct varieties over (typically algebraically closed) fields.
These latter constructions in fact give schemes over $\spec \Z$ as follows.

Let \DeCo{$N$} be a finitely generated free abelian group of rank $n$ with dual
$\DeCo{M}=\Hom(N,\Z)$.
The \demph{polar $\sigma^\vee$} of a finitely generated subsemigroup $\sigma$ of $N$
is
\[
   \DeCo{\sigma^\vee}\ :=\ \{ u\in M\mid u(v)\geq 0\quad\mbox{for all}\quad v\in\sigma\}\,,
\]
a finitely generated subsemigroup of $M$.
A \DeCo{{\sl cone}} is a finitely generated subsemigroup $\sigma$ that is
saturated,  $(\sigma^\vee)^\vee=\sigma$.
A \DeCo{{\sl face}} $\tau$ of a cone $\sigma$ is a subsemigroup of the form
\[
    \tau\ =\ \{v\in\sigma\mid u(v)=0\}
\]
for some $u\in\sigma^\vee$.
The cone $\sigma$ is \DeCo{{\sl pointed}} if $0$ is a face, in which case
$\sigma^\vee$ generates $M$.

To a pointed cone $\sigma$ in $N$, we associate the affine scheme
$\DeCo{V_\sigma}:=\spec \Z[\sigma^\vee]$ of the semigroup ring generated by
$\sigma^\vee$.
When $\tau$ is a face of $\sigma$, we have $\sigma^\vee\subset\tau^\vee$ and the
induced map $V_\tau\to V_\sigma$ is an open inclusion, as  $\Z[\tau^\vee]$ is a subring of
the quotient field of  $\Z[\sigma^\vee]$.

A \DeCo{{\sl fan} $\Sigma$} in $N$ is a finite collection of pointed cones
 in $N$ such that
 \begin{enumerate}
  \item Any face of a cone in $\Sigma$ is a cone in $\Sigma$.
  \item The intersection of any two cones of $\Sigma$ is a common face of each.
 \end{enumerate}
Given a fan $\Sigma$ in $N$ we construct the scheme \DeCo{$X_\Sigma$} by gluing the
affine schemes $V_\sigma$ for $\sigma$ a cone of $\Sigma$ along their common subschemes
corresponding to smaller cones in $\Sigma$.
Since every pointed cone $\sigma$ contains $0$ as a face,
$V_0$ is contained in $V_\sigma$ for every cone $\sigma \in \Sigma$.

Write $\T=\T_N$ for the algebraic group $\spec \Z[M]$ and let $\T(K)=\Hom(M,\units{K})$ be the
set of $K$-valued points of $\T$.
Then $\T=V_0$ and is isomorphic to $\G_m^{\rank(N)}$.
The inclusion $\Z[\sigma^\vee]\hookrightarrow \Z[M]$ of the semigroup ring into the group
ring induces a faithful action of $\T_N$ on $V_\sigma$.
That is, there is a map
\[
  \T_N \times_{\spec \Z} V_\sigma \longrightarrow V_\sigma
\]
given by the map of algebras $\Z[\sigma^\vee] \to \Z[M] \otimes \Z[\sigma^\vee]$
sending $u \mapsto u \otimes u$.
These actions are compatible with the inclusions $V_\tau\subset V_\sigma$
induced by the inclusion of a face $\tau$ of $\sigma$.
Thus $\T_N$ acts on $X_\Sigma$.
For any field $K$,
$\T_N(K)$ acts faithfully on $X_\Sigma(K)$ with a dense orbit $V_0(K)$.
Any base extension of the scheme $X_\Sigma$ is the
\DeCo{{\sl split toric variety}} associated to the fan $\Sigma$ over the given base.

Each affine scheme $V_\sigma$ for $\sigma\in\Sigma$ contains a distinguished point
$\DeCo{x_\sigma}$ corresponding to the  {prime} ideal of $\Z[\sigma^\vee]$ which is the kernel
of the map $\Z[\sigma^\vee]\to\Z$ defined by
 \begin{equation}\label{Eq:sigmacheck}
   \sigma^\vee\ \ni\ u\ \longmapsto\
   \left\{\begin{array}{rcl} 1&\quad& u\in\sigma^\perp\\
                             0&&\mbox{otherwise}\end{array}\right.\ ,
 \end{equation}
where $\sigma^\perp$ is the set of annihilators of $\sigma$ in $M$.
Note that $x_\sigma$ becomes a closed point in $V_{\sigma,K}$ after extending scalars to any field $K$
and (over $K$) the orbit $\DeCo{\calO_\sigma}$ of $x_\sigma$ is a dense
$\T_N(K)$-orbit in $V_\sigma(K)$.

Conversely, given a pair $(X, \T)$ such that $X$ is a normal variety on which the split torus
$\T$ acts faithfully with an open dense orbit, there is a lattice $N$ and a fan 
$\Sigma \subset N$ with $(X,\T) \cong (X_\Sigma, \T_N)$.
It may be recovered as described in, for example, \cite[\S2.3]{Fulton}.

When the cones of the fan $\Sigma$ span a sublattice of $N$ that does not have full rank,
the toric variety $X_\Sigma$ is the product of a torus and a smaller-dimensional toric variety
as follows.
Let $N'\subset N$ be the saturation in $N$ of the span of $\Sigma$ and write
$\Sigma'\subset N'$ for the fan $\Sigma$ considered as a fan in $N'$.
We have the split exact sequence
 \begin{equation}\label{Eq:SES}
   0\ \longrightarrow\ N'\ \longrightarrow\ N\
      \longrightarrow\ N/N'\ \longrightarrow\ 0\,,
 \end{equation}
so that $N\simeq N'\times N/N'$ and the toric variety $X_\Sigma$ likewise decomposes
\[
   X_\Sigma\ \simeq\ X_{\Sigma'}\times \T_{N/N'}\,.
\]
Since any toric automorphism of $X_{\Sigma}$ will respect this decomposition,
we will at times assume that $\Sigma$ spans a full rank sublattice of $N$.

%
%
\subsection{Automorphisms of toric varieties}

For this section, let $X=X_\Sigma$ be the split toric variety associated to a fan
$\Sigma\subset N$ with torus $\T=\T_N$.
A \DeCo{{\sl toric automorphism}} of $X_{K}$ is a pair $(\alpha,\varphi)$, where $\alpha$ is
an automorphism of the variety $X_{K}$ and $\varphi$ is a group automorphism of the torus
$\T_{K}$, and these automorphisms intertwine the action of  {$\T_{K}$} on $X_{K}$,
\[
 \xymatrix{
    \T_N\times X_K \ar[d]_{(\varphi,\alpha)} \ar[r] & X_K \ar[d]^{\alpha}\\
     \T_N\times X_K \ar[r]&X_K
 }
\]
In particular, if $t\in\T(K)$ and $x\in X(K)$ then
$ \alpha(tx) = {^{{\varphi}} t} \alpha(x)$, 
where ${^{{\varphi}} t}$ is the image of $t$ under  {$\varphi$}.
Since  $N=\Hom(\G_m,\T_N)$, any automorphism of $\T_N$ is naturally induced
by an automorphism $\varphi$ of $N$, and we use the same notation for both.

Since the fan $\Sigma$ may be recovered from the pair
$(X, \T)$, if {$(\alpha, \varphi)$} is a toric automorphism of $X_{K}$, then
$\varphi$ lies in the group $\DeCo{\Aut_\Sigma}$ of automorphisms of $N$ that
preserve the fan $\Sigma$, because it maps torus orbits to torus orbits.

Given a toric automorphism {$(\alpha, \varphi)$} of $X_{K}$, let
$\DeCo{t_\alpha}\in \T(K)$
be defined by
\[
    \alpha(x_0)\ =\ t_\alpha x_0\,.
\]
There is such a $t_\alpha$ as the orbit $\calO_0$ of $x_0$ is the unique dense orbit of
$\T(K)$ on $X_{K}$ and $\T(K)$ acts freely on {$\calO_0(K)$}.
If $(\beta,\psi)$ is another toric automorphism, then
\[
   \beta\circ\alpha(x_0)\ =\ \beta(t_\alpha x_0) \ =\
    { ^{\psi} t_\alpha} t_\beta x_0\,,
\]
and so $t_{\beta\circ\alpha}=t_\beta{^{\psi} t_\alpha}$.
Thus the map {$(\alpha,\varphi) \mapsto (t_\alpha,\varphi)$}
is a homomorphism from the group of toric automorphisms of
$X_{K}$ to the semidirect product $\T(K)\rtimes\Aut_\Sigma$.
The algebraic group $\TAutS$ is $\T \rtimes \Aut_\Sigma$ 
which has $K$-valued points $\DeCo{\TAutS(K)} = \T(K) \rtimes \Aut_\Sigma$.

%
%
\subsection{Homogeneous coordinates for toric varieties}\label{Cox}

A split toric variety $X_{\Sigma,K}$ may also be constructed as a quotient of an open subset
of affine space by an algebraic torus.
For more details and further references, see~\cite[\S2]{Cox_PSPUM}.
This construction leads to a long exact sequence that will help us to compute Galois cohomology
sets.

Let \DeCo{$\Sigma(1)$} be the set of 1-dimensional cones of $\Sigma$ which we assume spans a
full rank sublattice of $N$.
Let $\{\DeCo{v_\rho} \mid \rho \in \Sigma(1)\}$ be the standard basis for the free abelian
group $\Z {\Sigma(1)}$ and $\{\DeCo{u_\rho} \mid \rho\in\Sigma(1)\}$ be the dual basis for
$\Z^{\Sigma(1)}$, which gives coordinates for the affine space
$\A^{\Sigma(1)}:=\spec\Z[u_\rho\mid\rho\in\Sigma(1)]$. 

Every subset $\tau $ of $\Sigma(1)$ corresponds to the cone $\widetilde{\tau}$ generated by the
basis vectors $\{v_\rho\mid \rho\in\tau\}$ indexed by $\tau$.
Let $\DeCo{\widetilde{\Sigma}}$ be the fan in {$\Z {\Sigma(1)}$} whose cones are
$\widetilde{\tau}$ as $\tau$ ranges over subsets of the rays of cones $\sigma$ in the fan
$\Sigma$.
Then the split toric variety $X_{\widetilde{\Sigma}}$ is exactly $\A^{\Sigma(1)}\setminus
Z(\Sigma)$, where $Z(\Sigma)$ is the union of coordinate subspaces defined by the monomial ideal
\[
  \Big\langle \prod_{\rho\not\in\tau} u_\rho\;\mid\; \tau\subset\sigma\in\Sigma\Big\rangle
    \ =\
  \Big\langle \prod_{\rho\not\in\sigma} u_\rho\;\mid\; \sigma\in\Sigma\Big\rangle\ .
\]

To see this, recall that $X_{\widetilde{\Sigma}}$ is the union of affine varieties
$V_{\widetilde{\tau}}$.
Since each cone 
$\widetilde{\tau}$ is generated by a subset 
($\{v_\rho\mid \rho\in\tau\}$) of the coordinate vectors, we have
\[
  V_{\widetilde{\tau}}\ \simeq\
  \A^\tau\times \G_m^{\tau^c}\ =\
   \A^{\Sigma(1)}\setminus \calV\Bigl(\prod_{\rho\not\in \tau} x_\rho\Bigr)\,.
\]
(Here $\A^\tau$ is the coordinate subspace spanned by coordinate vectors $e_\rho$ indexed by
 rays $\rho\in\tau$ and $\tau^c=\Sigma(1)\setminus \tau$ is the complement of $\tau$).
Thus
\[
  X_{\widetilde{\Sigma}}\ =\
  \A^{\Sigma(1)}\setminus \bigcap_\tau \calV\Bigl(\prod_{\rho\not\in \tau} u_\rho\Bigr)
    \ =\   \A^{\Sigma(1)}\setminus
   \calV\Bigl(\Bigl\langle \prod_{\rho\not\in\tau} u_\rho\;\mid\;
        \tau\subset\sigma\in\Sigma\Bigr\rangle\Bigr)\ .
\]

Since $\Sigma$ spans a full rank sublattice of $N$, the dual of the map $\Z\Sigma(1)\to N$
gives a short exact sequence of finitely generated abelian groups
\[
  0\ \longrightarrow\  M\  \longrightarrow\  \Z^{\Sigma(1)}\
      \longrightarrow\  \cl{\Sigma}\  \longrightarrow\  0\,,
\]
where $\cl{\Sigma}$ is the class group of $X_{\Sigma}$.
See \cite[\textsection 3.4]{Fulton}.
We have the corresponding sequence of algebraic groups
\[
  1\ \longrightarrow\ G_\Sigma\ \longrightarrow\
    \G_m^{\Sigma(1)}\ \longrightarrow\ \T_N\ \longrightarrow\ 1\,,
\]
where $G_\Sigma:=\spec \Z[\cl{\Sigma}]$.
Thus we may identify $\T_N$ with $\G_m^{\Sigma(1)}/G_\Sigma$.

The homomorphism $\Z{\Sigma(1)}\to N$ induces a map $\widetilde{\Sigma}\to\Sigma$
and a surjection of toric varieties
$X_{\widetilde{\Sigma}}\to X_\Sigma$~\cite[\S1.4]{Fulton}.
This map is $\G_m^{\Sigma(1)}$-equivariant where the action on $X_\Sigma$ is through the
quotient $\T_N=\G_m^{\Sigma(1)}/G_\Sigma$.
In particular, it is $G_\Sigma$-equivariant, where $G_\Sigma$ acts trivially on $X_\Sigma$.

\begin{theorem}[Theorem 2.1 in~\cite{Cox,Cox_PSPUM}]\label{TH:Cox}
  Let $X_\Sigma$ be a toric scheme over $\spec \Z$ whose $1$-dimensional cones span a full
  rank sublattice of $N$.
  Then
 \begin{enumerate}[\rm (1)]
  \item $X_\Sigma$ is the categorical quotient $X_{\widetilde{\Sigma}}/\!/G_\Sigma$, and

  \item $X_\Sigma$ is the geometric quotient  $X_{\widetilde{\Sigma}}/G_\Sigma$
        if and only if $\Sigma$ is a simplicial fan.

 \end{enumerate}
\end{theorem}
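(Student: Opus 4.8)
The plan is to establish both statements affine-locally over the charts $V_\sigma$ of $X_\Sigma$ and then to glue. For a cone $\sigma\in\Sigma$ let $\widetilde{\sigma}\subset\Z\Sigma(1)$ be the cone spanned by the basis vectors $v_\rho$ indexed by the rays $\rho$ of $\sigma$, and set $U_\sigma:=V_{\widetilde{\sigma}}$, so that $U_\sigma\simeq\A^{\sigma(1)}\times\G_m^{\Sigma(1)\setminus\sigma(1)}$ (writing $\sigma(1)\subseteq\Sigma(1)$ for the rays of $\sigma$) is an affine open of $X_{\widetilde{\Sigma}}$ and $X_{\widetilde{\Sigma}}=\bigcup_{\sigma\in\Sigma}U_\sigma$. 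First I would record the compatibilities $U_\sigma\cap U_{\sigma'}=U_{\sigma\cap\sigma'}$ and $V_\sigma\cap V_{\sigma'}=V_{\sigma\cap\sigma'}$ (both reduce to the fact that a ray lies in $\sigma\cap\sigma'$ precisely when it lies in each of $\sigma$ and $\sigma'$), and note that each $U_\sigma$ is stable under $G_\Sigma\subset\G_m^{\Sigma(1)}$, which acts diagonally and so preserves the vanishing loci of the coordinates $u_\rho$. Thus the $G_\Sigma$-invariant surjection $X_{\widetilde{\Sigma}}\to X_\Sigma$ restricts to $G_\Sigma$-invariant surjections $U_\sigma\to V_\sigma$.

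Since $G_\Sigma=\spec\Z[\cl{\Sigma}]$ is diagonalizable, it is linearly reductive over $\spec\Z$, so each affine quotient $U_\sigma\to\spec\bigl(\calO(U_\sigma)^{G_\Sigma}\bigr)$ is a categorical quotient, it commutes with localization, and these quotients glue. Hence part~(1) reduces to identifying $\calO(U_\sigma)^{G_\Sigma}$ with $\calO(V_\sigma)=\Z[\sigma^\vee\cap M]$. The action of $G_\Sigma$ on $\calO(U_\sigma)=\Z[u_\rho\mid\rho\in\sigma(1)][u_\rho^{\pm1}\mid\rho\notin\sigma(1)]$ is by the $\cl{\Sigma}$-grading in which $\deg u_\rho$ is the image of $u_\rho$ in $\cl{\Sigma}$; a monomial $\prod_\rho u_\rho^{a_\rho}$ is $G_\Sigma$-invariant exactly when its class in $\cl{\Sigma}$ vanishes, which by exactness of $0\to M\to\Z^{\Sigma(1)}\to\cl{\Sigma}\to0$ happens exactly when $(a_\rho)_\rho=(\langle m,v_\rho\rangle)_\rho$ for a unique $m\in M$. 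Imposing additionally $a_\rho\ge0$ for $\rho\in\sigma(1)$ is precisely the condition $m\in\sigma^\vee$, so $\calO(U_\sigma)^{G_\Sigma}=\Z[\chi^m\mid m\in\sigma^\vee\cap M]=\calO(V_\sigma)$. Gluing the identifications $U_\sigma/\!/G_\Sigma=V_\sigma$ (using that the $V_\sigma$ cover $X_\Sigma$ with preimages the $U_\sigma$) shows $X_{\widetilde{\Sigma}}\to X_\Sigma$ is the categorical quotient, proving~(1).

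For part~(2), recall that a categorical quotient by the linearly reductive group $G_\Sigma$ is a geometric quotient exactly when every geometric fibre is a single $G_\Sigma$-orbit (equivalently, all orbits are closed), and that geometric quotients glue; so it suffices to show $U_\sigma\to V_\sigma$ is geometric for every $\sigma$ iff $\Sigma$ is simplicial. The key computation is the fibre over the distinguished point $x_\sigma$: pulling back the equations $\chi^m=0$ for $m\in\sigma^\vee\setminus\sigma^\perp$ and $\chi^m=1$ for $m\in\sigma^\perp$, one finds that this fibre consists of those $(a_\rho)\in U_\sigma$ for which $\{\rho\in\sigma(1)\mid a_\rho=0\}$ is not contained in any proper face of $\sigma$, with the nonzero coordinates lying in a fixed coset of a subtorus. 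If $\sigma$ is simplicial the rays $v_\rho$, $\rho\in\sigma(1)$, are linearly independent; this forces that set to be all of $\sigma(1)$, so the fibre is exactly $\{a_\rho=0\mid\rho\in\sigma(1)\}$, on which $G_\Sigma$ acts transitively because the relevant stabilizer $\ker(\G_m^{\sigma(1)}\to\T_N)$ is finite. An analogous (easier) argument over every other point of $V_\sigma$ then gives that $U_\sigma\to V_\sigma$ is geometric, and gluing proves that direction. Conversely, if some $\sigma$ is not simplicial, choose a nontrivial relation $\sum_{\rho\in\sigma(1)}a_\rho v_\rho=0$; since $\sigma$ is pointed it has coefficients of both signs, and the one-parameter subgroup $\lambda\colon t\mapsto(t^{a_\rho})_\rho$ of $G_\Sigma$ together with a suitable point $p$ of a fibre over $x_\tau$ for a non-simplicial face $\tau\preceq\sigma$ (reached by induction on $\#\sigma(1)$) satisfies that $\lim_{t\to0}\lambda(t)\cdot p$ exists and has strictly more vanishing coordinates than $p$, hence lies in a different $G_\Sigma$-orbit of the same fibre. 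So that fibre contains more than one orbit, the quotient is not geometric, and~(2) follows.

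The main obstacle is the ``only if'' half of~(2): turning a linear dependence among the rays of a non-simplicial cone into a genuinely non-closed $G_\Sigma$-orbit, which requires pinning down which subset of $\sigma(1)$ to place the zeros of $p$ on so that the limit stays in the same fibre while leaving the orbit (this is the combinatorial heart, and where the induction on the number of rays enters). A secondary point is that the closed-orbit criterion and the fibre computations must be run over $\spec\Z$ rather than over an algebraically closed field; a clean route is to base change the geometric-quotient statement to $\overline{k}$ for every field $k$ and invoke the classical form of the theorem, while keeping the invariant-ring computation of~(1) over $\Z$. By contrast, part~(1) is, once the $\cl{\Sigma}$-grading is in place, a routine computation with a graded ring.
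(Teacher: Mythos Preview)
The paper does not prove this theorem at all: it is quoted verbatim as ``Theorem~2.1 in~\cite{Cox,Cox_PSPUM}'' and used as background, with no argument supplied beyond the definitions of categorical quotient and simplicial fan that follow the statement. So there is no proof in the paper to compare your proposal against.

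That said, your outline is essentially the standard argument from Cox's original paper: reduce to the affine charts $U_\sigma\to V_\sigma$, compute the ring of $G_\Sigma$-invariants via the $\cl{\Sigma}$-grading to get~(1), and for~(2) analyze the fibres over distinguished points to see when they consist of a single orbit. Your invariant computation in~(1) is clean and correct. In~(2), the ``if simplicial'' direction is fine; in the converse you are right that the combinatorial heart is producing a genuinely non-closed orbit from a linear relation among the $v_\rho$, though the induction you gesture at is more elaborate than necessary---one can work directly with a minimal non-simplicial cone and take $p$ to be the point with $p_\rho=1$ for $\rho\in\sigma(1)$ with $a_\rho\le 0$ and $p_\rho=0$ otherwise, landing in the fibre over $x_\sigma$; then the limit along your $\lambda$ has strictly more zero coordinates. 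Your remark that the geometric-quotient criterion over $\spec\Z$ requires care is apt; base-changing to geometric points, as you suggest, is the cleanest way to import the classical statement.
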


By a categorical quotient, we mean that the map $X_{\widetilde{\Sigma}}\to X_\Sigma$ is
universal for $G_\Sigma$-equivariant maps $X_{\widetilde{\Sigma}}\to Y$, where $G_\Sigma$ acts
trivially on $Y$.
A fan $\Sigma$ is simplicial if the rays in each cone of $\Sigma$ are linearly
independent.

When $\Sigma$ does not span a full rank sublattice of $N$, we replace
$X_{\widetilde{\Sigma}}$ in Theorem~\ref{TH:Cox} by
$X_{\widetilde{\Sigma}}\times \T_{N/N'}$, where $N/N'$ comes from the exact
sequence~\eqref{Eq:SES}.

%
%
\subsection{Non-abelian cohomology}

Let $G$ be a finite group, and $A$ a group on which it acts.
If we write $\DeCo{^ga}$ for the image of $a\in A$ under $g\in G$, then
${^g(a\cdot b)}={^ga}\cdot{^gb}$.
Write $H^0(G,A)$ for the invariants, $A^G$.
A \demph{cocycle $\bc$} of $G$ in $A$ is a map $g\mapsto c_g$ of $G$ into $A$ such that
 \begin{equation}\label{Eq:cocycle}
   c_{gh}\ =\ c_g\cdot{^gc_h}\,.
 \end{equation}
This implies that $c_e=1$, where $e\in G$ and $1\in A$ are the identity elements.
Indeed, the cocycle condition~\eqref{Eq:cocycle} implies that
$c_e=c_{e^2}=c_e\cdot{^e}c_e=(c_e)^2$.

Two cocyles $\bc$ and $\bc'$ are \DeCo{{\sl cohomologous}} if there exists $b\in A$
such that $c'_g=b^{-1}\cdot c_g\cdot {^gb}$ for all $g\in G$.
This is an equivalence relation on cocycles and we write \DeCo{$H^1(G,A)$} for the set of
equivalence classes.
This \DeCo{{\sl first cohomology of $G$ {with values} in $A$}} is a pointed set having 
distinguished element the class of the unit cocycle $\DeCo{\One}$, where $\One_g:=1$, for all
$g\in G$.

When $G$ acts trivially on $A$, a cocycle is simply a group homomorphism and $H^1(G,A)$ is the
set of conjugacy classes of homomorphisms.

These cohomology sets are functorial in both $G$ and $A$, and they fit into an exact cohomology
sequence as follows.
The \DeCo{{\sl kernel}} of a map $f\colon X\to Y$ of pointed sets is $f^{-1}(y)$, where
$y$ is the distinguished element of $Y$.
Suppose $G$ acts on a group $B$, preserving a normal subgroup $A$.
Set $C=B/A$.
Then we have the sequence of pointed sets
 \begin{equation}\label{Eq:exact_sequence}
  1 \to H^0(G,A) \to H^0(G,B) \to H^0(G,C)
   \xrightarrow{\,\delta\,}
  H^1(G,A) \to H^1(G,B) \to H^1(G,C)
 \end{equation}
which is exact in that for each cohomology set, the image of the incoming map is the kernel of
the outgoing map.
The connecting homomorphism $\delta$ is defined as follows.
If $c\in H^0(G,C)=C^G$, then we choose $b\in B$ with $c=bA$.
Since $c\in C^G$, if $g\in G$, then $\DeCo{c_g}:=b^{-1}\cdot {^gb}\in A$, and this defines a
cocycle of $G$ in $A$.

When $A$ is abelian, $H^1(G,A)$ is the usual group cohomology, and the exact
sequence~\eqref{Eq:exact_sequence} may be continued with a connecting homomorphism
$\delta\colon H^1(G,C) \to H^2(G,A)$.

We will freely use the following fundamental result in group cohomology.\medskip

\begin{Shapiro}[{\cite[I.2.5]{Serre}}]
Let $H$ be a subgroup of $G$ of finite index and $A$ an abelian group on which $H$ acts.
 Then, for any $i$,
\[
   H^i(G,\Ind^G_H A)\ =\ H^i(H,A)\,.
\]
\end{Shapiro}

We  use the following property of $\Z/2\Z$ group cohomology.
Let $A$ be an abelian group on which $\Z/2\Z$ acts.
Let $\xi \cong \Z$ be the alternating $\Z[\Z/2\Z]$-module.
Then for any $n \geq 1$,
 \begin{equation}\label{Eqn:periodicity}
  H^n(\Z/2\Z , A)\ =\ H^{n+1}(\Z/2\Z , \xi \otimes A) \, .
 \end{equation}
Indeed, writing $\Z/2\Z = \{e,g\}$, we have the
free resolution~\cite[\S~2.1]{Evens} of the trivial module,
 \begin{equation}\label{Eq:Free}
  \dotsb\ \xrightarrow{e-g}\ \Z[\Z/2\Z]\ \xrightarrow{e+g}\ \Z[\Z/2\Z]\
     \xrightarrow{e-g}\ \Z[\Z/2\Z]\ \to\ \Z\ \to\ 0\, ,
 \end{equation}
and tensoring with  $\xi$ interchanges $e+g$ and $e-g$, shifting this sequence
one position. 
Applying 
$\Hom(\underline{\ },\xi\otimes A)\simeq \Hom(\underline{\ }\otimes\xi,A)$
to~\eqref{Eq:Free}
gives~\eqref{Eqn:periodicity}.

%
%
\subsection{Galois cohomology and \texorpdfstring{$k$}{k}-forms of a variety}\label{S:GalCoh}

Let $K$ be a finite Galois extension of a field $k$ with Galois group $\calG$.
For a $\calG$-group $A$, write $H^1(K/k,A)$ for the cohomology set $H^1(\calG,A)$,
the \DeCo{{\sl Galois cohomology set}} of $K/k$ with coefficients in $A$.

Suppose that $X$ and $X'$ are varieties over $k$ which become isomorphic over $K$,
$X_K \simeq  X'_K$.
We say that $X'$ is a \DeCo{{\sl $k$-form}} of the variety $X_K$.
Write \DeCo{$E(K/k,X)$} for the set of isomorphism classes of $k$-forms of $X_K$.
Under {suitable descent} assumptions,
this is in natural bijection with the Galois cohomology set {$H^1(K/k,\Aut_K(X_K))$}
with coefficients in the group \DeCo{$\Aut_K(X_K)$} of $K$-automorphisms of $X_K$.

Indeed, there is a straightforward construction (see~\cite[III.1]{Serre}) of a map
 \begin{equation}\label{Eq:theta}
   \DeCo{\theta}\ \colon\ E(K/k,X)\ \longrightarrow\ H^1(K/k,\Aut_K(X_K))\,,
 \end{equation}
with the following property.

\begin{proposition}\label{P:GD}
  The map $\theta$ is injective.
  It is bijective if $X_K$ is quasiprojective.
\end{proposition}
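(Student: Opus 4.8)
The plan is to prove injectivity first, and then surjectivity under the quasiprojectivity hypothesis, following the standard Galois-descent argument.

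\textbf{Injectivity of $\theta$.} Suppose $X'$ and $X''$ are two $k$-forms of $X_K$ whose associated cocycles $\bc$ and $\bc'$ are cohomologous. Choose $K$-isomorphisms $\varphi\colon X_K \to X'_K$ and $\varphi'\colon X_K \to X''_K$ defining $\bc$ and $\bc'$ respectively, and let $b\in\Aut_K(X_K)$ satisfy $c'_g = b^{-1}\cdot c_g\cdot {^gb}$ for all $g\in\calG$. Replacing $\varphi'$ by $\varphi'\cdot b$, which changes its cocycle within its cohomology class exactly by this formula, I may assume $\bc = \bc'$ on the nose. Then I would check that the composite $K$-isomorphism $\psi := \varphi'\circ\varphi^{-1}\colon X'_K \to X''_K$ is $\calG$-equivariant: for $g\in\calG$, one computes ${^g\psi} = {^g\varphi'}\circ({^g\varphi})^{-1} = (\varphi'\cdot c_g)\circ(\varphi\cdot c_g)^{-1} = \varphi'\circ\varphi^{-1} = \psi$, using $c_g = c'_g$. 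A $\calG$-equivariant $K$-isomorphism between $X'_K$ and $X''_K$ descends to a $k$-isomorphism $X'\cong X''$ (this is the easy direction of descent: morphisms of $K$-schemes fixed by $\calG$ come from morphisms of $k$-schemes, which follows from faithfully flat descent of morphisms, or concretely from $(K\otimes_k R)^{\calG} = R$ on affine patches). Hence $X'\cong X''$ over $k$, and $\theta$ is injective.

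\textbf{Surjectivity when $X_K$ is quasiprojective.} Given a cocycle $\bc\colon\calG\to\Aut_K(X_K)$, I must produce a $k$-form $X'$ with $\theta(X') = [\bc]$. The idea is to twist the natural $\calG$-action on $X_K$ by $\bc$: define a new semilinear action of $\calG$ on $X_K$ by $g\ast x := c_g({^gx})$, where $^gx$ is the original action; the cocycle identity $c_{gh} = c_g\cdot{^gc_h}$ is exactly what makes $\ast$ a group action. If the quotient $X_K/\!\!\ast\,\calG$ exists as a $k$-scheme $X'$ with $X'_K \cong X_K$ compatibly, then chasing definitions shows the descent isomorphism realizes $\bc$ as the cocycle of $X'$. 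The key point — and the main obstacle — is the existence of this quotient as a scheme over $k$. This is where quasiprojectivity enters: one reduces to the affine case by a standard argument using that any finite set of points of $X_K$ lying in a single $\calG$-orbit is contained in a $\calG$-stable affine open (possible for quasiprojective $X_K$ since finitely many points lie in a common affine open, and one can intersect the $\calG$-translates, using that the twisted action still permutes the points of $X_K$). On each such affine piece $\spec A$, the twisted $\calG$-action is a semilinear action on the $K$-algebra $A$, and Galois descent for algebras (Speiser's theorem / faithfully flat descent, \cite[§1]{Serre}) produces a $k$-algebra $A_0$ with $A_0\otimes_k K \cong A$ carrying back the given action. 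These affine descended pieces glue along their overlaps — the gluing data descends by the same principle — to give the $k$-scheme $X'$.

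\textbf{Main obstacle.} The routine parts are the cocycle bookkeeping; the real content is the passage from the semilinear $\calG$-action on $X_K$ to an actual $k$-scheme, i.e. effectivity of descent data. I would handle this by invoking Galois descent for quasi-projective schemes directly (as in \cite[III.1]{Serre}), remarking that the descent datum built from a cocycle is automatically a morphism of schemes and is "effective" precisely because every $\calG$-orbit lies in an affine open, reducing to faithfully flat descent of affine schemes along $\spec K \to \spec k$. It is worth noting for the reader that without quasiprojectivity the twisted object is still a well-defined "twisted form" (an algebraic space, or a scheme once one knows orbits lie in affines), which is exactly the descent condition flagged earlier in the introduction; the proposition only asserts bijectivity under the hypothesis that guarantees this.
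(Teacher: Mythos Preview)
Your argument is correct and is the standard Galois-descent proof. The paper itself does not give a proof of this proposition but simply cites Serre~\cite[III.\S1, Proposition~5]{Serre}; your sketch is precisely the argument found there, so the approaches coincide.
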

A proof of this proposition  is given
in~\cite[III.\S1, Proposition~5]{Serre}. 
The quasiprojective hypothesis is sufficient, but not necessary for surjectivity. 
We discuss this further.

A \DeCo{{\sl twisted action}} of $\calG$ on the $K$-variety $X_K$ (or \DeCo{{\sl twisted form}} of $X_K$)
is a group homomorphism $\rho\colon \calG\to\Aut_k(X_K)$ that covers the action
of $\calG$ on $\spec K$.
That is, for every $g\in \calG$, the diagram
\[
 \xymatrix{
    X_K\ar[d] \ar[r]^{\rho(g)} & X_K \ar[d]\\
    \spec K\ar[r]^{g}&\spec K
 }
\]
commutes
(where $g : \spec K \to \spec K$ is given by $g^{-1} : K \to K$).
In fact, the construction of the map $\theta$~\eqref{Eq:theta} makes sense for twisted
forms of $X_K$ and gives a bijection between the Galois cohomology set
$H^1(K/k,\Aut_K(X_K))$ and 
the set of isomorphism classes of twisted forms of $X_K$.
To see this, let $\bc$ be a cocycle of $\calG$ in $\Aut_K(X_K)$.
This leads to a twisted action of $\calG$ on $X_K$:
let $g\in \calG$ act on $X_K$ by $c_g\cdot g$.
Then
\[
   (c_g\cdot g)\cdot(c_h\cdot h)\ =\
   c_g \cdot g \cdot c_h \cdot g^{-1}\cdot g\cdot h
  \ =\  c_g\cdot {^gc_h}\cdot gh\ =\
   c_{gh}\cdot gh\,,
\]
which shows that this defines a $K$-linear action of $\calG$ on $X_K$.

When $Y$ is a $k$-form of $X_K$, the $\calG$-action on $Y_K$ makes it a twisted form of
$X_K$. 
A twisted form of $X_K$ lies in the image of $\theta$~\eqref{Eq:theta}
if and only if it arises from such a $k$-form $Y$,
that is if and only if it \DeCo{{\sl descends}} to a variety $Y$ over $k$.
Weil's notion of Galois descent, or more generally Grothendieck's faithfully
flat descent explains when this occurs.

\begin{proposition}\label{P:GalDes}
  A twisted form $X$ of $X_K$ descends to a variety $Y$ over $k$ if and only if every
  $\calG$-orbit in $X$ is contained in an affine open subset of $X_K$.
\end{proposition}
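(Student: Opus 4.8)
The plan is to prove this as an instance of Grothendieck's faithfully flat descent, where the descent datum is supplied by the twisted $\calG$-action and the only obstruction to effectivity is the need to cover each orbit by an affine open. First I would set up the descent situation carefully: a twisted form $X'_K$ with its twisted $\calG$-action corresponds, via the isomorphism $\spec K \otimes_k K \simeq \prod_{g \in \calG} \spec K$, to a genuine descent datum for the fppf (in fact étale) cover $\spec K \to \spec k$, as is standard (see \cite[Ch.~III]{Serre} or the Stacks Project on Galois descent). The point is that $K/k$ is finite Galois, hence faithfully flat and quasi-compact, so the only thing standing between a descent datum and an actual $k$-scheme $Y$ with $Y_K \simeq X'_K$ is that descent along $\spec K \to \spec k$ is only known to be effective for \emph{quasi-affine}, or more generally for schemes that can be covered by $\calG$-stable quasi-affine (equivalently quasi-projective, since $k$ is a field) opens.

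Next I would carry out the two implications. For the "only if" direction: if $X'_K$ descends to $Y$ over $k$, then any point $y \in Y$ has an affine open neighborhood $U \subseteq Y$, and $U_K \subseteq Y_K \simeq X'_K$ is then a $\calG$-stable (because defined over $k$) affine open containing the entire $\calG$-orbit of any point lying over $y$; since every $\calG$-orbit in $X'_K$ maps into a single point of $Y$, every $\calG$-orbit is contained in such a $U_K$. For the "if" direction, which is the substantive half: assume every $\calG$-orbit in $X'_K$ lies in an affine open. I would first upgrade this to a \emph{$\calG$-stable} affine open cover: given an orbit $\calG \cdot x$ contained in an affine open $W$, the intersection $\bigcap_{g \in \calG} g(W)$ is still an open set containing $\calG \cdot x$ (a finite intersection), it is $\calG$-stable, and it is quasi-affine; one then shrinks within it around the orbit to a $\calG$-stable \emph{affine} open — here one uses that $X'_K$, being a $K$-form of a toric variety, is separated and of finite type, and that on a quasi-affine scheme one can separate a finite set of points lying in a common affine open (this is where the argument would lean on the toric/quasiprojective nature if needed, or more cheaply on a lemma that a finite set of points of a scheme lying in one affine open lies in one affine open, then take the $\calG$-translate-intersection of that). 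On each such $\calG$-stable affine open $\spec A$, the twisted action is just a semilinear $\calG$-action on the $K$-algebra $A$, so by the affine case of Galois descent $A^{\calG}$ is a $k$-algebra with $A^{\calG} \otimes_k K \simeq A$ equivariantly; these affine quotients glue along their (again $\calG$-stable affine) overlaps to produce $Y$ over $k$ with $Y_K \simeq X'_K$.

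The main obstacle I anticipate is precisely the reduction to a $\calG$-stable \emph{affine} (not merely quasi-affine or quasi-projective) open cover: intersecting the translates of an affine open gives something $\calG$-stable but only quasi-affine, and one must genuinely argue that a $\calG$-orbit sitting inside a quasi-affine open of a nice scheme can be enclosed in a $\calG$-stable affine open. The cleanest route is to invoke the standard fact that a finite set of points on a scheme that is contained in some affine open is contained in some affine open (true for any scheme), apply it to the orbit inside $\bigcap_g g(W)$, and then replace the resulting affine open $W'$ by $\bigcap_g g(W')$ — iterating or arguing that this is already affine because it is a closed subscheme of $\prod$ of affines, using separatedness. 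Once the $\calG$-stable affine cover exists, the rest is the routine affine Galois descent of \cite{Serre} together with gluing, and one should remark that when $X_K$ is quasiprojective — or when $\Sigma$ is a quasiprojective fan, or when $[K:k]=2$ by W\l odarczyk's theorem — the hypothesis is automatically satisfied, recovering Proposition~\ref{P:GD}.
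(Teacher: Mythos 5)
The paper offers no proof of this proposition --- it simply cites \cite[\S6.2]{BLR90} --- so there is nothing internal to compare against; your reconstruction is the standard argument and it is correct. The ``only if'' direction (pull back an affine neighborhood of the image point under $Y_K\to Y$, noting that this projection is $\calG$-invariant) and the ``if'' direction (produce a $\calG$-stable affine cover, apply affine Galois descent on each piece and on the overlaps, then glue) are both sound. The one thing worth flagging is that the ``main obstacle'' you anticipate is not actually an obstacle. Because $X_K$ is separated, a finite intersection of affine open subschemes is itself affine: $U\cap V$ is the preimage of the affine $U\times V$ under the diagonal, which is a closed immersion. Hence for an orbit $\calG\cdot x$ contained in an affine open $W$, the set $\bigcap_{g\in\calG} g(W)$ is already a $\calG$-stable \emph{affine} open containing the orbit, and the entire detour --- labelling it merely quasi-affine, invoking a ``finite set of points lying in one affine open lies in one affine open'' lemma (which, as you state it, is a tautology), and then iterating --- is unnecessary; your closing parenthetical about a closed subscheme of a product of affines via separatedness is the whole argument and should be promoted to the main line. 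Alternatively, one can dispense with affineness altogether by invoking effectivity of descent for quasi-affine schemes, which is essentially how the cited reference proceeds; either route completes the proof.
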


A proof may be found in~\cite[\S6.2]{BLR90}.
Proposition~\ref{P:GalDes} gives necessary and sufficient conditions for Galois cohomology
to classify $k$-forms of a given variety $X_K$.

%
%
\subsection{Norm homomorphisms}\label{section: norm brauer}
For each subfield $L\subset K$, let \DeCo{$\brauer{L|K}$} be the set of similarity classes of
central simple $L$-algebras that split over $K$.
These partial Brauer groups assemble into a directed system to define the Brauer group 
$\brauer{L} = \lim_{K \subset \overline{L}} \brauer{L|K}$.
When $K$ is a Galois extension of $L$, 
$\brauer{L|K}=H^2(\Gal(K/L),\units{K})$ \cite[Thm.~6.3.4]{NSW}.

Let $\calG = \Gal(K/k)$.
The \DeCo{{\sl norm}} homomorphism 
$ 
  \scrN_T\ \colon\ T(K)\ \longrightarrow T(k)
$ 
for an abelian algebraic $k$-group $T$ is defined by 
$ 
  \lambda\  \longmapsto\ \prod_{g\in \calG}\ {\rule{0pt}{11pt}^g}\!\lambda\,.
$ 
Hence,  the usual norm homomorphism $N_{K/k}$ coincides with $\scrN_{\G_m}$.
In the case where $\calG$ is cyclic, a standard computation 
(for example,~\cite[Thm.~6.2.2]{We}) shows that
 \begin{equation}\label{Eq:H^2}
   H^2(K/k, T(K) )\ =\ T(k)/\Img \scrN_T\; \, .
 \end{equation}
For $\C/\R$ the norm becomes $z\mapsto|z|^2$, so
$\Img \scrN_{\units{\C}} = \Rpos$, the positive real numbers.
Therefore
\[
  \brauer{\R}\ =\ \brauer{\R | \C}\ =\ \units{\R} / \scrN(\units{\C})\ =\ 
 \units{\R} / \Rpos\ \cong\ \{-1 , 1 \} .
\]

Perhaps the most fundamental result in Galois cohomology is due to Hilbert.

\begin{Hilb90}
Let $K/k$ be a finite Galois extension of fields.
Then \[ H^1(\Gal(K/k) , \units{K}) = 1 . \]
\end{Hilb90}

When $K/k$ is a finite cyclic extension, this is equivalent to the following statement:
if $a \in \units{K}$ has unit norm, $N_{K/k}(a)=1$, then there exists 
$b \in \units{K}$ with $a = b / {^\rho b}$,
where $\rho$ is a generator of $\Gal(K/k)$.
 To see the equivalence of the two statements, 
Let $\rho\in\Gal(K/k)$ be a generator of order $d$ and observe that by~\eqref{Eq:cocycle} 
 a cocycle $\bc\colon \Gal(K/k)\to\units{K}$ is determined by its value $c_\rho$ at $\rho$.
 Not every element of $\units{K}$ may be a value $c_\rho$, for
\[
  1\ =\ c_{\rho^d}\ =\ c_\rho\cdot {^\rho}c_\rho\;\dotsb\; {^{\rho^{d-1}}}\!c_\rho
  \ =\ N_{K/k}(c_\rho)\,.
\]
 Now observe that 
 $c_\rho=b/{^\rho}b$
 if and only if $b^{-1}\cdot c_\rho\cdot {^\rho}b=1$, so that $\bc$ is
 cohomologous to $\One$.

\subsection{Arithmetic tori}
Let $K$ be a Galois extension of a field $k$ and $N$ be a finitely generated free abelian
group. 
A \DeCo{{\sl torus over $k$ of rank $n$}} is an algebraic group $\scrT$ over $k$ such that
for some (finite) extension $K / k$ and lattice $N$ of rank $n$,
$\scrT_K \cong \T_{N,K}$.
That is, $\scrT$ is a $k$-form of the split torus $\T_{N,K}$.
As $\T_N$ is affine, the set of such $k$-forms is in natural bijection with the
Galois cohomology set $H^1(K/k,\Aut(\, \T_N) )$, by Proposition~\ref{P:GD}.

We describe the twisted action of the Galois group $\calG$ explicitly.
For $a\in\Aut(N)$ we will also write $a$ for its adjoint in $\Aut(M)$.
Let $\varphi\colon\calG\to\Aut(\, \T_N)=\Aut(N)$.
Given $g \in \calG$ and $t\in\T_N(K)=\Hom (M,\units{K})$, define
$\DeCo{{^{g_\varphi} t}} \colon M \to \units{K}$  by the composition
 \begin{equation}
  \label{eq:varphi}
  {^{g_\varphi} t} \ \colon\ M\ \xrightarrow{\ \varphi_g\ }\
                             M\ \xrightarrow{\  t \ }\  \units{K} \
                                 \xrightarrow{\ g\ }\ \units{K} \, .
 \end{equation}
Since the Galois group $\calG$ acts trivially on $\Aut(N)$, we have the following classification.

\begin{proposition}\label{P:tori}
 The $k$-forms of the torus $\T_{N,K}$ are given by conjugacy classes of homomorphisms
 $\varphi\colon \calG\to \Aut(N)$.
 The corresponding torus ${_\varphi\scrT}$ satisfies
 ${_\varphi\scrT}(K)=\T_N(K)$, but the Galois action of
 $g\in \calG$ sends $t \colon M \to \units{K}$ in $\T_N(K)$ to
 ${^{g_\varphi} t} := g\circ t \circ \varphi_g \colon M \to \units{K}$.
\end{proposition}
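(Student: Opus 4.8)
The plan is to apply the general $k$-form correspondence (Proposition~\ref{P:GD}) to $X = \T_N$ over $K$, and then unwind what the resulting cocycle data says concretely. First I would note that $\T_N$ is affine, hence quasiprojective, so Proposition~\ref{P:GD} gives a bijection between the set of $k$-forms of $\T_{N,K}$ and the Galois cohomology set $H^1(\calG, \Aut_K(\T_{N,K}))$. The key computation of the coefficient group is the identification $\Aut_K(\T_{N,K}) = \Aut(N)$: since $N = \Hom(\G_m, \T_N)$ is recovered functorially from the torus, every $K$-automorphism of $\T_{N,K}$ is induced by a unique automorphism of the lattice $N$, as already recalled in the excerpt. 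Crucially, the Galois group $\calG$ acts \emph{trivially} on $\Aut(N)$ (the action $^g\alpha = g\alpha g^{-1}$ on $\Aut_K(\T_{N,K})$ becomes trivial because the induced lattice automorphism is unchanged by the Galois twist, $N$ being defined over the prime field). Therefore a cocycle $\calG \to \Aut(N)$ is exactly a group homomorphism, and two such are cohomologous precisely when they differ by conjugation by an element $b \in \Aut(N)$, i.e.\ are conjugate homomorphisms. This yields the first sentence: $k$-forms of $\T_N$ are conjugacy classes of homomorphisms $\varphi\colon \calG \to \Aut(N)$.

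Next I would pin down the twisted form $\scrT_\varphi$ attached to a homomorphism $\varphi$ explicitly, as a $K$-variety with a twisted $\calG$-action, using the construction of $\theta^{-1}$ described in the excerpt (a cocycle $\bc$ gives the twisted action in which $g$ acts by $c_g \cdot g$). Here $c_g = \varphi_g \in \Aut(N)$ viewed as an automorphism of $\T_{N,K}$. On $K$-points $\T_N(K) = \Hom(M, \units K)$, the automorphism of $\T_{N,K}$ induced by $\varphi_g \in \Aut(N)$ acts by precomposition with the adjoint $\varphi_g$ on $M$, and the ambient Galois element $g$ acts by postcomposition with $g\colon \units K \to \units K$. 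Composing, $g$ acts on $t\colon M \to \units K$ by $t \mapsto g \circ t \circ \varphi_g$, which is exactly the formula $^{g_\varphi}t$ of~\eqref{eq:varphi}. Since the descent of this twisted action produces $\scrT_\varphi$ with $\scrT_\varphi(K) = \T_N(K)$ as a set (only the Galois action changes), this establishes the second and third sentences of the proposition.

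I expect the main obstacle to be the careful verification that the $\calG$-action on the coefficient group $\Aut_K(\T_{N,K})$ is trivial, since this is precisely what collapses cocycles to homomorphisms and is the crux of the classification; it rests on the fact that $\T_N$ (and hence $N$ and its automorphism group) is defined over $\Z$, so that conjugating a lattice automorphism by a Galois element of $K$ does nothing. The remaining work — matching the abstract cocycle-to-twisted-action recipe with the explicit composition formula~\eqref{eq:varphi} — is a routine unwinding of definitions, essentially a bookkeeping check that precomposition on $M$ and postcomposition on $\units K$ commute and assemble as claimed. No genuinely new idea beyond Proposition~\ref{P:GD} and the functoriality $N = \Hom(\G_m, \T_N)$ is needed.
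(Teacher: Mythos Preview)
Your proposal is correct and follows essentially the same approach as the paper. The paper actually gives the argument in compressed form in the paragraph \emph{preceding} the proposition rather than as a formal proof afterward: it notes that $\T_N$ is affine so Proposition~\ref{P:GD} applies, identifies $\Aut(\T_N)=\Aut(N)$, remarks that $\calG$ acts trivially on $\Aut(N)$ (so cocycles are homomorphisms up to conjugacy), and writes down the twisted action~\eqref{eq:varphi}; your proposal simply unpacks each of these steps with the care the paper omits.
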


Equivalently, $\T_N(K)=N\otimes_\Z\units{K}$, and the twisted $\calG$-action is simply the
diagonal $\Z[\calG]$-action where $\varphi$ gives $N$ the structure of a $\Z[\calG]$-module.

\section{Arithmetic toric varieties}\label{S:Theorem}

An \DeCo{{\sl arithmetic toric variety}} over a field $k$ is a pair $(Y,\scrT)$, where $\scrT$
is a torus over $k$ and $Y$ is a normal variety over $k$ equipped with a faithful action of
$\scrT$ which has a dense orbit.
Let $K$ be a Galois extension of $k$ over which the torus $\scrT$ splits, so that
$\scrT_K\simeq\T_{N,K}$, where $N$  is the lattice of one-parameter subgroups of $\scrT$. 
According to Proposition~\ref{P:tori}, there is a conjugacy class of group homomorphisms
 \begin{equation}\label{Eq:map}
  \varphi\ \colon\ \DeCo{\calG}\ :=\ \Gal(K/k)\ \longrightarrow\ \Aut(N)
 \end{equation}
such that $\scrT={_\varphi\scrT}$.
Then $Y_K$ is a normal variety over $K$ that is equipped with a faithful action of the split
torus $\scrT_K$ which has a dense orbit and  thus $Y_K$ is isomorphic to a
split toric variety $X_{\Sigma,K}$, for some fan $\Sigma\subset N$.

Thus we have an isomorphism of pairs
\begin{equation}\label{Eq:IsoPairs}
   \psi\ \colon\ (Y_K,\scrT_K)\ \xrightarrow{\ \sim\ }\
      ( X_{\Sigma,K},{\T}_{N,K} )\ .
\end{equation}
We may use this to transfer the $\calG$-action from $(Y_K,\scrT_K)$ to
 $(X_{\Sigma,K}, \T_{N,K})$ to obtain a twisted form of
 $(X_{\Sigma,K}, \T_{N,K})$.
Since $\calG$ acts on the pair $(X_{\Sigma,K}, \T_{N,K})$, it acts on the fan
$\Sigma\subset N$, and thus the homomorphism $\varphi$~\eqref{Eq:map} for which
$\scrT={_\varphi\scrT}$ may be chosen so that $\varphi(\calG)\subset\Aut_\Sigma$.
For $g\in \calG$, define $\DeCo{t_g}\in\T_N(K)$ by
\[
   {^g x_0} = t_gx_0 \, ,
\]
where $x_0$ is the distinguished point of $X_{\Sigma,K}$ corresponding to $0\in\Sigma$.

\begin{lemma}
  The map $\calG\ni g\mapsto (t_g,\varphi_g)\in {\TAutS(K)}$ is a cocycle in
  $H^1(K/k, \TAutS)$, and the corresponding twisted form
  of $(X_{\Sigma,K}, {\T_{N,K}})$ is induced by the map
  $\psi~\eqref{Eq:IsoPairs}$.
\end{lemma}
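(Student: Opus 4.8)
The plan is to verify the cocycle condition directly and then identify the twisted form. First I would check that $g \mapsto (t_g, \varphi_g)$ is a cocycle. By the previous lemma, $\TAutS(K) = \T(K) \rtimes \Aut_\Sigma$, and the multiplication there satisfies $(t_\alpha, \varphi)(t_\beta, \psi) = (t_\alpha \cdot {}^{\varphi} t_\beta, \varphi\psi)$, matching the composition law $t_{\beta \circ \alpha} = t_\beta \cdot {}^\psi t_\alpha$ derived earlier. Since $\varphi$ is already a homomorphism by construction, the cocycle condition $(t_{gh}, \varphi_{gh}) = (t_g, \varphi_g) \cdot {}^g(t_h, \varphi_h)$ reduces to the identity $t_{gh} = t_g \cdot {}^{g_\varphi} t_h$ in $\T_N(K)$, where the twisted Galois action ${}^{g_\varphi}$ is the one from \eqref{eq:varphi}. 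To see this, apply $g$ to the defining relation ${}^h x_0 = t_h x_0$: on the one hand ${}^{gh} x_0 = t_{gh} x_0$, and on the other hand ${}^g({}^h x_0) = {}^g(t_h x_0) = {}^{g_\varphi}(t_h) \cdot {}^g x_0 = {}^{g_\varphi}(t_h) \cdot t_g x_0$. Here the middle equality is exactly the intertwining property of the transferred $\calG$-action: $g$ acts as a toric automorphism of $(X_{\Sigma,K}, \T_{N,K})$ whose torus component is $\varphi_g$, so $g(s \cdot y) = {}^{g_\varphi}(s) \cdot g(y)$ for $s \in \T_N(K)$, $y \in X_{\Sigma,K}$. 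Since $\T(K)$ acts freely on the dense orbit, $t_{gh} = t_g \cdot {}^{g_\varphi}(t_h)$, which (using that $\T$ is a normal subgroup of $\TAutS$ acted on by $\Aut_\Sigma$) is precisely the cocycle relation in $\TAutS(K)$.

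Next I would identify the twisted form. By the discussion preceding Proposition~\ref{P:GD} and the construction of the map $\theta$~\eqref{Eq:theta}, the twisted form of $(X_{\Sigma,K}, \T_{N,K})$ determined by this cocycle is the one whose $g$-action is $(t_g, \varphi_g) \cdot g$, where $g$ is the ``standard'' Galois action on $X_{\Sigma,K}$ coming from the $\Z$-structure. I need to check this agrees with the action transferred from $(Y_K, \scrT_K)$ via $\psi$. Unwinding the definition of $\theta$: given $\psi \in \Iso_K((X_{\Sigma,K}, \T_{N,K}), (Y_K, \scrT_K))$, the associated cocycle is $c_g$ defined by ${}^g\psi = \psi \cdot c_g$, i.e.\ $c_g = \psi^{-1} \cdot ({}^g\psi)$, and the transferred action of $g$ on $X_{\Sigma,K}$ is $c_g \cdot g_{\mathrm{std}}$. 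So the content is that $c_g = (t_g, \varphi_g)$ as an element of $\TAutS(K)$. This follows from the isomorphism of Lemma~2.2 (the toric automorphism $\leftrightarrow (t_\alpha, \varphi)$ dictionary): $\varphi_g$ is the torus part of $c_g$ because $\psi$ was chosen to carry $\scrT_K$ to $\T_{N,K}$ compatibly with the $\calG$-action, so that the torus-automorphism component of ${}^g\psi \circ \psi^{-1}$ is exactly the automorphism $\varphi_g$ of $N$ for which $\scrT = \scrT_\varphi$; and the point component is $t_g$ since $c_g(x_0) = t_g x_0$ by the very definition of $t_g$ together with the fact that $c_g$ is a toric automorphism sending $x_0$ to its image under the transferred action (which is ${}^g x_0$).

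The main obstacle I anticipate is purely bookkeeping: being careful about which copy of the Galois action is meant at each point — the ``standard'' action on the scheme $X_{\Sigma,K}$ over $\Z$, the transferred action on $X_{\Sigma,K}$ coming from $Y_K$, and the twisted action $(c_g \cdot g_{\mathrm{std}})$ — and making sure the identification ${}^g x_0 = t_g x_0$ uses the transferred action consistently. There is also a mild subtlety in checking that $\varphi$ can genuinely be chosen inside $\Aut_\Sigma$ and that it coincides with the torus-component of $c_g$; this is handled by the remark (just before the statement) that $\calG$ acts on the pair $(X_{\Sigma,K}, \T_{N,K})$ hence on $\Sigma \subset N$, so the conjugacy class of $\varphi$ with $\scrT = \scrT_\varphi$ has a representative landing in $\Aut_\Sigma$, and then uniqueness of $t_\alpha$ (freeness of $\T(K)$ on $\calO_0$) pins down the rest. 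Once the three actions are disentangled, everything reduces to the two displayed identities above, so no heavy computation is needed.
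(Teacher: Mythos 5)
Your proof is correct, and it is exactly the argument the paper intends: the lemma is stated without proof (the authors defer to "the same formalism as in Section 2.5"), and your verification — applying the transferred $g$-action to ${}^h x_0 = t_h x_0$, using the intertwining property $g(s\cdot y) = {}^{g_\varphi}(s)\cdot g(y)$ and freeness of $\T(K)$ on the dense orbit to get $t_{gh} = t_g\cdot{}^{g_\varphi}t_h$, then unwinding $\theta$ to match the cocycle with the action transferred by $\psi$ — is precisely that omitted computation. No gaps.
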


The same formalism as in Section~\ref{S:GalCoh} gives the following
classification.

\begin{theorem}\label{Th:classification}
 Let $K$ be a Galois extension of a field $k$ with Galois group $\calG$ and $\Sigma$ a fan in the
 lattice $N$.
 \begin{enumerate}[\rm (1)]
  \item The Galois cohomology set $H^1(K/k,\TAutS)$ is in natural bijection with the
        set of twisted forms of the split toric variety {$(X_{\Sigma,K},\T_{N,K})$}, with the
        distinguished unit cocycle $\One$ corresponding to {$(X_{\Sigma,K},\T_{N,K})$}.

  \item This Galois cohomology set is in bijection with the set $E(K/k,X_\Sigma)$ of $k$-forms
        of the split toric variety {$(X_{\Sigma,K}, \T_{N,K})$} if and only if for every homomorphism
        $\varphi\colon \calG\to \Aut_\Sigma{(K)}$ and cone $\tau\in\Sigma$, the points
        $\{ x_{\varphi_g({\tau})} \mid g\in \calG\}$
       indexed by cones in the $\calG$-orbit of $\tau$ lie in an affine open subset of
       $X_{\Sigma,K}$.
 \end{enumerate}
\end{theorem}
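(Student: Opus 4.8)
The plan is to deduce both parts from the general descent formalism already assembled in Section~\ref{S:GalCoh}, specialized to the toric automorphism group. For part (1), I would observe that the previous lemma and the discussion preceding it show that the map $\psi$ of~\eqref{Eq:IsoPairs} produces, from an arithmetic toric variety with chosen splitting, a cocycle $g\mapsto(t_g,\varphi_g)$ in $H^1(K/k,\TAutS)$; conversely, a cocycle $\bc\colon\calG\to\TAutS(K)$ twists the $\calG$-action on $X_{\Sigma,K}$ by $g\mapsto c_g\cdot g$, exactly as in the paragraph following Proposition~\ref{P:GalDes}. One checks this is a well-defined $K$-linear twisted action using the cocycle identity, verbatim as in Section~\ref{S:GalCoh}, and that cohomologous cocycles give $\calG$-equivariantly isomorphic twisted pairs (conjugating by the element $b\in\TAutS(K)$ realizing the cohomology). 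The unit cocycle $\One$ manifestly gives back $(X_{\Sigma,K},\T_{N,K})$ with its standard action. So part (1) is the ``twisted forms'' half of Proposition~\ref{P:GD} applied with $\Aut_K(X_K)$ replaced by $\TAutS(K)$; the only extra point over the cited result is that we are twisting the \emph{pair} $(X_{\Sigma,K},\T_{N,K})$, but since $\Aut_\Sigma$ sits inside the data of the pair and $\TAutS=\T\rtimes\Aut_\Sigma$ is precisely the automorphism group of the pair (by the Lemma in \S\ref{S:background} on toric automorphisms), this is automatic.

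For part (2), I would invoke Proposition~\ref{P:GalDes}: a twisted form descends to a $k$-variety if and only if every $\calG$-orbit is contained in an affine open subset. Since $H^1(K/k,\TAutS)$ already classifies twisted forms by part (1), the cohomology set is in bijection with $E(K/k,X_\Sigma)$ precisely when \emph{every} twisted form satisfies this orbit condition. The remaining work is to translate ``every $\calG$-orbit lies in an affine open'' into the stated combinatorial condition on the points $x_{\varphi_g(\tau)}$. The key step is that the affine open $\calG$-stable subsets of $X_{\Sigma,K}$, with respect to the twisted action attached to a cocycle with linear part $\varphi$, are exactly the unions $\bigcup V_{\sigma,K}$ over $\varphi(\calG)$-stable subfans; an orbit-closure point $x_{\varphi_g(\tau)}$ lies in $V_{\sigma,K}$ iff $\varphi_g(\tau)$ is a face of $\sigma$, and the $T(K)$-part $t_g$ of the cocycle does not affect which $V_{\sigma,K}$ contain a given torus-fixed-type point. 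Hence it suffices to check the orbit condition on the distinguished points $x_\tau$ and their $\calG$-translates $x_{\varphi_g(\tau)}$, and by the earlier remark the linear part $\varphi$ can be taken to land in $\Aut_\Sigma$. I would then note that an arbitrary orbit in $X_{\Sigma,K}$ lies in some $\T$-orbit's closure and reduces, via the orbit--cone correspondence, to the case of these distinguished points: if the distinguished points $\{x_{\varphi_g(\tau)}\}$ all lie in a common affine $V_{\sigma,K}$, then so does the whole $\calG$-orbit of any point in $\calO_{\tau,K}$, because $V_{\sigma,K}$ is $\T(K)$-stable and the twisted action of $g$ carries $\calO_{\tau,K}$ into $\calO_{\varphi_g(\tau),K}\subset V_{\sigma,K}$.

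The main obstacle I anticipate is the careful verification that affine $\calG$-stable opens correspond to $\varphi(\calG)$-stable subfans and that it genuinely suffices to test the condition on the finitely many distinguished points $x_{\varphi_g(\tau)}$ rather than on all orbits; this uses that the $V_{\sigma,K}$ form an affine open cover stable under $\T(K)$, that every point of $X_{\Sigma,K}$ specializes into some $V_{\sigma,K}$, and that the twisted $\calG$-action permutes the strata $\calO_{\tau,K}$ according to $\varphi$ while preserving each $V_{\sigma,K}$ whose defining subfan is $\varphi(\calG)$-stable. Once this structural fact is in hand, both directions of the ``if and only if'' in part (2) are formal consequences of Proposition~\ref{P:GalDes} together with part (1). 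I would also remark, for context, that the quasiprojective case (every twisted form descends) and W\l odarczyk's quadratic-extension case recover surjectivity automatically, so the content of part (2) is exactly the combinatorial criterion in the general case.
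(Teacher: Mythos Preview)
Your treatment of part~(1) and of the necessity direction in part~(2) matches the paper's approach. The gap is in the sufficiency direction of part~(2).

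You assume that if the distinguished points $\{x_{\varphi_g(\tau)}\mid g\in\calG\}$ lie in a common affine open, then they lie in a common $V_{\sigma,K}$, i.e.\ that all the cones $\varphi_g(\tau)$ are faces of a single cone $\sigma\in\Sigma$. This is not implied by the hypothesis and is generally false: already for $\P^2$ with a cyclic group of order three permuting the three maximal cones, the three torus-fixed points lie in many affine opens of $\P^2_K$ (it is projective), yet no $V_\sigma$ contains all three, since no cone of the fan has all three maximal cones as faces. Relatedly, your ``key step'' that the $\calG$-stable affine opens are unions $\bigcup V_{\sigma,K}$ over $\varphi(\calG)$-stable subfans is incorrect: the $V_\sigma$'s are exactly the \emph{torus-invariant} affine opens, and a $\calG$-stable affine open need not be torus-invariant.

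The paper's argument does not try to upgrade the given affine open $U$ to a $V_\sigma$. Instead it uses a translation trick: given $x=t_x\,x_\sigma$ and any affine open $U$ containing $\{x_{\varphi_g(\sigma)}\}$, one computes ${}^g x = ({}^g t_x)\,t_g\,x_{\varphi_g(\sigma)}$, so for each $g$ some torus element carries ${}^g x$ into $U$. Since $\T_{N,K}$ is irreducible, the finitely many nonempty open sets $\{t\in\T_N(K)\mid t\cdot{}^g x\in U\}$ have nonempty common intersection; for $t$ in that intersection, $t^{-1}U$ is an affine open containing the entire $\calG$-orbit of $x$. This step, which passes from the combinatorial hypothesis on distinguished points to the descent criterion for arbitrary orbits without assuming $U$ is torus-invariant, is what your proposal is missing.
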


\begin{proof}
  The first statement follows, {\it mutatis mutandis}, from arguments given in
  Section~\ref{S:GalCoh}, and a small calculation involving the unit cocycle.

  The condition in the second statement is necessary.
  Indeed, in Section~\ref{S:partition}, we show that a homomorphism $\varphi\colon\calG\to\Aut_\Sigma$
  gives a cocycle $\One_\varphi\colon g\mapsto (1,\varphi_g)$, and thus a twisted form
  $X_\varphi$ of $X_{\Sigma,K}$.
  Thus the condition that the points  $\{x_{\varphi_g({\tau})} \mid g\in \calG\}$ lie in an
  affine open subset coincides with the condition in
  Proposition~\ref{P:GalDes} for $X_\varphi$ to have descent, but only for orbits of the
  distinguished points $x_\sigma$ for cones $\sigma\in\Sigma$.

  For sufficiency, suppose that we have a twisted form of $X_{\Sigma,K}$
  given by a cocycle
  $ \bc \colon g \mapsto c_g =(t_g,\varphi_g)$ with
  $\varphi\colon \calG\to\Aut_\Sigma$ the corresponding homomorphism,
  and let $x\in X_{\Sigma,K}$.
  We show there exists an affine open subset $V$ of $X_{\Sigma,K}$ containing the
  $\calG$-orbit of $x$.

  To that end, write $x=t_x  x_\sigma$ for some cone $\sigma$ of the fan $\Sigma$.
  Let $U\subset X_{\Sigma,K}$ be an affine open subset containing the
  points $\{x_{\varphi_g({\tau})} \mid g\in \calG\}$ indexed by the $\calG$-orbit of the cone $\sigma$.
  For each $g\in \calG$,
  consider the map $f_g\colon {\T_{N,K}}\to X_{\Sigma,K}$ defined by
\[
   f_g(t)\ =\ t . {^{g} x} \, ,
\]
  where $g$ acts on $x$ via the twisted action of $\calG$ on $X_{\Sigma,K}$.
  We claim that the image meets the set $U$, for every $g\in\calG$.

  If so, then $f_g^{-1}(U)$ is a non-empty open subset $U_g$ of  ${\T_{N,K}}$,
  and dense as $\T_N$ is irreducible.
  It follows that for a point $t$ in the intersection of the sets $U_g, \ g\in \calG$, one has
  $t . {^g x}\in U$ for all $g\in \calG$.
  In other words, the affine open set $t^{-1}U$ contains the orbit $\calG x$.

  To prove the claim, we show that for every $g\in\calG$
  some point of the form $t . {^g x}$ lies in $U$.
  We have
\[
    {^g x}\ =\ {^g (t_x x_\sigma)}\ =\ {^g t_x}{^g x_\sigma}\ =\ 
    {^g t_x} t_g x_{\varphi_g(\sigma)}\,.
\]
Hence for $t = ({^gt_x} t_g)^{-1} \in\T_N(K)$  
we have $t . {^g x} = x_{\varphi_g(\sigma)}$, which lies in $U$, by hypothesis.
  This completes the proof.
\end{proof}

\begin{cor}\label{C:quadratic}
 If $K/k$ is a quadratic extension, the lattice $N$ has rank $2$, or the fan
 $\Sigma$ is quasiprojective, then $H^1(K/k,\TAutS)$ classifies $k$-forms of the split
 toric variety $X_{\Sigma,K}$.
\end{cor}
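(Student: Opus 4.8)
The plan is to deduce each of the three hypotheses from the criterion in Theorem~\ref{Th:classification}(2): in every case it suffices to show that for any homomorphism $\varphi\colon\calG\to\Aut_\Sigma$ and any cone $\tau\in\Sigma$, the finitely many points $\{x_{\varphi_g(\tau)}\mid g\in\calG\}$ all lie in a common affine open subset of $X_{\Sigma,K}$. Since each $x_\sigma$ is the distinguished point of the affine chart $V_{\sigma,K}$, it is enough to find a single cone $\sigma'\in\Sigma$ containing every $\varphi_g(\tau)$ as a face; then $U=V_{\sigma',K}$ works, because $x_{\varphi_g(\tau)}\in V_{\varphi_g(\tau),K}\subset V_{\sigma',K}$. (More generally, any affine open $U$ that is a union of charts closed under passing to faces and containing the relevant charts will do.)

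\emph{Quasiprojective fan.} When $\Sigma$ is quasiprojective, $X_{\Sigma,K}$ is quasiprojective, so by Proposition~\ref{P:GalDes} every $\calG$-orbit — in particular every orbit of distinguished points — lies in an affine open subset; the criterion of Theorem~\ref{Th:classification}(2) is automatic. Alternatively one invokes Proposition~\ref{P:GD} directly: $H^1(K/k,\TAutS)$ classifies twisted forms, and for quasiprojective $X_{\Sigma,K}$ every twisted form descends.

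\emph{Quadratic extension.} Here $\calG=\Z/2\Z=\{e,g\}$, so the orbit of a cone $\tau$ is just $\{\tau,\varphi_g(\tau)\}$, a pair of cones of $\Sigma$. I would appeal to the result of W\l odarczyk cited in the introduction: for $K/k$ quadratic, every twisted form of $X_{\Sigma,K}$ descends to a $k$-variety. Concretely, W\l odarczyk's theorem guarantees that any two cones of a fan lie in a common affine toric open subset (one may always embed $X_\Sigma$ equivariantly so that any two points have a common affine neighborhood), which gives exactly the needed affine open $U$ containing $x_\tau$ and $x_{\varphi_g(\tau)}$. So the criterion holds.

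\emph{Rank $2$.} This is the case requiring a genuine argument. After splitting off a torus factor via~\eqref{Eq:SES} we may assume $\Sigma$ spans $N\cong\Z^2$. A fan in a rank-$2$ lattice is automatically simplicial, its maximal cones are two-dimensional, and — this is the key combinatorial fact — \emph{any} pointed fan in $\Z^2$ is quasiprojective (every complete or incomplete fan in the plane admits a strictly convex support function, since one only needs convexity along the finitely many rays in cyclic order). Hence we are reduced to the quasiprojective case already handled. If one prefers to argue directly from Theorem~\ref{Th:classification}(2): given $\varphi\colon\calG\to\Aut_\Sigma$ and a cone $\tau$, its $\calG$-orbit consists of cones $\varphi_g(\tau)$; if $\tau$ is a ray or $\{0\}$ the rays $\{\varphi_g(\tau)\}$ together span at most a subfan and one checks they are contained in a single two-dimensional cone of $\Sigma$ or, failing that, that $X_{\Sigma,K}$ minus a finite set is still covered by an affine chart — but invoking planar quasiprojectivity is cleaner and avoids case analysis.

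The main obstacle is the rank-$2$ case: one must be careful that when $\Sigma$ is \emph{not} complete (e.g.\ two opposite rays, or a fan whose support is a half-plane) the distinguished points of non-adjacent maximal cones genuinely share an affine neighborhood. The cleanest route is to prove once and for all that every pointed fan in $\Z^2$ is quasiprojective by exhibiting a strictly convex piecewise-linear support function (choosing values on the rays in cyclic angular order so that the function is strictly convex across each cone), and then quote Proposition~\ref{P:GD}; I would present the lemma on planar quasiprojectivity as the heart of the argument and treat the other two cases as immediate citations.
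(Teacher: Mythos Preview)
Your proposal is correct and follows essentially the same approach as the paper: quasiprojective is immediate from Proposition~\ref{P:GD}/\ref{P:GalDes}; the quadratic case cites W{\l}odarczyk's result that any two points of a toric variety lie in a common affine open; and the rank-$2$ case is reduced to the quasiprojective one by the fact that every fan in a rank-$2$ lattice is quasiprojective. The only difference is presentational: you route everything through the distinguished-point criterion of Theorem~\ref{Th:classification}(2), whereas the paper appeals directly to Proposition~\ref{P:GalDes}, which is slightly cleaner (W{\l}odarczyk's theorem handles \emph{all} pairs of points at once, so there is no need to single out the $x_\sigma$).
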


\begin{proof}
 The result for $K/k$ quadratic follows from Proposition~\ref{P:GalDes} and
 W{\l}odarczyk's result~\cite{Wl93} that any pair of points in a toric variety is
 contained in an affine open subset. 
 Since any fan in a rank $2$ lattice is quasiprojective, the rest of the statement
 follows by Proposition~\ref{P:GalDes}.
\end{proof}

Huruguen~\cite{Hur} gives an example of a three-dimensional toric variety and a degree three
field extension which does not satisfy descent.
This shows that this result (Corollary~\ref{C:quadratic}, also obtained by Huruguen) is best possible.

While we have considered twisted forms of toric varieties associated to a fan in a given
finite extension of $k$, the twisted forms from the algebraic closure of $k$ are also given by
the Galois cohomology groups 
\[
   H^1(k\, ,\TAutS)\ :=\ \colim_{K \subset \overline{k}}\ H^1(\Gal(K/k) , \TAutS(K)) \, .
\]

\subsection{Real forms of \texorpdfstring{$\P^1$}{P1}}\label{S:realP1}

 Consider the projective line when $k=\R$.
 Here, $N=\Z$ and the fan $\Sigma$ has three cones:
 the positive integers $\sigma_+$, the negative integers $\sigma_-$, and
 their intersection $\{0\}$.
 Identify $M$ with $\Z$ via the pairing $\langle u,v\rangle=uv$, where $u\in M$ and
 $v\in N$.
 Then $\sigma_\pm^\vee=\sigma_\pm$ and $\{0\}^\vee=\Z$.
 Writing an element $u\in M$ multiplicatively as $z^u$, we have
\[
   \Z[\sigma_+^\vee]\ =\ \Z[z]\,,\qquad
   \Z[\sigma_-^\vee]\ =\ \Z[z^{-1}]\,,\qquad\mbox{and}\qquad
   \Z[\{0\}^\vee]\ =\ \Z[z,z^{-1}]\,,
\]
 which gives the usual construction of $\P^1$ by gluing two copies of the affine line
 $\A^1_+$ and $\A^1_-$ along the common torus $\G_m$ where $x\in\G_m$ is mapped to
 $x\in\A^1_+$ and to $x^{-1}\in\A^1_-$.

 Over $\C$, this gives the familiar Riemann sphere
\[
 \begin{picture}(70,90)
    \put(0,10){\includegraphics[height=70pt]{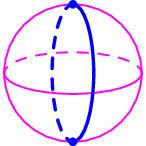}}
    \put(29.5,83){$\infty$}
    \put(32,0){$0$}
   \end{picture}
\]
 where $\G_m(\C)=\units{\C}$ is the complement of the poles $\{0,\infty\}$, which are the
 origins of $\A^1_+(\C)$ and $\A^1_-(\C)$, respectively.
 A twisted form of $\P^1_\C$ is given by an anti-holomorphic involution that normalizes the
 action of the torus $\units{\C}$.
 By either Corollary~\ref{C:quadratic} (as $k=\R$) or Theorem~\ref{Th:classification}(1) (as
 $\P^1$ is projective), twisted forms of $\P^1$ are equivalent to real forms, and both are
 in natural bijection with the Galois cohomology set $H^1(\C/\R,\TAutS)$.

 For $\P^1$, $\Aut_\Sigma=\{\pm I\}$, where $I$ is the identity map on $\Z$ and $-I$ is
 multiplication by $-1$.
 The toric automorphism group of $\P^1_\C$ is
 $\TAutS(\C):= \units{\C} \rtimes\{\pm I\}$, where $\{\pm I\}$ acts on $\units{\C}$ by
 $-I$ sending $t\in \units{\C}$ to $t^{-1}$.
 The Galois group $\calG:=\Gal(\C/\R)$ is $\{e,g\}$, where $e$ is the identity and $g$ is complex
 conjugation.

 For any cocycle $\bc$, $c_e=(1,I)$,
 so a cocycle $\bc$ is determined by $c_g=(\lambda,\varphi)\in\C^*\rtimes\{\pm I\}$.
 Suppose that $\varphi=I$.
 By the cocycle condition~\eqref{Eq:cocycle},
\[
   (1,I)\ =\ c_e\ =\ c_{g^2}\ =\ c_g\cdot {^g}c_g\ =\
   (\lambda,I)\cdot{^g}(\lambda,I)\ =\
   (\lambda\overline{\lambda},I)\,.
\]
 Thus $\lambda\overline{\lambda}=1$ and so $\lambda\in S^1$.
 Let $b^2=\lambda$.
 Then the cocycle given by
\[
  (b^{-1},I)\cdot(\lambda,I)\cdot {^g}(b,I)\ =\
  (b^{-1}\overline{b}\lambda,I)\ =\ (b^{-2}\lambda,I)\ =\ (1,I)
\]
 is cohomologous to $\bc$.
 Thus the unit cocycle $\One$ is the unique element in the Galois cohomology set
 $H^1(\C/\R,\TAutS)$ with $\varphi_g=I$.
 The corresponding twisted form is $\P^1_\C$ with the usual complex conjugation, which is
 reflection in the plane of the Greenwich meridian.
 The fixed points of this involution are the real-valued points of $\P^1_\R$ and they include
 the two poles.

 Suppose now that $\varphi=-I$.
 By the cocycle condition~\eqref{Eq:cocycle},
\[
  (1,I)\ =\  c_g\cdot {^g}c_g\ =\ (\lambda,-I)\cdot{^g}(\lambda,-I)\ =\
  (\lambda \overline{\lambda}^{-1}, I)\,,
\]
 as ${^{-I}}\lambda=\lambda^{-1}$.
 We conclude that $1=\lambda \overline{\lambda}^{-1}$ and so $\lambda\in \units{\R}$ is real.
 Let us investigate cohomologous cycles.
 For $b\in \units{\C}$,
\[
   (b, \pm I)^{-1} \cdot (\lambda,-I) \cdot {^g}(b,\pm I) =
   ( ((b\overline{b})^{-1}\lambda)^{\pm 1}, -I)\,.
\]
 Since these are all the cohomologous cycles and $b\overline{b}$ is a positive real number, we
 see that there are exactly two elements of $H^1(\C/\R,\TAutS)$ with $\varphi=-I$, namely
\[
    \bc^+\ \colon \ c_g\ =\ (1,-I)
   \qquad\mbox{and}\qquad
    \bc^-\ \colon\ \ c_g\ =\ (-1,-I)\,.
\]
 Both give the same twisted form of $\units{\C}$ in which ${^g}t=\overline{t}^{-1}$ for
 $t\in \units{\C}$.
 This is the real non-split form of $\units{\C}$ whose fixed points are $S^1$.

 We consider the corresponding twisted forms of $\P^1_\C$.
 For the cocycle $\bc^+$, the anti-holomorphic involution sends $t\mapsto\overline{t}^{-1}$
 for $t\in \units{\C}$ and it interchanges the poles.
 This is reflection in the equator and has fixed point set $S^1$.
 For the twisted form given by the cocycle $\bc^-$, the anti-holomorphic involution sends
 $\units{\C} \ni t\mapsto -\overline{t}^{-1}$ and it interchanges the poles.
 This is the antipodal map and it has no fixed points.

 Figure~\ref{F:P1_R} shows these three real forms of the toric variety $\P^1_\C$.
\begin{figure}[htb]
\[
  \begin{picture}(70,130)(0,-40)
   \put(0,10){\includegraphics[height=70pt]{figures/RP1.eps}}
   \put(29.5,83){$\infty$}
   \put(32,0){$0$}
   \put(20,-12){$t\mapsto \overline{t}$}
   \put(1,-25){Reflection in}
   \put(1,-40){Meridian $(\One)$}
  \end{picture}
   \qquad\qquad
  \begin{picture}(70,130)(0,-40)
   \put(0,10){\includegraphics[height=70pt]{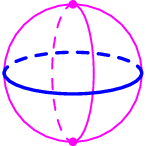}}
   \put(29.5,83){$\infty$}
   \put(32,0){$0$}
   \put(20,-12){$t\mapsto \overline{t}^{-1}$}
   \put(1,-25){Reflection in}
   \put(0,-40){Equator $(\bc^+)$}
  \end{picture}
   \qquad\qquad
  \begin{picture}(70,130)(0,-40)
   \put(0,10){\includegraphics[height=70pt]{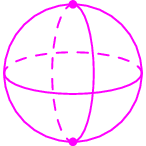}}
   \put(29.5,83){$\infty$}
   \put(32,0){$0$}
   \put(20,-12){$t\mapsto -\overline{t}^{-1}$}
   \put(-5,-25){Antipodal map}
   \put(23,-40){$(\bc^-)$}
  \end{picture}
\]
 \caption{Real forms of $P^1_\C$.}
 \label{F:P1_R}
\end{figure}
 The third real form is the Brauer-Severi variety.
 It is one of two real forms of the projective line, the other being the usual real projective line, $\R\P^1$.
 We have $\P^1_{\bc^+}\simeq \P^1_{\One}=\R\P^1$, but
 not as toric varieties.
 For example, the real points of $\P^1_{\bc^+}$ do not include the torus fixed points
 $\{0,\infty\}$, while these are real points of $\R\P^1$.

%
\subsection{A partition of Galois cohomology}\label{S:partition}

Let $\Sigma$ be a fan in a lattice $N$ and $K/k$ a Galois extension with Galois group
$\calG$.
Then we have a short exact sequence of groups
\[
  1 \rightarrow
   \T_N(K)\ \xrightarrow{\ \iota\ }\
   \TAutS(K)\ \xrightarrow{\ \pi\ }\
   \Aut_\Sigma
   \rightarrow 1
\]
which induces an exact sequence of Galois cohomology sets~\eqref{Eq:exact_sequence}
 \begin{equation}\label{Eq:LES}
  H^1(K/k,\T_N)\ \xrightarrow{\ \iota\ }\
  H^1(K/k,\TAutS)\ \xrightarrow{\ \pi\ }\
   H^1(K/k,\Aut_\Sigma)\ \xrightarrow{\ \delta\ }\
  H^2(K/k,\T_N)\ .
 \end{equation}
This begins with $H^1(K/k,\T_N)$, which is trivial by Hilbert's Theorem 90.

Since $\calG$ acts trivially on $\Aut_\Sigma$, the cohomology set $H^1(K/k,\Aut_\Sigma)$ consists of conjugacy
classes of homomorphisms $\varphi\colon\calG\to\Aut_\Sigma$ to which one associates twisted forms
${_\varphi\scrT}$ of $\T_N$, as in Proposition~\ref{P:tori}.

Given a twisted form $Y_K$ of the split toric variety $X_{\Sigma,K}$, the image $\varphi$ of
its cocycle $\bc$ under the composition $\TAutS\to\Aut_\Sigma\to\Aut(N)$ determines the
twisted torus ${_\varphi\scrT}$ acting on $Y_K$.
The fiber $\pi^{-1}(\varphi)$ above $\varphi$ consists of cocycles corresponding to the twisted forms
$Y'_K$ of $X_{\Sigma,K}$ with twisted torus ${_\varphi\scrT}$.
The following result identifies these fibers.

\begin{theorem}\label{Th:splitting}
  The map $\pi\colon H^1(K/k,\TAutS)\to H^1(K/k,\Aut_\Sigma)$ is surjective with
  the fiber over the conjugacy class of a homomorphism $\varphi\colon\calG\to\Aut_\Sigma$ equal
  to the quotient of the Galois cohomology set $H^1(K/k,{_\varphi\scrT})$
  by $H^0(K/k, \Aut_\Sigma) = (\Aut_\Sigma)^\calG = C_{\Aut_\Sigma}(\varphi(\calG))$,
  the centralizer of the image $\varphi(\calG) \subseteq \Aut_\Sigma$.
  That is, the Galois cohomology set $H^1(K/k, \TAutS)$ can be partitioned into 
  a disjoint union
\[
  H^1(K/k, \TAutS) = \coprod_\varphi H^1(K/k , {_\varphi\scrT}) / H^0(K/k , \Aut_\Sigma),
\]
where $\varphi$ varies over representatives of conjugacy classes of
homomorphisms $\varphi : \calG \to \Aut_\Sigma$.
  In particular, the association of $\varphi$ to the cocycle
  $\One_\varphi:=(\One,\varphi)$, where $\One\in H^1(K/k,{_\varphi\scrT})$ is the unit cocycle,
  is a section of the map $\pi$.
\end{theorem}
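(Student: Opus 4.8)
The plan is to analyze the long exact sequence~\eqref{Eq:LES} by twisting it: for each homomorphism $\varphi\colon\calG\to\Aut_\Sigma$, I would use the associated cocycle $\One_\varphi\colon g\mapsto(1,\varphi_g)$ to twist the group $\TAutS(K)$ and its normal subgroup $\T_N(K)$, and then apply the standard torsion (twisting) formalism for non-abelian cohomology from~\cite[I.\S5.3--5.5]{Serre}. First I would verify that $\One_\varphi$ is indeed a cocycle: since $\pi(\One_\varphi)=\varphi$ is a genuine homomorphism and $\calG$ acts trivially on $\Aut_\Sigma$, the cocycle condition~\eqref{Eq:cocycle} for $(1,\varphi_g)$ reduces to $\varphi_{gh}=\varphi_g\varphi_h$, which holds. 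This immediately gives surjectivity of $\pi$ and the promised section, since $\pi(\One_\varphi)$ is the class of $\varphi$; it also shows the connecting map $\delta$ in~\eqref{Eq:LES} is trivial on every conjugacy class.

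The key step is to identify the fiber $\pi^{-1}([\varphi])$. I would fix the cocycle $\One_\varphi$ and twist: let ${}_\varphi\TAutS(K)$ denote $\TAutS(K)$ with $\calG$-action $g\cdot a={}^{\One_\varphi(g)}({}^g a)$, and similarly ${}_\varphi\T_N(K)$, which by the explicit description in Proposition~\ref{P:tori} is exactly $\scrT_\varphi(K)$. The torsion principle says that the fiber of $\pi\colon H^1(\calG,\TAutS)\to H^1(\calG,\Aut_\Sigma)$ over $[\varphi]$ is in bijection with the image of $H^1(\calG,{}_\varphi\T_N)=H^1(\calG,\scrT_\varphi)$ in $H^1(\calG,{}_\varphi\TAutS)$ under the map induced by $\iota$. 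So it remains to check that this map is injective. For that I would invoke the portion of the twisted long exact sequence
\[
  H^0(\calG,{}_\varphi\Aut_\Sigma)\ \xrightarrow{\ \delta\ }\
  H^1(\calG,\scrT_\varphi)\ \xrightarrow{\ \iota\ }\
  H^1(\calG,{}_\varphi\TAutS)\,,
\]
and argue that the incoming connecting map is trivial. The point is that $\pi$ has a section over the base (namely $\One_{(\,\cdot\,)}$), equivalently the twisted exact sequence of groups $1\to\scrT_\varphi(K)\to{}_\varphi\TAutS(K)\to\Aut_\Sigma\to 1$ is still split; a split extension has trivial connecting map $H^0\to H^1$, so $\iota$ is injective on $H^1$. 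Combining, $\pi^{-1}([\varphi])\cong H^1(\calG,\scrT_\varphi)$, and since distinct conjugacy classes of $\varphi$ give disjoint fibers we get the disjoint union decomposition, with $\One_\varphi$ mapping to the basepoint $\One$ of each $H^1(\calG,\scrT_\varphi)$.

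The main obstacle I anticipate is bookkeeping with the twisted actions: one must be careful that ${}_\varphi\T_N(K)$ really is $\scrT_\varphi(K)$ as a $\calG$-module (this is where the semidirect-product structure $t_\alpha\mapsto{}^\psi t_\alpha$ interacts with the Galois action, via formula~\eqref{eq:varphi}), and that the torsion bijection is natural enough to intertwine $\iota$ on the twisted and untwisted sides. There is also a minor subtlety that the torsion bijection depends a priori on the chosen representative cocycle $\One_\varphi$, but since any two choices of $\varphi$ in a conjugacy class yield cohomologous $\One_\varphi$, the resulting identification of the fiber with $H^1(\calG,\scrT_\varphi)$ is well-defined up to the canonical isomorphism induced by conjugation. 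Everything else is the standard non-abelian cohomology machinery of~\cite[III.\S1]{Serre} applied in the twisted setting, so once the module identifications are pinned down the argument is routine.
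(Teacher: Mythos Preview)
Your twisting approach is the right framework and the setup is correct: the cocycle $\One_\varphi$ is well-defined, the twisted torus ${}_\varphi\T_N(K)$ is exactly $\scrT_\varphi(K)$, and the twisted short exact sequence $1\to\scrT_\varphi(K)\to{}_\varphi\TAutS(K)\to{}_\varphi\Aut_\Sigma\to 1$ does admit a $\calG$-equivariant splitting $\psi\mapsto(1,\psi)$, so the connecting map $\delta\colon H^0(\calG,{}_\varphi\Aut_\Sigma)\to H^1(\calG,\scrT_\varphi)$ is indeed trivial.  The gap is the inference that follows.  In non-abelian cohomology, exactness at $H^1(\calG,\scrT_\varphi)$ only says that $\iota^{-1}(\text{basepoint})=\Img\delta$; this gives trivial \emph{kernel as a pointed set}, not injectivity of $\iota$.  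The map $\iota\colon H^1(\calG,\scrT_\varphi)\to H^1(\calG,{}_\varphi\TAutS)$ is a map of pointed sets, not of groups, so ``trivial kernel'' does not propagate.  What actually governs the fibers of $\iota$ is~\cite[I.5.5, Prop.~39(ii)]{Serre}: two classes in $H^1(\calG,\scrT_\varphi)$ have the same image in $H^1(\calG,{}_\varphi\TAutS)$ if and only if they lie in the same orbit under the natural right action of $H^0(\calG,{}_\varphi\Aut_\Sigma)=Z_{\Aut_\Sigma}(\varphi(\calG))$, the centralizer of the image of $\varphi$.  Concretely, $\psi$ in this centralizer sends the class of a cocycle $\bt$ to the class of $g\mapsto{}^{\psi^{-1}}t_g$, and there is no reason for this action to be trivial.

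This is not a technicality that can be patched: take $\Sigma$ the fan of $\P^1\times\P^1$ (so $\Aut_\Sigma\cong D_8$), $K/k=\C/\R$, and $\varphi(g)=-I$.  Then $H^1(\C/\R,\scrT_\varphi)\cong\brauer{\R|\C}^2\cong(\Z/2\Z)^2$, while the centralizer $Z(-I)=D_8$ acts through the coordinate swap $J$, interchanging the two $\Z/2\Z$ factors.  The cocycles $((1,-1),-I)$ and $((-1,1),-I)$ are cohomologous in $\TAutS(\C)$ via $(1,J)$ but represent distinct classes in $H^1(\C/\R,\scrT_\varphi)$, so the fiber over $[\varphi]$ has three elements, not four.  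The paper's own proof proceeds by a direct computation rather than by twisting: it checks that $\bt\mapsto(\bt,\varphi)$ is well-defined on cohomology classes and that every cocycle in the fiber can be represented in this form, but it only compares $(\bt,\varphi)$ and $(\bt',\varphi)$ via coboundaries of the shape $(s,1)$ and does not address coboundaries $(s,\psi)$ with $\psi$ a nontrivial element of the centralizer.  So your argument and the paper's elide the same point; the correct conclusion in general is that the fiber over $[\varphi]$ is the orbit set $H^1(K/k,\scrT_\varphi)\big/Z_{\Aut_\Sigma}(\varphi(\calG))$.
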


\begin{proof}
 Let $\varphi\colon\calG\to\Aut_\Sigma$ be a homomorphism.
 Setting $c_g:=(1,\varphi_g)$, where $1\in\T_N$ is the unit,
 gives a function $\bc\colon\calG\to\TAutS(K)$.
 This is a cocycle because for $g,h\in\calG$, we have
\[
   c_{gh}\ =\ (1,\varphi_{gh})\ =\
   (1,\varphi_g)(1,\varphi_{h})\ =\ c_g\cdot{^g}c_h\,,
\]
 as $\varphi_g$ is a group automorphism of $\T_N(K)$ so that ${^{\varphi_g}}1=1$.

 Write \DeCo{$\One_\varphi$} for this cocycle.
 If $\varphi$ and $\psi$ are conjugate homomorphisms (cohomologous cocycles in
 $H^1(K/k,\Aut_\Sigma)$), then $\One_\varphi$ and $\One_\psi$ are cohomologous and
 represent the same element in $H^1(K/k,\TAutS)$.
 In this way, we see that the association $\varphi\mapsto\One_\varphi$ gives a map
\[
   H^1(K/k,\Aut_\Sigma)\ \longrightarrow\
   H^1(K/k,\TAutS)
\]
 which is a section of the map $\pi$.
 Thus $\pi$ is surjective.
 
 The identification of the fiber $\pi^{-1}([\varphi]) \cong H^1(K/k, {_\varphi\scrT}) / H^0(K/k, \Aut_\Sigma)$
 follows from \cite[Cor.~I.\textsection5.5.2]{Serre}.
\end{proof}

\begin{rem}
 The Galois cohomology sets computed in Section~\ref{S:realP1} illustrate
 Theorem~\ref{Th:splitting}.
 For $\P^1$, $\Aut_\Sigma=\{\pm I\}$,
 and there are two homomorphisms $\Gal(\C/\R)\to \Aut_\Sigma$.
 We found a unique cocycle associated to the trivial homomorphism.
 This is a general fact, as $H^1(K/k,\T_N)=1$, by Hilbert's Theorem 90.
 On the other hand, there were two cocycles associated to the non-trivial homomorphism
 $\varphi$ which gives the real non-split form of $\units{\C}$,
 with real points ${_\varphi\scrT}(\R) = S^1$.
 In fact, we computed $H^1(\C/\R,{_\varphi\scrT}) = H^1(\C/\R,S^1) = \units{\R}/\Rpos$,
 while $H^0(\C/\R, \Aut_\Sigma) = \{ \pm I \}$ acts trivially on $\units{\R}/\Rpos$
 (this is just the statement that $t > 0$ if and only if $t^{-1} > 0$).
\hfill\qed
\end{rem}

\begin{rem}
 The section $H^1(K/k,\Aut_\Sigma)\to H^1(K/k,\TAutS)$ of the map $\pi$~\eqref{Eq:LES}
 is reflected in work of Voskresenski{\u\i}, who constructed
 toric varieties  corresponding to the cocycles $\One_\varphi=(1,\varphi)$,
 for smooth projective $\varphi(\calG)$-invariant fans $\Sigma$.
\hfill\qed
\end{rem}


We recall the following statement (see also \cite[I.\textsection2.6]{Serre}).

\begin{proposition}
\label{prop:partition}
Let $\varphi : \calG = \Gal(K/k) \to \Aut_\Sigma$ be a homomorphism,
let $L_\varphi$ denote the intermediate Galois extension $k \subset L_\varphi \subset K$
where $L_\varphi = K^{\ker \varphi}$,
and let $\overline \varphi : \Gal(L_\varphi / k) = \Gal(K/k) / \ker \varphi \to \Aut_\Sigma$
be the map induced by $\varphi$.
Then $H^1(K/k ,\, {_\varphi\scrT}) \cong H^1(L_\varphi / k , \, {_{\overline{\varphi}}\scrT})$.
\end{proposition}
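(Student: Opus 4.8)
The plan is to recognize this as an instance of the general fact that the Galois cohomology $H^1$ of a torus depends only on its splitting field, and to prove it using the inflation--restriction exact sequence together with Hilbert's Theorem~90. Write $H := \ker\varphi$, a normal subgroup of $\calG = \Gal(K/k)$, so that $H = \Gal(K/L_\varphi)$ and $\calG/H = \Gal(L_\varphi/k)$. By the description following Proposition~\ref{P:tori}, the relevant coefficient module is $\scrT_\varphi(K) = N\otimes_\Z\units{K}$ with $\calG$ acting diagonally: through the Galois action on $\units{K}$ and through $\varphi$ on $N$. The observation that makes everything run is that $H=\ker\varphi$ acts trivially on $N$; since $N$ is free of finite rank, this gives an isomorphism $N\otimes_\Z\units{K}\cong\units{K}^{\oplus\rank N}$ of $\Z[H]$-modules, with $H$ acting through the Galois action on each summand.

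First I would compute the $H$-invariants: from the $\Z[H]$-module isomorphism above, $\scrT_\varphi(K)^H = N\otimes_\Z\units{L_\varphi}$, and the residual $\Gal(L_\varphi/k)$-action on this module is precisely the one defining $\scrT_{\overline\varphi}$, so $\scrT_\varphi(K)^H = \scrT_{\overline\varphi}(L_\varphi)$. Next, the same decomposition gives $H^1(H,\scrT_\varphi(K))\cong H^1(\Gal(K/L_\varphi),\units{K})^{\oplus\rank N}$, which vanishes by Hilbert's Theorem~90 applied to the extension $K/L_\varphi$. Finally I would invoke the inflation--restriction exact sequence in group cohomology for the normal subgroup $H\trianglelefteq\calG$ acting on $\scrT_\varphi(K)$:
\[
 1 \to H^1(\calG/H,\, \scrT_\varphi(K)^H) \xrightarrow{\ \mathrm{inf}\ }
 H^1(\calG,\, \scrT_\varphi(K)) \xrightarrow{\ \mathrm{res}\ }
 H^1(H,\, \scrT_\varphi(K)) \,.
\]
Since the last term vanishes, the inflation map is an isomorphism; identifying its source with $H^1(L_\varphi/k,\scrT_{\overline\varphi})$ and its target with $H^1(K/k,\scrT_\varphi)$ then yields the claimed isomorphism.

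I do not expect a substantive obstacle. The only points requiring care are bookkeeping of module structures: verifying that $H$ acting trivially on $N$ reduces both the computation of invariants and of $H^1(H,-)$, summand by summand, to ordinary Galois cohomology of $\units{K}$ where Hilbert~90 applies, and checking that the $\Gal(L_\varphi/k)$-action induced on $\scrT_\varphi(K)^H$ is genuinely $\scrT_{\overline\varphi}$ and not some other twist. If one prefers to avoid citing inflation--restriction, the same argument can be made by hand: a cocycle $\bt\colon\calG\to\scrT_\varphi(K)$ has trivial restriction to $H$ after adjusting by a coboundary (Hilbert~90 for $K/L_\varphi$), and the cocycle identity together with normality of $H$ then forces $\bt$ to factor through $\calG/H$ with values in $\scrT_\varphi(K)^H$, which is exactly a cocycle for $\scrT_{\overline\varphi}$ over $L_\varphi/k$.
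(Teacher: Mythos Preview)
Your proof is correct and follows essentially the same route as the paper: both apply the inflation--restriction sequence to the normal subgroup $H=\ker\varphi$, use that $H$ acts trivially on $N$ to reduce $H^1(H,\scrT_\varphi(K))$ to copies of $H^1(\Gal(K/L_\varphi),\units{K})$ which vanish by Hilbert~90, and identify the $H$-invariants with $\scrT_{\overline\varphi}(L_\varphi)$. The paper states the four-term sequence with transgression, but since the restriction term already vanishes this makes no difference.
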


Note that $L_\varphi$, ${_\varphi\scrT}$, and ${_{\overline{\varphi}}\scrT}$ depend
only on the conjugacy class of $\varphi$.\medskip

\noindent{\it Proof.} 
Given a closed normal subgroup $H$ of a profinite group $G$ and a $G$-module $A$,
there is an exact sequence
 \begin{equation}
  \label{eq:inf-rest}
    1 \to H^1(G/H,A^H) \xrightarrow{\,\text{inf}\,} H^1(G,A)
     \xrightarrow{\,\text{res}\,} H^1(H,A)^{G/H}
     \xrightarrow{\,\text{tg}\,} H^2(G/H,A^H)\,,
 \end{equation}
where the indicated maps are the inflation, restriction and transgression maps associated
to the normal subgroup $H$; see \cite[Prop. 1.6.6]{NSW}.

Consider the exact sequence~\eqref{eq:inf-rest} where $G=\Gal(K/k)$, 
$H=\ker{\varphi}$ and $A= {_\varphi\scrT}(K)$.
Since $H= \ker\{\varphi\colon G\to \Aut_\Sigma\}$ and $H$ acts trivially on the lattice $N$, we
have  ${_\varphi\scrT}(K) \simeq \T_N(K)$ as an $H$-module.
With this it follows from Hilbert's Theorem 90 that
 \begin{equation}
 \label{eq:restriction-1}
   H^1(H,A)^{G/H}\ \subseteq\ H^1(H,A)\ =\ H^1(\Gal(K/L_\varphi),\T_N(K))
    \ =\ H^1(K/L_\varphi,\G_m^n)\ =\ 1\, .
 \end{equation}
Furthermore,
 \begin{equation}
  \label{eq:fixed-1}
  A^H\ =\ {_\varphi\scrT}(K)^{\Gal(K/L_\varphi)}\ =\ {_{\overline{\varphi}}\scrT}(L_\varphi)
 \end{equation}
as a $G/H = \Gal(L_\varphi/k)$-module.
Then \eqref{eq:inf-rest}, \eqref{eq:restriction-1}, and \eqref{eq:fixed-1} give an
isomorphism 
\[
\begin{split}
  H^1(K/k,{_\varphi\scrT})\ =\ H^1(G,A) \ & \cong \ H^1(G/H,A^H) \\
   & = \ H^1(\Gal(L_\varphi/k),\, {_{\overline{\varphi}}\scrT}(L_\varphi))
   \ =\ H^1(L_\varphi/k,\, {_{\overline{\varphi}}\scrT})\,.
 \qed
\end{split}
\]

\begin{rem}
In \cite{D03,D04} the subgroup $\Aut_\Sigma \subset \TAutS$ is called
the group of multiplicative automorphisms of $(X_\Sigma, \T_N)$
and the subgroup $\T_N \subset \TAutS$ is called the group of elementary toric automorphisms.
Accordingly, a real structure $\bc \in H^1(\C/\R, \TAutS)$ is called a multiplicative
real structure if $\bc \in H^1(\C/\R, \Aut_\Sigma) \subset H^1(\C/\R, \TAutS)$.
Theorem~4.1.1 of \cite{D04} states that any toric real structure on a complex toric
variety $X$ in which the set of real points of $X$ is nonempty is, up to conjugation,
a multiplicative real structure.

For more general field extensions, if $\bc \in H^1(K/k, \TAutS)$ is such that the corresponding twisted form of
$(X_{\Sigma,K}, \T_{N,K})$ 
descends to a $k$-variety, then the open dense orbit $\calO_0 \subset X_{\Sigma, K}$ contains a
$k$-rational point 
if and only if $\bc \in H^1(K/k, \Aut_\Sigma)$.
If $\bc \in H^1(K/k, \Aut_\Sigma)$ then $x_0 \in \calO_0$, the distinguished point in the dense
orbit, is fixed by $\calG$. 
Conversely, if a $k$-rational point $y = t x_0 \in \calO_0$ is fixed by $\calG$ then,
writing $c_g = (t_g, \varphi_g)$ for each $g \in \calG$,
$t x_0 = y = {^{g_\varphi} y} = t_g {^g t} x_0$, meaning $t^{-1} t_g {^g t} = 1$; conjugating by $t$
takes $\bc$ to a cocycle with each $t_g=1$.

More generally, if $y$ is any $k$-rational point of $X_{\Sigma,K}$ then for each
$g \in \calG$, 
$\varphi_g$ fixes the cone $\tau$ such that $y \in \calO_\tau$.
Writing $y = t x_\tau$, the same computation shows conjugating by $t$ takes $\bc$ to a cocyle
with each $t_g$ in the stabilizer of $x_\tau$. 
\hfill\qed
\end{rem}

\subsection{Compactifications of torsors}

Suppose that $\Sigma=\{0\}$.
Then $X_\Sigma=X_{\{0\}}$ is simply $\T_N$.
Since $\Sigma$ is preserved by every homomorphism
$\varphi\colon \calG=\Gal(K/k)\to\Aut(N)$,
for every $k$-form ${_\varphi\scrT}$ of the torus $\T_{N,K}$, there are $k$-forms of
$X_{\{0\}}$, and these are in bijection with $H^1(K/k,{_\varphi\scrT}) / H^0(K/k, \Aut(N))$.
These are pairs $(Y,{_\varphi\scrT})$ of $k$-varieties with $Y\simeq{_\varphi\scrT}$, but
where the Galois action on $Y_K\simeq \T_{N,K}$ is not necessarily that on the group
$\scrT_{\varphi,K}\simeq \T_{N,K}$.
Such $k$-forms of $X_{\{0\}}$ with torus ${_\varphi\scrT}$
are \DeCo{{\sl torsors}} over ${_\varphi\scrT}$.

We restate Theorem~\ref{Th:splitting} giving an arithmetic version of the fundamental theorem
of toric varieties---that normal varieties over an algebraically closed field equipped with
the action of a dense split torus are classified by fans.

\begin{theorem}
 Suppose either that $\Sigma$ is a quasiprojective fan or that $K/k$ is a quadratic Galois
 extension.
 Then every torsor $(Y,{_\varphi\scrT})$ over a torus ${_\varphi\scrT}$ given by a
 homomorphism $\varphi\colon \calG\to\Aut_\Sigma$ has an equivariant compactification that
 is a $k$-form of the toric variety $X_\Sigma$.
\end{theorem}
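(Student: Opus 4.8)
The plan is to derive this from Theorem~\ref{Th:splitting} and Corollary~\ref{C:quadratic}, since the statement merely repackages geometrically the partition $H^1(K/k,\TAutS)=\coprod_\varphi H^1(K/k,\scrT_\varphi)$. First I would use the discussion of compactifications of torsors above to identify the given torsor $(Y,\scrT_\varphi)$ with a cocycle $\bt\in H^1(K/k,\scrT_\varphi)$, and then use the bijection in Theorem~\ref{Th:splitting} to pass to the cocycle $\bc$ with $\bc_g=(t_g,\varphi_g)$, which lies in the fiber of $\pi\colon H^1(K/k,\TAutS)\to H^1(K/k,\Aut_\Sigma)$ over the class of $\varphi$. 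This $\bc$ is a cocycle with values in $\TAutS(K)$, so it determines a twisted form of the pair $(X_{\Sigma,K},\T_{N,K})$, and by construction the twisted torus acting on it is $\scrT_\varphi$.

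Next I would invoke Corollary~\ref{C:quadratic}: when $\Sigma$ is quasiprojective or $K/k$ is quadratic, $H^1(K/k,\TAutS)$ classifies $k$-forms of $(X_{\Sigma,K},\T_{N,K})$, so the twisted form attached to $\bc$ descends to a $k$-variety $Y'$ carrying an action of $\scrT_\varphi$ with a dense orbit; by construction $Y'$ is a $k$-form of the toric variety $X_\Sigma$. To exhibit $Y'$ as a compactification of a torsor, I would note that every toric automorphism preserves the unique dense orbit $\calO_0$ of $X_{\Sigma,K}$, so $\calO_0$ is stable under the twisted $\calG$-action defining $Y'$; being open and $\calG$-stable, $\calO_0$ descends to a dense open $\scrT_\varphi$-stable subscheme $Y''\subset Y'$, and $Y'$ is an equivariant compactification of $Y''$.

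The only genuinely computational point, which I would do last, is to check that $Y''$ is isomorphic to $Y$ as a torsor over $\scrT_\varphi$. Identifying $\calO_0(K)$ with $\T_N(K)$ via $s\mapsto s x_0$, the toric automorphism $\alpha$ determined by a pair $(t_\alpha,\varphi)\in\TAutS(K)$ restricts on $\calO_0$ to the map $s\mapsto t_\alpha\cdot{}^{\varphi}s$, since $\alpha(s x_0)={}^{\varphi}s\cdot\alpha(x_0)=t_\alpha\cdot{}^{\varphi}s\, x_0$. Composing with the ordinary Galois action on $\T_N(K)$, the twisted action of $g\in\calG$ on $\calO_0(K)$ becomes $s\mapsto t_g\cdot{}^{g_\varphi}s$, which is exactly the twisted Galois action of the torsor attached to the cocycle $\bt$ described before the theorem. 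Hence $Y''\cong Y$, and $Y'$ is the required equivariant compactification of $Y$ that is a $k$-form of $X_\Sigma$.

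I expect the substantive step to be the descent in the second paragraph: that is where the hypothesis on $\Sigma$ or on $K/k$ enters, and it rests on Corollary~\ref{C:quadratic}, hence on W{\l}odarczyk's theorem in the quadratic case and on quasiprojectivity otherwise. Once descent is granted, everything else is bookkeeping with the cocycle constructions already in place; the only care needed is to track the adjoints between $M$ and $N$ and the direction of the $\Aut_\Sigma$-action in the final computation, which is routine given Proposition~\ref{P:tori} and the identification of the group of toric automorphisms with $\TAutS(K)$.
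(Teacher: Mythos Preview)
Your proposal is correct and follows exactly the approach the paper intends: the theorem is presented there as a restatement of Theorem~\ref{Th:splitting} (combined with Corollary~\ref{C:quadratic} for descent), with no separate proof given. In fact you supply more detail than the paper does, explicitly verifying that the dense orbit of the descended $k$-form recovers the original torsor; the paper leaves this implicit.
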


This completes the classification of quasiprojective compactifications of torsors, as
every arithmetic toric variety $(Y,\scrT)$ gives a $\varphi$-invariant fan $\Sigma$ and
$(Y,\scrT)$ is the closure of the torsor $(Y_0,\scrT)$  where $Y_0$ is the dense orbit.

%
%
\section{Galois cohomology and the class group}\label{S:classgroup}

We use the presentation of the class group appearing in the
quotient construction of Section~\ref{Cox} and the fibration of Theorem~\ref{Th:splitting}  
to compute the Galois cohomology set $H^1(K/k,\TAutS)$, and then classify projective spaces
with cyclic Galois groups.

%
%
\subsection{Galois cohomology and the class group}\label{S:GCCG}

Let $\Sigma(1)$ be the set of rays in the fan $\Sigma$, and let $\symg{\Sigma(1)}$
be the group of permutations of $\Sigma(1)$.
Then ${\Aut}_\Sigma$ is naturally a subgroup of $\symg{\Sigma(1)}$.
As in Section~\ref{Cox}, we assume that the cones of $\Sigma$ span a full rank sublattice of
$N$.
We obtain a short exact sequence
 \begin{equation}
   \label{eq:ses2}
    0\ \longrightarrow\  M\  \longrightarrow\  \Z^{\Sigma(1)}\
      \longrightarrow\  \cl{\Sigma}\  \longrightarrow\  0\,.
 \end{equation}
The torsion subgroup $\cl{\Sigma}_{\text{tor}}$ of $\cl{\Sigma}$ is isomorphic to 
$\Z/a_1 \Z \times \cdots \times \Z/a_r \Z$,
for some integers $a_1\geq \cdots \geq a_r \geq 2$.
We will assume that the field $K$ satisfies
 \begin{equation}
   \label{eq:assumption}
   \Ext^1_\Z(\cl{\Sigma},\units{K})\ =\ 0\,.
 \end{equation}
That is, the equations $z^{a_i} - \lambda =0$ have solutions in $\units{K}$ for all
$i=1,\ldots,r$ and $\lambda \in \units{K}$.
This assumption~\eqref{eq:assumption} holds when $\cl{\Sigma}$ is free or when 
$K$ is an algebraic closure of $k$.
In practice, we may  assume that $K$ satisfies \eqref{eq:assumption} whenever keeping track of
the splitting field of the toric variety is not relevant,
since \eqref{eq:assumption} holds for sufficiently large extensions $K/k$.

Under this assumption,~\eqref{eq:ses2} induces the exact sequence
 \begin{equation}\label{eq:nses}
   1\ \longrightarrow\  \hom{}{\cl{\Sigma}}{\units{K}}
    \ \longrightarrow\ \hom{}{\Z^{\Sigma(1)}}{\units{K}}
    \ \longrightarrow\ \hom{}{M}{\units{K}}\ \longrightarrow\ 1\,.
 \end{equation}
Let  $\gm^{\Sigma(1)}$ be the torus $\spec(\Z[\Z^{\Sigma(1)}])$ and
$G_\Sigma$ be the abelian group scheme $\spec(\Z[\cl{\Sigma}])$.
Then we may rewrite~\eqref{eq:nses} as
 \begin{equation}\label{eq:twses}
   1\ \longrightarrow\ G_\Sigma(K)\ \xrightarrow{\ \imath_K\ } \G^{\Sigma(1)}_m({K})
    \ \xrightarrow{\ \phi_K\ }\ \T_N(K)\ \longrightarrow\ 1\,.
 \end{equation}
Since $\Aut_\Sigma \subset \symg{\Sigma(1)}$, $\Aut_\Sigma$ acts on $\G_m^{\Sigma(1)}$
and~\eqref{eq:twses} is $\Aut_\Sigma$-equivariant.
We may use any homomorphism $\varphi \colon \calG=\Gal(K/k) \to \Aut_\Sigma $
to compatibly twist the $\calG$-action on~\eqref{eq:twses}, 
obtaining a short exact sequence of twisted group schemes,
 \begin{equation}
   \label{eq:tori}
   1\ \longrightarrow\  G_{\Sigma,\varphi}(K)\ \xrightarrow{\ \imath_K\ }\
         \G^{\Sigma(1)}_{m,\,\varphi}(K)\ \xrightarrow{\ \phi_K\ }\
         {_\varphi\scrT}(K)\ \longrightarrow\ 1\,.
 \end{equation}

By Theorem~\ref{Th:splitting}, the quotient $H^1(K/k, {_\varphi\scrT}) / H^0(K/k, \Aut_\Sigma)$ is the fiber 
above the homomorphism $\varphi \in H^1(K/k, \Aut_\Sigma)$ under the projection from 
$H^1(K/k, \TAutS)$.
We will use the sequence \eqref{eq:tori} to describe $H^1(K/k, {_\varphi\scrT})$
when the extension $K/k$ is cyclic.

We begin by establishing some notation.
The orbit decomposition $\Sigma(1) = O_1 \amalg \cdots \amalg O_s$ of $\Sigma(1)$ under
the action of $\calG$ via $\varphi \colon \calG \to \Aut_\Sigma \subset \symg{\Sigma(1)}$
gives a decomposition
 \[
    \Z^{\Sigma(1)}\ =\ \Z^{O_1} \oplus \cdots \oplus \Z^{O_s}\,.
 \]
For each $i=1,\dotsc,s$, choose a representative $v_i\in O_i$ and let $\calG_i\subset\calG$ be
its stabilizer, so that $O_i=\calG/\calG_i$.
This gives a decomposition of $\G^{\Sigma(1)}_{m,\,\varphi}(K)$ as a $\Z[\calG]$-module,
 \begin{equation}
   \label{eq:decomp}
    \G^{\Sigma(1)}_{m,\,\varphi}(K) \ \cong \
    \left( \Z[\calG/\calG_1]\otimes \units{K} \right)\ \times \cdots \times\
    \left( \Z[\calG/\calG_s]\otimes \units{K} \right)\, .
 \end{equation}

Let $\{ g_{i,j} \mid j=1,\ldots,m_i\}$ be a set of representatives for
$\calG/\calG_i$ and write an element $\alpha$ in $\Z[\calG/\calG_i]\otimes \units{K}$ as
$\alpha = \sum_{j=1}^{m_i}\ [g_{i,j}]\otimes \lambda_j$, where $[g_{i,j}]$ is the coset
of $g_{i,j}$.
Consider the map
\[
    \units{K}\ \longrightarrow\ \Z[\calG/\calG_i] \otimes \units{K}\,,
    \quad\mbox{ where }\quad
    \lambda\ \longmapsto\ \sum_{j=1}^{m_i} [g_{i,j}] \otimes 
        {\rule{0pt}{11pt}^{g_{i,j}}}\!\lambda \, .
\]
We leave the reader to check that if $\lambda \in \units{(K^{\calG_i})}$
then $\sum [g_{i,j}] \otimes {\rule{0pt}{11pt}^{g_{i,j}}}\!\lambda$ is $\calG$-fixed.
Write $\Delta_i$ for the restriction 
$\units{(K^{\calG_i})} \to \{ \Z[\calG/\calG_i] \otimes \units{K} \}^{\calG}$.
One may check that $\Delta_i$ is an isomorphism\str{,}
and does not depend on the choices of the representatives $g_{i,j}$.

We state the main results of this section.

\begin{theorem} \label{thm:h1}
  Let $\cl{\Sigma}$ be as defined in $\eqref{eq:nses}$ and suppose that $K/k$ 
  is a finite cyclic extension with $\Ext^1_\Z ( \cl{\Sigma}, \units{K}) = 0$.
  Then for any homomorphism $\varphi : \calG \to \Aut_\Sigma$,
\[
    H^1(K/k, \scrT_{\varphi} )\ \cong \
      \frac{G_{\Sigma,\varphi}(k)\cap \Img{\scrN_{\G^{\Sigma(1)}_{m,\varphi}}}}{\Img{\scrN_{G_{\Sigma,\varphi} }}}\,.
\]
\end{theorem}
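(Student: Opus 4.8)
The plan is to exploit the twisted short exact sequence~\eqref{eq:tori} and the hypothesis $\operatorname{Ext}^1_\Z(\cl{\Sigma},\units{K})=0$, which guarantees surjectivity of $\phi_K$ and hence the long exact cohomology sequence
\[
  1 \to G_{\Sigma,\varphi}(k) \to \G^{\Sigma(1)}_{m,\varphi}(k) \xrightarrow{\phi} \scrT_\varphi(k) \to H^1(K/k,G_{\Sigma,\varphi}) \to H^1(K/k,\G^{\Sigma(1)}_{m,\varphi}) \to \cdots
\]
From this I would extract the identification $H^1(K/k,\scrT_\varphi)\cong \coker\!\big(H^0(\G^{\Sigma(1)}_{m,\varphi})\to H^0(\scrT_\varphi)\big)$-style statement one dimension up; more precisely, the segment
\[
  H^1(K/k,\G^{\Sigma(1)}_{m,\varphi}) \xrightarrow{\phi_*} H^1(K/k,\scrT_\varphi) \xrightarrow{\delta} H^2(K/k,G_{\Sigma,\varphi}) \xrightarrow{\imath_*} H^2(K/k,\G^{\Sigma(1)}_{m,\varphi})
\]
shows $H^1(K/k,\scrT_\varphi)/\Img\phi_* \cong \ker\big(H^2(K/k,G_{\Sigma,\varphi})\to H^2(K/k,\G^{\Sigma(1)}_{m,\varphi})\big)$.

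The next step uses cyclicity of $K/k$ crucially, twice. First, by the decomposition~\eqref{eq:decomp} and Shapiro's Lemma, $\G^{\Sigma(1)}_{m,\varphi}(K)$ is a direct sum of induced modules $\Ind^{\calG}_{\calG_i}\units{K}$, so $H^1(K/k,\G^{\Sigma(1)}_{m,\varphi}) = \bigoplus_i H^1(\calG_i,\units{K}) = 0$ by Hilbert's Theorem 90 (each $\calG_i$ is cyclic, being a subgroup of the cyclic $\calG$). Hence $\phi_*$ is the zero map and $H^1(K/k,\scrT_\varphi)\cong \ker\big(H^2(K/k,G_{\Sigma,\varphi})\to H^2(K/k,\G^{\Sigma(1)}_{m,\varphi})\big)$ outright. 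Second, since $\calG$ is cyclic, formula~\eqref{Eq:H^2} (the paper's $H^2(\calG,T(K)) = T(k)/\Img\scrN_T$ for abelian $T$) applies to both $G_{\Sigma,\varphi}$ and $\G^{\Sigma(1)}_{m,\varphi}$, giving
\[
  H^2(K/k,G_{\Sigma,\varphi}) = G_{\Sigma,\varphi}(k)/\Img\scrN_{G_{\Sigma,\varphi}}, \qquad
  H^2(K/k,\G^{\Sigma(1)}_{m,\varphi}) = \G^{\Sigma(1)}_{m,\varphi}(k)/\Img\scrN_{\G^{\Sigma(1)}_{m,\varphi}}.
\]

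Finally I would chase the diagram: the map $\imath_*$ on $H^2$ is induced by $\imath_K\colon G_{\Sigma,\varphi}(K)\hookrightarrow \G^{\Sigma(1)}_{m,\varphi}(K)$, and under the identifications above it is simply the map $G_{\Sigma,\varphi}(k)/\Img\scrN_{G_{\Sigma,\varphi}} \to \G^{\Sigma(1)}_{m,\varphi}(k)/\Img\scrN_{\G^{\Sigma(1)}_{m,\varphi}}$ induced by the inclusion on $k$-points (one must check the norm maps are compatible with $\imath$, which is immediate since $\imath$ is $\calG$-equivariant). Its kernel consists of classes $[x]$ with $x\in G_{\Sigma,\varphi}(k)$ such that $x\in\Img\scrN_{\G^{\Sigma(1)}_{m,\varphi}}$ when viewed in $\G^{\Sigma(1)}_{m,\varphi}(k)$ — that is exactly $\big(G_{\Sigma,\varphi}(k)\cap\Img\scrN_{\G^{\Sigma(1)}_{m,\varphi}}\big)/\Img\scrN_{G_{\Sigma,\varphi}}$, the claimed expression. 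The main obstacle I anticipate is not any single step but the bookkeeping of justifying that everything is natural with respect to the twisting by $\varphi$: one needs~\eqref{eq:tori} to genuinely be a $\calG$-equivariant exact sequence (so that Shapiro and the long exact sequence apply to the twisted modules), that $H^1(K/k,\G^{\Sigma(1)}_{m,\varphi})$ really vanishes despite the twist — which works precisely because $\varphi$ permutes coordinates, reducing via Shapiro to untwisted $H^1(\calG_i,\units{K})$ — and that the norm maps $\scrN_{G_{\Sigma,\varphi}}$, $\scrN_{\G^{\Sigma(1)}_{m,\varphi}}$ in the statement are the ones arising from the cyclic-group formula~\eqref{Eq:H^2}. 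None of this is deep, but it is where care is required.
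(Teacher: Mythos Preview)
Your proof is correct and follows essentially the same route as the paper: both use the long exact sequence from~\eqref{eq:tori}, invoke the vanishing $H^1(K/k,\G^{\Sigma(1)}_{m,\varphi})=1$ (via Shapiro and Hilbert~90, exactly as in the paper's lemma yielding~\eqref{Eq:triv}), and then apply~\eqref{Eq:H^2} to identify the $H^2$ terms and read off the kernel of $\imath_*$. One minor remark: Hilbert~90 holds for arbitrary finite Galois extensions, so you need not appeal to cyclicity of the $\calG_i$ for that step---cyclicity of $\calG$ is used only for the norm description of $H^2$.
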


Here, $\scrN_{\G^{\Sigma(1)}_{m,\varphi}}$ and $\scrN_{G_{\Sigma,\varphi} }$ are the norm
homomorphisms of Section~\ref{section: norm brauer}.
We determine $\Img{\scrN_{\G^{\Sigma(1)}_{m,\varphi} }}$.

\begin{theorem} \label{thm:calt} 
  Let $\cl{\Sigma}$ and $\Delta_i,\calG_i$ for $i=1,\dotsc,s$ be as above with $K/k$ a
  finite Galois extension (not necessarily cyclic) with
  $\Ext^1_\Z ( \cl{\Sigma}, \units{K}) = 0$.
  Then for $\varphi : \calG \to \Aut_\Sigma$,
 \[
    \Img{\scrN_{\G_{m,\varphi}^{\Sigma(1)}}}\ =\ \prod_{i=1}^s \Delta_i (\Img{N_{K/K^{\calG_i}}})\,.
 \]
\end{theorem}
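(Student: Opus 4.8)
The key point is that the norm homomorphism $\scrN_{\G_{m,\varphi}^{\Sigma(1)}}$ respects the direct sum decomposition~\eqref{eq:decomp} of $\G_{m,\varphi}^{\Sigma(1)}(K)$ as a $\Z[\calG]$-module, so the computation reduces to a single orbit $O_i = \calG/\calG_i$. The plan is therefore to:
(1) reduce to one induced module $\Z[\calG/\calG_i]\otimes\units{K}$ using the fact that $\scrN_{A\times B} = \scrN_A\times\scrN_B$ for a product of $\calG$-modules, so that $\Img\scrN_{\G_{m,\varphi}^{\Sigma(1)}} = \prod_{i=1}^s \Img\scrN_{\Z[\calG/\calG_i]\otimes\units{K}}$, where the target of the $i$th factor is the $\calG$-invariants $\{\Z[\calG/\calG_i]\otimes\units{K}\}^\calG$;
(2) identify the target $\calG$-invariants with $\units{(K^{\calG_i})}$ via the isomorphism $\Delta_i$ described just before the statement;
(3) compute the norm of a general element $\alpha = \sum_j [g_{i,j}]\otimes\lambda_j$ explicitly and show that, after the identification $\Delta_i$, the map $\scrN_{\Z[\calG/\calG_i]\otimes\units{K}}$ becomes the field norm $N_{K/K^{\calG_i}}\colon \units{K}\to\units{(K^{\calG_i})}$, whence $\Img\scrN_{\Z[\calG/\calG_i]\otimes\units{K}} = \Delta_i(\Img N_{K/K^{\calG_i}})$.

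For step (3), the concrete computation runs as follows. Write $\calG$ as a disjoint union of cosets $\calG = \coprod_j g_{i,j}\calG_i$, and for $g\in\calG$ note that left multiplication by $g$ permutes the cosets $[g_{i,j}]$. Applying $\scrN = \prod_{g\in\calG}{}^g(-)$ to $\alpha$ and collecting terms, the coefficient of each coset $[g_{i,j}]$ in $\scrN(\alpha)$ is a product over $\calG_i$-many group elements of Galois-twisted copies of the $\lambda_\ell$'s; because $\alpha$ need not be $\calG$-invariant, all the $\lambda_\ell$ enter, and one sees that $\scrN(\alpha) = \Delta_i\bigl(\prod_\ell N_{K/K^{\calG_i}}(\lambda_\ell)\bigr)$ up to reindexing — in particular $\scrN(\alpha)$ lands in $\Delta_i(\Img N_{K/K^{\calG_i}})$, and conversely every such element is hit by taking $\alpha = [g_{i,1}]\otimes\lambda$ with a single nonzero coefficient. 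This last point uses Shapiro's Lemma only implicitly; it is really just the transitivity of the field norm together with the structure of the induced module. I would also remark that this part of the argument does not use the cyclicity hypothesis on $K/k$ nor the assumption $\operatorname{Ext}^1_\Z(\cl{\Sigma},\units{K})=0$ — the latter is only needed to make sense of the short exact sequence~\eqref{eq:tori} in which $\G_{m,\varphi}^{\Sigma(1)}$ appears — so the theorem holds for arbitrary finite Galois $K/k$, as stated.

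\textbf{Main obstacle.} The only real subtlety is bookkeeping: keeping track of the coset representatives $g_{i,j}$ and verifying that the resulting description is genuinely independent of those choices (and of the choice of orbit representative $v_i$), so that $\Delta_i$ and hence the right-hand side are well defined. This is exactly the check the text has already delegated to the reader in defining $\Delta_i$, so I would state the independence as a lemma, prove it by noting that changing $g_{i,j}$ to $g_{i,j}h$ with $h\in\calG_i$ only permutes and Galois-twists the summands in a way absorbed by the $\calG$-action, and then the norm computation goes through cleanly. Everything else is a direct unwinding of definitions.
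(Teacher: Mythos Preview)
Your approach is essentially the same as the paper's: reduce to a single induced factor, compute the norm there, and hit the reverse inclusion with single-support elements. The paper packages step~(3) via an explicit auxiliary map $\theta_i\colon \sum_j [g_{i,j}]\otimes\lambda_j \mapsto \prod_j {}^{g_{i,j}^{-1}}\lambda_j$ and the identity $\scrN(\alpha_i)=\Delta_i\bigl(N_{K/K^{\calG_i}}(\theta_i(\alpha_i))\bigr)$; your stated formula $\scrN(\alpha)=\Delta_i\bigl(\prod_\ell N_{K/K^{\calG_i}}(\lambda_\ell)\bigr)$ is missing the Galois twist by $g_{i,\ell}^{-1}$ inside the norm (this is not just a reindexing), but since $\Img N_{K/K^{\calG_i}}$ is a subgroup this slip does not affect either containment.
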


Before proving these results, we compute the cohomology of the middle term
in~\eqref{eq:tori}. 

\begin{lemma}
 Let $\varphi \colon \calG \to \Aut_\Sigma\subset \symg{\Sigma(1)}$ and $\calG_i, i=1,\ldots,s$
 be as above.
 Then
 \begin{equation} \label{eq:prop}
   H^r(K/k , \G^{\Sigma(1)}_{m,\,\varphi}(K)) \ \cong\  
    \prod_{i=1}^s\ H^r(\calG_i , \units{K})\,,
\end{equation}
for all $r \geq 0$. In particular,
 \begin{eqnarray}
   \nonumber
  H^0(K/k , \G^{\Sigma(1)}_{m,\,\varphi}(K))\ &  \cong& \prod_{i=1}^s\ \units{(K^{\calG_i})}\,,\\
   \label{Eq:triv}
  H^1(K/k , \G^{\Sigma(1)}_{m,\,\varphi}(K))\ & =&  1 \, , \quad \text{and} \\
    \nonumber
  H^2(K/k, \G^{\Sigma(1)}_{m,\,\varphi}(K))\ & \cong& \prod_{i=1}^s\ \brauer{K^{\calG_i} | K}\,. 
\end{eqnarray}
\end{lemma}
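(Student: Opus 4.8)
The plan is to deduce the isomorphism from the decomposition of the twisted torus $\G^{\Sigma(1)}_{m,\,\varphi}(K)$ as a $\Z[\calG]$-module in \eqref{eq:decomp} together with Shapiro's Lemma. Recall that $\G^{\Sigma(1)}_{m,\,\varphi}(K) \cong \bigoplus_{i=1}^s \left(\Z[\calG/\calG_i]\otimes_\Z \units{K}\right)$, where the $\calG$-action is diagonal: $\calG$ acts on $\Z[\calG/\calG_i]$ by permuting cosets and on $\units{K}$ by its usual Galois action. Since group cohomology commutes with finite direct sums of modules, we immediately get $H^r(\calG, \G^{\Sigma(1)}_{m,\,\varphi}(K)) \cong \bigoplus_{i=1}^s H^r(\calG, \Z[\calG/\calG_i]\otimes_\Z \units{K})$.

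First I would identify each summand $\Z[\calG/\calG_i]\otimes_\Z \units{K}$ with the induced module $\Ind^\calG_{\calG_i}(\units{K})$, where $\units{K}$ is regarded as a $\calG_i$-module via restriction of the $\calG$-action. This is the standard fact that $\Z[\calG/H]\otimes_\Z A \cong \Ind^\calG_H(\Res^\calG_H A)$ for a $\calG$-module $A$ — the diagonal action on the left matches the induced-module structure on the right. Then Shapiro's Lemma (quoted in the excerpt as \cite[I.2.5]{Serre}) gives $H^r(\calG, \Ind^\calG_{\calG_i}\units{K}) \cong H^r(\calG_i, \units{K})$ for all $r \geq 0$. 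Combining these yields \eqref{eq:prop}.

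For the three special cases: when $r=0$, $H^0(\calG_i,\units{K}) = (\units{K})^{\calG_i} = \units{(K^{\calG_i})}$ by Galois theory, since $\calG_i = \Gal(K/K^{\calG_i})$. When $r=1$, $H^1(\calG_i, \units{K}) = H^1(\Gal(K/K^{\calG_i}), \units{K}) = 1$ by Hilbert's Theorem 90 applied to the finite Galois extension $K/K^{\calG_i}$; summing over $i$ gives the trivial group. When $r=2$, $H^2(\calG_i,\units{K}) = H^2(\Gal(K/K^{\calG_i}),\units{K}) = \brauer{K^{\calG_i}|K}$ by the identification of the relative Brauer group with second Galois cohomology recalled in Section~\ref{section: norm brauer} (citing \cite[Thm.~6.3.4]{NSW}).

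The only genuine point requiring care — the "main obstacle," such as it is — is verifying cleanly that the diagonal $\Z[\calG]$-module $\Z[\calG/\calG_i]\otimes_\Z\units{K}$ is the induced module $\Ind^\calG_{\calG_i}(\units{K})$ with its natural structure, i.e., checking that the map sending a set of coset representatives $\{g_{i,j}\}$ to the corresponding generators intertwines the two actions and is independent of the choice of representatives (the same compatibility that underlies the isomorphism $\Delta_i$ discussed just above the lemma statement). Everything else is a formal assembly of Shapiro's Lemma, Hilbert 90, and the Brauer group identification, each of which is already available in the excerpt. I would phrase the argument to emphasize that \eqref{eq:prop} is the substantive assertion and the three displayed consequences are immediate specializations.
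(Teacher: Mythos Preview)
Your proposal is correct and follows essentially the same approach as the paper: decompose via \eqref{eq:decomp}, identify each factor $\Z[\calG/\calG_i]\otimes\units{K}$ with $\Ind_{\calG_i}^{\calG}\bigl(\Res^{\calG}_{\calG_i}\units{K}\bigr)$, apply Shapiro's Lemma, and then specialize using Galois theory, Hilbert~90, and the Brauer-group identification. The paper treats the induced-module identification as a one-line ``since'' clause rather than the main obstacle, but otherwise your outline matches it almost verbatim.
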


\begin{proof}
 It follows from Shapiro's lemma that
 $H^r(K/k, \Z[\calG/\calG_i]\otimes \units{K})\ \cong\ H^r(\calG_i, \units{K})$, 
 for all $r\geq 0$ and $i=1,\ldots, s$, since
 $\Z[\calG/\calG_i]\otimes \units{K} \cong \Ind_{\calG_i}^\calG\left(\Res^\calG_{\calG_i}(\units{K})\right)$.
 Applying this to each factor in \eqref{eq:decomp} proves \eqref{eq:prop}.

 We have $H^0(\calG_i , \units{K}) = (\units{K})^{\calG_i} = \units{(K^{\calG_i})}$ by the definition of $H^0$.
 The vanishing of $H^1$ follows from Hilbert's Theorem 90.
 Lastly, the identification of $H^2$ follows from the canonical
 isomorphism $H^2(\calG_i, \units{K}) \simeq \brauer{K^{\calG_i}|K}$,
 as explained in Section~\ref{section: norm brauer}.
\end{proof}

\begin{proof}[Proof of Theorem~$\ref{thm:h1}$]
The long exact sequence of cohomology coming from \eqref{eq:tori} includes
\[
  H^1(K/k, \G_{m,\varphi}^{\Sigma(1)}(K))\ \to\
  H^1(K/k , {_\varphi\scrT}(K))
   \ \to\
   H^2(K/k , G_{\Sigma,\varphi}(K))\ \xrightarrow{\imath_2}\
   H^2(K/k , \G_{m,\varphi}^{\Sigma(1)}(K))\,.
\]
By~\eqref{Eq:triv} we have $H^1(K/k , \G_{m,\varphi}^{\Sigma(1)}(K))=1$ and so,
\[  
   H^1( K/k , {_\varphi\scrT} )\ =\ H^1(K/k , {_\varphi\scrT}(K))\ \simeq\ \ker \imath_2\, .
\]
Since $\calG$ is cyclic, by~\eqref{Eq:H^2} we have,
\[
  H^2(K/k,G_{\Sigma,\varphi}(K))\ =\ 
   \frac{G_{\Sigma,\varphi}(k)}{\Img \scrN_{G_{\Sigma,\varphi}}}
  \qquad\mbox{\rm and}\qquad
  H^2(K/k , \G_{m,\varphi}^{\Sigma(1)}(K))\ =\ 
  \frac{\G_{m,\varphi}^{\Sigma(1)}(k)}{\Img \scrN_{\G_{m,\varphi}^{\Sigma(1)}}}\ .
\]
The result follows immediately.
\end{proof}

\begin{proof}[Proof of Theorem~$\ref{thm:calt}$]
For $i=1,\dots,s$, define $\theta_i : \Z[\calG/\calG_i] \otimes \units{K} \to \units{K}$
by
\[
  \theta_i \biggl(\sum_{j=1}^{m_i} [g_{i,j}] \otimes \lambda_j \biggr)\ =\
    \prod_{j=1}^{m_i}\, {\rule{0pt}{11pt}^{g_{i,j}^{-1}}}\! \lambda_j\,.
\]
We claim that if
$\alpha_i\in\Z[\calG/\calG_i]\otimes\units{K}(\subset\G_{m,\varphi}^{\Sigma(1)})$, then   
\[
  \scrN_{\G_{m,\varphi}^{\Sigma(1)}}(\alpha_i)\ =\ \Delta_i ( N_{K / K^{\calG_i}} (\theta_i(\alpha_i)) )
   \ \in\ \Delta_i( \Img N_{K / K^{\calG_i}}) \, .
\]
Indeed, this follows directly (albeit tediously) from the definitions of $\scrN$ and
$\theta_i$, using the expansion of $\alpha_i$,
\[
     \alpha_i\ =\ \sum_{j=1}^{m_i} [g_{i,j}] \otimes \lambda_j\;.
\]
Hence for an arbitrary element $\alpha = (\alpha_1,\dots,\alpha_s) \in
  \G_{m,\varphi}^{\Sigma(1)}$ (with $\alpha_i\in\Z[\calG/\calG_i]\otimes\units{K}$),
\[
  \scrN_{\G_{m,\varphi}^{\Sigma(1)}}(\alpha)\ =\ \prod_{i=1}^s \scrN_{\G_{m,\varphi}^{\Sigma(1)}}(\alpha_i)
   \ \in\ \prod_{i=1}^s \Delta_i( \Img N_{K / K^{\calG_i}}) \, .
\]
Conversely, suppose $\lambda_i \in \Img N_{K / K^{\calG_i}}$ for $i=1,\dots,s$.
Let $\kappa_i \in K$ such that $N_{K / K^{\calG_i}}(\kappa_i) = \lambda_i$.
For each $i=1,\dots,s$, let $\alpha_i = [g_{i1}] \otimes\, {^{g_{i1}}\! \kappa_i}$.
Then
\[
  \scrN_{\G_{m,\varphi}^{\Sigma(1)}}(\alpha_i)\ =\ \Delta_i( N_{K / K^{\calG_i}} \theta_i(\alpha_i))
  \ =\ \Delta_i( N_{K / K^{\calG_i}} (\kappa_i))
  \ =\ \Delta_i (\lambda_i)\, .
\]
Hence $\prod \Delta_i(\lambda_i) = \scrN_{\G_{m,\varphi}^{\Sigma(1)}}(\alpha)$
for $\alpha = (\alpha_1, \dotsc , \alpha_s)$.
\end{proof}

\subsection{Arithmetic projective spaces}

We apply the results of Subsection~\ref{S:GCCG} to classify the arithmetic
forms of projective space 
considered as the toric variety $(\P^n, \G_m^{n+1} / \G_m)$.
For brevity we simply write $\P^n$.

Write $[n{+}1]$ for $\{ 0 , 1, \ldots , n \}$.
Let $N \cong \Z^n$ be the lattice $\Z[n{+}1]/\Z(1,\dotsc,1)$
and $\Sigma$ be the fan in $N$ whose cones are generated 
by proper subsets of the set of images of standard basis elements in $\Z[n{+}1]$.
The symmetric group $\symg{[n{+}1]}$ acts by permuting the coordinates
and is the group of automorphisms $\Aut_\Sigma$.

Given $n$ and $d$, let $\scrP(n{+}1,d)$ be the set of partitions
$\mbm = (d \geq m_1 \geq m_2 \geq \dotsb \geq m_s \geq 1)$ of $n{+}1$
such that each part $m_i$ divides $d$.
Write $|\mbm|$ for the length $s$ of a partition.
The set $\scrP(n{+}1,d)$ is in one-to-one correspondence with the conjugacy classes of elements 
$\sigma \in \symg{[n{+}1]}$
satisfying $\sigma^d = 1$.
Write $\scrP_1(n{+}1,d) \subset \scrP(n{+}1,d)$ for those partitions with $m_s=1$ 
and $\scrP_*(n{+}1,d) = \scrP(n{+}1,d) \setminus \scrP_1(n{+}1,d)$.

\begin{theorem}\label{prop:cyclic}
Let $K/k$ be a cyclic extension of degree $d$ with Galois group 
$\calG = \langle \xi \rangle$.
The set $E(K/k, \P^n)$ of $k$-forms of\/ $\P^n$ that split over $K$ 
is in one-to-one correspondence with 
 \[
    \scrP_1(n{+}1,d) \amalg
    \coprod_{\mbm \in \scrP_*(n{+}1,d)} 
    \frac{\units{k}\cap\bigcap_{i=1}^{|\mbm|}\Img N_{K / K^{\xi^{m_i}}}}{\Img N_{K/k}}\ .
\]
\end{theorem}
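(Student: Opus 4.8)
The plan is to combine the two structural results already in hand: Theorem~\ref{Th:splitting}, which partitions $H^1(K/k,\TAutS)$ as a disjoint union of $H^1(K/k,\scrT_\varphi)$ over conjugacy classes of homomorphisms $\varphi\colon\calG\to\Aut_\Sigma$, and Theorem~\ref{thm:h1} together with Theorem~\ref{thm:calt}, which compute each $H^1(K/k,\scrT_\varphi)$ in the cyclic case. Since $\P^n$ is projective, Corollary~\ref{C:quadratic} (via Theorem~\ref{Th:classification}(1)) identifies $E(K/k,\P^n)$ with $H^1(K/k,\TAutS)$, so no descent issue arises. For $\P^n$ we have $\Aut_\Sigma=\symg{[n{+}1]}$, and since $\calG=\langle\xi\rangle$ is cyclic of order $d$, a homomorphism $\varphi\colon\calG\to\symg{[n{+}1]}$ is determined by the permutation $\varphi(\xi)$, which must satisfy $\varphi(\xi)^d=1$; conjugacy classes of such homomorphisms correspond exactly to conjugacy classes of elements of order dividing $d$, i.e.\ to partitions $\mbm\in\scrP(n{+}1,d)$ as stated. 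So the first step is to write
\[
  E(K/k,\P^n)\ =\ H^1(K/k,\TAutS)\ =\ \coprod_{\mbm\in\scrP(n{+}1,d)} H^1(K/k,\scrT_{\varphi_\mbm})\,,
\]
where $\varphi_\mbm$ is a representative homomorphism with cycle type $\mbm$.

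Next I would identify the orbit data entering Theorems~\ref{thm:h1} and~\ref{thm:calt} for $\varphi_\mbm$. The orbits $O_1,\dots,O_s$ of $\calG$ on $\Sigma(1)=[n{+}1]$ are the cycles of $\varphi_\mbm(\xi)$, of sizes $m_1,\dots,m_s$, so the stabilizer $\calG_i$ of a point in $O_i$ is the unique subgroup of $\calG$ of index $m_i$, namely $\langle\xi^{m_i}\rangle$, and $K^{\calG_i}$ is the intermediate field of degree $m_i$ over $k$. The module $\Z^{\Sigma(1)}$ with its twisted action is $\bigoplus_i \Z[\calG/\calG_i]$, and the class group sequence \eqref{eq:ses2} is $0\to M\to\bigoplus_i\Z[\calG/\calG_i]\to\cl{\Sigma}\to 0$. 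Here $\cl{\Sigma}=\Z$ for $\P^n$ (the map to $\Z$ being the sum of coordinates), so $\cl{\Sigma}$ is free, $\operatorname{Ext}^1_\Z(\cl{\Sigma},\units{K})=0$ automatically, and $G_{\Sigma,\varphi}=\gm$ with the trivial $\calG$-action (it is the kernel of the degree map, which is Galois-equivariant since each $\Z[\calG/\calG_i]$ contributes the same total weight). Thus $G_{\Sigma,\varphi}(k)=\units{k}$ and $\Img\scrN_{G_{\Sigma,\varphi}}=\Img N_{K/k}$. Plugging into Theorem~\ref{thm:h1} gives
\[
  H^1(K/k,\scrT_{\varphi_\mbm})\ \cong\ \frac{\units{k}\cap\Img\scrN_{\G^{\Sigma(1)}_{m,\varphi}}}{\Img N_{K/k}}\,,
\]
and Theorem~\ref{thm:calt} rewrites $\Img\scrN_{\G^{\Sigma(1)}_{m,\varphi}}=\prod_{i=1}^s\Delta_i(\Img N_{K/K^{\calG_i}})$. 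The remaining work is to show that the intersection of $\units{k}$ (sitting diagonally inside $\G^{\Sigma(1)}_{m,\varphi}(k)$ via $G_{\Sigma,\varphi}\hookrightarrow\G^{\Sigma(1)}_{m,\varphi}$) with this product equals $\units{k}\cap\bigcap_{i=1}^s\Img N_{K/K^{\calG_i}}$ inside $\units{k}$; this amounts to unwinding that the composite $\imath$ followed by the $i$-th projection, composed with $\theta_i$, sends $\lambda\in\units{k}$ to $\lambda$ itself, so membership of the image of $\lambda$ in the $i$-th factor $\Delta_i(\Img N_{K/K^{\calG_i}})$ is exactly the condition $\lambda\in\Img N_{K/K^{\calG_i}}$. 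This yields, for $\mbm\in\scrP_*(n{+}1,d)$,
\[
  H^1(K/k,\scrT_{\varphi_\mbm})\ \cong\ \frac{\units{k}\cap\bigcap_{i=1}^{|\mbm|}\Img N_{K/K^{\xi^{m_i}}}}{\Img N_{K/k}}\,.
\]

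Finally I would treat the two cases separately according to whether some part $m_s=1$. If $m_s=1$ (i.e.\ $\mbm\in\scrP_1$), then one orbit $O_s$ is a fixed point, $\calG_s=\calG$, $K^{\calG_s}=k$, and $\Img N_{K/K^{\calG_s}}=\Img(\mathrm{id})=\units{k}$; more to the point, $\Delta_s$ identifies this factor with all of $\units{k}$, so the numerator $\units{k}\cap\prod_i\Delta_i(\dots)$ is all of $\units{k}$ (the fixed-point orbit imposes no norm condition and its $\Delta_s$-image already contains everything the diagonal $\units{k}$ could hit there), hence $H^1(K/k,\scrT_{\varphi_\mbm})\cong\units{k}/\Img N_{K/k}$... \emph{but} in fact one checks it collapses further: the presence of a trivial orbit means $\scrT_{\varphi_\mbm}$ has a split $\gm$ factor whose $H^1$ vanishes, and more carefully the quotient is a single point — this is exactly the statement that each partition in $\scrP_1$ contributes one element, so this piece of $E(K/k,\P^n)$ is $\scrP_1(n{+}1,d)$ itself. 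For $\mbm\in\scrP_*$ (all parts $\geq 2$, no fixed point) we keep the quotient group computed above. Assembling the disjoint union over all $\mbm\in\scrP(n{+}1,d)=\scrP_1\amalg\scrP_*$ gives precisely
\[
  E(K/k,\P^n)\ \leftrightarrow\ \scrP_1(n{+}1,d)\ \amalg\ \coprod_{\mbm\in\scrP_*(n{+}1,d)}\frac{\units{k}\cap\bigcap_{i=1}^{|\mbm|}\Img N_{K/K^{\xi^{m_i}}}}{\Img N_{K/k}}\,.
\]
The main obstacle I anticipate is the $\scrP_1$ case: justifying cleanly \emph{why} a partition with a fixed point contributes exactly one $k$-form — equivalently, that $\units{k}\cap\prod_i\Delta_i(\Img N_{K/K^{\calG_i}})=\Img N_{K/k}$ when some $\calG_i=\calG$ — which requires noting that a $k$-rational fixed point of the torus action on $\P^n$ lets one trivialize the relevant cocycle component (cf.\ the remark after Proposition~\ref{prop:partition}), so $\scrT_{\varphi_\mbm}\simeq\gm\times(\text{anisotropic part})$ and the $\gm$ factor kills the class group obstruction. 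The rest is bookkeeping with Shapiro's lemma identifications $K^{\langle\xi^{m_i}\rangle}$ and the explicit maps $\Delta_i,\theta_i$ from the proof of Theorem~\ref{thm:calt}.
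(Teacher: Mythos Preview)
Your approach is exactly the paper's: identify $E(K/k,\P^n)=H^1(K/k,\TAutS)$ by projectivity, partition by Theorem~\ref{Th:splitting} over cycle types $\mbm\in\scrP(n{+}1,d)$, observe $\cl{\Sigma}=\Z$ is free so \eqref{eq:assumption} holds and $G_{\Sigma,\varphi}=\gm$ diagonally, then apply Theorems~\ref{thm:h1} and~\ref{thm:calt} with $\calG_i=\langle\xi^{m_i}\rangle$.

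The only issue is your treatment of the $\scrP_1$ case, where you have a sign error that leads you to overcomplicate things. When $m_s=1$ the stabilizer is $\calG_s=\calG$, so $K^{\calG_s}=k$ and therefore $N_{K/K^{\calG_s}}=N_{K/k}$, \emph{not} the identity; hence $\Img N_{K/K^{\calG_s}}=\Img N_{K/k}$, not $\units{k}$. With this correction the intersection $\bigcap_i \Img N_{K/K^{\calG_i}}$ already contains the term $\Img N_{K/k}$, so the numerator $\units{k}\cap\bigcap_i\Img N_{K/K^{\calG_i}}$ is contained in $\Img N_{K/k}$ and hence equals the denominator, giving $H^1(K/k,\scrT_{\varphi_\mbm})=\{\One_{\varphi_\mbm}\}$ immediately. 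This is precisely the paper's one-line argument, and it makes your detour through split $\gm$ factors and $k$-rational fixed points unnecessary.
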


\begin{proof}
We have $E(K/k, \P^n) = H^1(K/k, \TAut_\Sigma)$.
We first describe $H^1(K/k, \Aut_\Sigma)$, then the fibers of the projection map 
$\pi : H^1(K/k, \TAut_\Sigma) \to H^1(K/k, \Aut_\Sigma)$.

As before, $H^1(K/k, \Aut_\Sigma)$
is the set of conjugacy classes $[\varphi]$ of homomorphisms
$\varphi : \calG \cong \Z / d\Z \to \Aut_\Sigma \cong \symg{[n{+}1]}$.
The conjugacy class $[\varphi]$ is determined by the cycle type of $\varphi(\xi)$,
which is a permutation whose order divides $d$.
Hence $H^1(K/k, \Aut_\Sigma) = \scrP(n{+}1,d)$.\smallskip  

The dual sequence \eqref{eq:ses2} becomes
\[
   0\ \to\ M\ \to\ \Z^{[n{+}1]}\ \to\ \dual{\Z(1,\dotsc,1)}\ \to\ 0,
\]
where $M= \{ f \in \Z^{[n{+}1]} \mid \sum_{i=0}^n \ f(i) \ = 0 \}$.
In particular, $\cl{\Sigma} = \dual{\Z(1,\dotsc,1)}$ is free, so the assumption
\eqref{eq:assumption} is satisfied. 

Let $\varphi$ be a homomorphism $\calG \to \Aut_\Sigma = \symg{[n{+}1]}$
with cycle type $\mbm = (m_1,\dots,m_s)$.
By Theorem \ref{thm:h1},
\[
  H^1(K/k, {_\varphi\scrT})\ =\ 
    \frac{ G_{\Sigma,\varphi}(k) \cap \Img \scrN_{\G_{m,\varphi}^{\Sigma(1)}} }
      { \Img \scrN_{G_{\Sigma,\varphi}} } \, .
\]
In the sequence \eqref{eq:tori}, $G_{\Sigma,\varphi}(K) \cong \units{K}$
maps into $(\units{K})^{n{+}1}$ as the diagonal $\Delta_{\units{K}}$.
For each $i=1,\dotsc,s$, the subgroup $\calG_i$ is $\langle \xi^{m_i} \rangle$.
Therefore by Theorem \ref{thm:calt},
\[
  G_{\Sigma,\varphi}(k) \cap \Img \scrN_{\G_{m,\varphi}^{\Sigma(1)}}\ \cong\ 
  \Bigl\{ x \in \units{k} \mid x \in \bigcap_{i=1}^s \Img N_{K / K^{\xi^{m_i}}} \Bigr\}
\]
and $\Img \scrN_{G_{\Sigma,\varphi}} \cong \Img N_{K/k} \subset \units{k}$.

Finally, $H^0(K/k, \Aut_\Sigma)$ acts by permuting the entries of $\G_{m,\varphi}^{\Sigma(1)}$,
so it acts trivially on the diagonal $G_{\Sigma,\varphi}$.
Hence it acts trivially on $H^2(K/k, G_{\Sigma,\varphi})$ and on the kernel $\ker \imath_2$,
which is $H^1(K/k,{_\varphi\scrT})$.
Therefore the fibers of $\pi : H^1(K/k, \TAut_\Sigma) \to H^1(K/k, \Aut_\Sigma)$
are the cohomology groups $H^1(K/k, {_\varphi\scrT})$ (rather than the quotients of these by $H^0(K/k, \Aut_\Sigma)$).

This shows that
\[
  H^1(K/k,\TAut_\Sigma)\ =\ \coprod_{\mbm \in \scrP(n{+}1,d)}
    \frac{\units{k}\cap \bigcap_{i=1}^{|\mbm|} \Img N_{K / K^{\xi^{m_i}}} }{ \Img N_{K/k} } \, .
\]

If $\Sigma(1)$ has a fixed point $v = O_{i_0}$ under the action of $\calG$,
then $\calG_{i_0} = \calG$, hence $\Img N_{K/K^{\calG_{i_0}}} = \Img N_{K/k}$, and hence
$H^1(K/k , {_\varphi\scrT}) = \One_\varphi$. 
The maps $\varphi$ for which there is a fixed point are precisely those
whose conjugacy class lies in $\scrP_1(n{+}1,d)$.
This shows that $\pi : H^1(K/k, \TAut_\Sigma) \to H^1(K/k, \Aut_\Sigma) = \scrP(n{+}1,d)$
is one-to-one over $\scrP_1(n{+}1,d) \subset \scrP(n{+}1,d)$.
\end{proof}

\begin{cor}\label{C:prime}
Let $K/k$ be a cyclic extension of prime degree $d$.
Then
\[
  E(K/k,\P^n)\ =\
  \begin{cases}
    \scrP_1(n{+}1, d) \amalg \brauer{k|K}, & \text{if\ }d \mid n{+}1 \\
    \scrP(n{+}1, d), & \text{otherwise.}
  \end{cases}
\]
\end{cor}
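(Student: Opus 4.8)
The plan is to specialize Theorem~\ref{prop:cyclic} to the case that $d$ is prime. First I would analyze $\scrP(n{+}1,d)$: since $d$ is prime, a partition $\mbm \in \scrP(n{+}1,d)$ has each part $m_i$ dividing $d$, so each $m_i \in \{1,d\}$. Thus $\mbm$ is determined by the number of parts equal to $d$; if there are $q$ such parts, then $n{+}1 = qd + (\text{number of parts equal to }1)$, which forces $q \leq (n{+}1)/d$ and imposes the congruence that $n{+}1 - qd \geq 0$. In particular $\scrP_*(n{+}1,d)$—partitions with no part equal to $1$—is nonempty precisely when some $q$ gives $n{+}1 = qd$ exactly, i.e.\ when $d \mid n{+}1$; in that case there is exactly \emph{one} such partition, namely $\mbm = (d,d,\dotsc,d)$ with $q = (n{+}1)/d$ parts. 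When $d \nmid n{+}1$, every partition has at least one part equal to $1$, so $\scrP_*(n{+}1,d) = \emptyset$ and $\scrP_1(n{+}1,d) = \scrP(n{+}1,d)$.

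Next I would plug these observations into the disjoint-union formula of Theorem~\ref{prop:cyclic}. In the case $d \nmid n{+}1$, the coproduct over $\scrP_*$ is empty, so $E(K/k,\P^n) = \scrP_1(n{+}1,d) = \scrP(n{+}1,d)$, giving the second line. In the case $d \mid n{+}1$, the coproduct over $\scrP_*$ has the single term corresponding to $\mbm = (d,\dotsc,d)$, so its contribution is
\[
  \frac{\units{k}\cap\bigcap_{i=1}^{q}\Img N_{K / K^{\xi^{d}}}}{\Img N_{K/k}}\,.
\]
Here every $m_i = d$, and $\xi^d = 1$ since $\calG = \langle\xi\rangle$ has order $d$, so $K^{\xi^d} = K^{\langle 1\rangle} = K$ and $N_{K/K^{\xi^d}} = N_{K/K} = \id$; thus $\Img N_{K/K^{\xi^d}} = \units{K}$ and the intersection $\units{k}\cap\units{K} = \units{k}$. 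Hence the term collapses to $\units{k}/\Img N_{K/k}$, which by the computation in Section~\ref{section: norm brauer} (equation~\eqref{Eq:H^2} applied to $\G_m$, so $H^2(\calG,\units{K}) = \units{k}/\Img N_{K/k}$) equals $H^2(\calG,\units{K}) = \brauer{k|K}$, using that $K/k$ is cyclic. This yields the first line $\scrP_1(n{+}1,d)\amalg\brauer{k|K}$.

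I do not anticipate a serious obstacle here; the corollary is essentially a bookkeeping exercise extracting the prime-degree combinatorics from the general formula. The one point requiring a little care is the identification $\units{k}/\Img N_{K/k} = \brauer{k|K}$: this needs the hypothesis that $K/k$ is cyclic (so that $H^2(\calG,\units{K})$ has the quotient description~\eqref{Eq:H^2}) together with the identification $\brauer{k|K} = H^2(\Gal(K/k),\units{K})$ from Section~\ref{section: norm brauer}. I would also double-check that $\scrP_1(n{+}1,d)$ is correctly the full fiber-set on which $\pi$ is a bijection, which is exactly the last sentence of the proof of Theorem~\ref{prop:cyclic}, so nothing new is needed. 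If desired, one could additionally remark that when $d \mid n{+}1$ the non-split, non-multiplicative $k$-forms of $\P^n$ are exactly the Brauer–Severi varieties attached to the cyclic algebras split by $K$, but this is not required for the statement.
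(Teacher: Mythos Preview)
Your proposal is correct and follows essentially the same route as the paper's own proof: both observe that for prime $d$ every partition in $\scrP(n{+}1,d)$ has the form $(d,\dotsc,d,1,\dotsc,1)$, that $\scrP_*(n{+}1,d)$ is either empty or the singleton $\{(d,\dotsc,d)\}$ according to whether $d\mid n{+}1$, and that for this unique element each stabilizer is trivial so the fiber reduces to $\units{k}/\Img N_{K/k}\cong\brauer{k|K}$. The only cosmetic difference is that the paper phrases the last step via the stabilizers $\calG_i=\{1\}$ while you work directly with $\xi^{m_i}=\xi^d=1$; these are the same computation.
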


\begin{proof}
Every element of $\scrP(n{+}1,d)$ has the form $(d,\dotsc,d,1,\dotsc,1)$.
If $d$ does not divide $n{+}1$ then $\scrP(n{+}1,d) = \scrP_1(n{+}1,d)$ and the result follows.
Otherwise $\scrP_*(n{+}1,d)$ contains the single element $\mbm_* := (d,d,\dotsc,d)$.
For $[\varphi] = \mbm_*$ we have $\calG_i = \{1\}$ for $i=1,\dots,s$.
Hence $K^{\calG_i} = K$ and $\Img N_{K / K^{\calG_i}} = \units{K}$, for $i=1,\dots,s$.
It follows that
 \begin{equation}\label{Eq:prime}
   H^1(K/k, {_\varphi\scrT})\ =\ \frac{ \units{k} }{ \Img N_{K/k} }\ \cong\ \brauer{k|K},
 \end{equation}
which completes the proof.
\end{proof}

For example,
$E(\C/\R,\P^1)=\scrP_1(2,2)\amalg\brauer{\R|\C}=\{(1,1)\}\amalg(\units{R}/\Rpos)$,
giving the three real forms of $\P^1$ computed in Section~\ref{S:realP1}.



\section{Arithmetic Toric Surfaces}
\label{sec:surf}

All toric surfaces are quasiprojective, and so by Corollary~\ref{C:quadratic} arithmetic
toric surfaces are classified by Galois cohomology.
By Theorem~\ref{Th:splitting} the computation of Galois cohomology
is reduced to computing
$H^1(K/k, {_\varphi\scrT}) / H^0(K/k, \Aut_\Sigma)$ for all homomorphisms 
$\varphi\colon\Gal(K/k)\to\Aut_\Sigma\subset \Aut(N)$,
which we may assume are injective, by Proposition~\ref{prop:partition}. 
We may herefore replace $ \Gal(K/k) $ with the conjugacy class in $ \Aut_\Sigma $ of its image 
\DeCo{$\Gamma$} under $ \varphi $.

Identifying $N$ with $\Z^2$ identifies
 $ \Aut(N) $ with $ \GL(2,\Z) $. 
We will show that every finite subgroup of $\GL(2,\Z)$ occurs
as the automorphism group of a fan $\Sigma$ of a smooth complete toric surface. 
Then we will compute $H^1( K/k , {_\varphi\scrT})$ for all conjugacy classes of 
homorphisms $ \varphi \colon \calG = \Gal(K/k) \to \Aut_\Sigma $ and lastly describe
the fiber $ H^1( K/k , {_\varphi\scrT}) / H^0(K/k,\Aut_\Sigma) $.

\begin{rem} \label{rem:action}
 With these identifications,  the subgroup $ \Gamma \subset \Aut_\Sigma $  
 induces  a $\Z[\calG]$-module  structure on $\Z^2$.
 The corresponding $\calG$-module structure on 
 ${_\varphi\scrT}(K) = \Z^2\otimes\units{K}=\units{K}\times\units{K}$ is the simultaneous
 action of $ \Gamma $ on $N=\Z^2$ and its action on $ \units{K} $
 as the Galois group $ \Gal(K/k) $. 
Note the the action on $ \Z^2$ is simply the restriction of the action of $ \GL(2,\Z) $.
More precisely, if $g\in\calG$ with $\varphi(g)=(\begin{smallmatrix}a&b\\c&d\end{smallmatrix})$, and
 $(x,y)\in\units{K}\times\units{K}$, then 
 \[
    (x,y)\ \longmapsto\ {^g (x,y)}\ =\ \left(\ g(x)^a g(y)^b,\ g(x)^c g(y)^d \ \right)\,.
 \]
Similarly,  any map $\Gal(K/k)\to\GL(n,\Z)$ induces a corresponding action on $(\units{K})^n$.
\hfill\qed
\end{rem}


\subsection{Finite subgroups of \texorpdfstring{$\GL(2,\Z)$}{GL(2,Z)}}

Write $D_{2m}$ for the dihedral group of order $2m$,
 \[
    D_{2m} := 
    \langle \rho,\mbr\mid\mbr\rho=\rho^{-1}\mbr\ \text{ and }\ \rho^m=\mbr^2=e\rangle\,,
 \]
and write $C_m$ for the cyclic group of order $m$.
A \emph{maximal} finite subgroup of $\GL(2,\Z)$ is isomorphic to either $D_8$ or $D_{12}$.
Table~\ref{table:subgroups} contains a complete set of representatives for the conjugacy classes
of subgroups of $\GL(2,\Z)$ as well as their generators.
(See \cite[\textsection IX.14]{MR0340283}.)

\begin{table}[!htb]

\caption{Finite subgroups of $\GL(2,\Z)$ and their generators }
\label{table:subgroups}
\begin{tabular}{l||l}
\rowcolor[gray]{.8}
Cyclic & Dihedral \\ \hline\hline
\rowcolor[rgb]{0.96,0.96,0.9}
$C_6 = \langle A \rangle $    &  $D_{12} = \langle A, J \rangle$               \\
\rowcolor[gray]{.8}
$C_4 = \langle B \rangle  $   &  $D_{8\phantom{0}} = \langle B, J \rangle$     \\
\rowcolor[rgb]{0.96,0.96,0.9}
$C_3 = \langle A^2 \rangle $  &  $D_{6\phantom{0}} =  \langle A^2, JA \rangle $  \\
\rowcolor[gray]{.8}
$C_{2} = \langle A^3 \rangle = \langle B^2 \rangle =\langle -I \rangle $ &  $D'_{6\phantom{0}} =\langle A^2, J \rangle $ \\
\rowcolor[rgb]{0.96,0.96,0.9}
$D_{2} = \langle C \rangle  $      &    $D_{4} = \langle -I, C \rangle  $ \\
\rowcolor[gray]{.8}
$D_{2}' = \langle J \rangle  $      & $D_{4}' = \langle -I, J \rangle$     \\
\rowcolor[rgb]{0.96,0.96,0.9}
$C_1 = \langle I \rangle  $      &    \\ \hline
\end{tabular}
\qquad
\begin{tabular}{l} 
  $A$ $ =\left(\begin{array}{rr} 0 & -1 \\ 1 & 1 \end{array}\right)$
   \raisebox{-14pt}{\rule{0pt}{30pt}} \\
  $B$  $ = \left(\begin{array}{rr} 0 & -1 \\ 1 & 0 \end{array}\right)$ 
   \raisebox{-14pt}{\rule{0pt}{33pt}} \\
   $C$   $= \left(\begin{array}{rr} 1 & 0 \\ 0 & -1 \end{array}\right) $ 
   \raisebox{-14pt}{\rule{0pt}{33pt}} \\ 
  $J$   $= \left(\begin{array}{rr} 0& 1 \\ 1 & 0 \end{array}\right) $ 
   \raisebox{-9pt}{\rule{0pt}{28pt}}
\end{tabular}
\end{table}

\subsection{Smooth complete toric surfaces}

The toric surface $X_\Sigma$ corresponding to a fan $\Sigma$ in $\Z^2$ is 
complete if and only if $\Z^2$ is the union of the cones in $\Sigma$.
The surface is smooth if and only if every two-dimensional cone $\sigma$ of $\Sigma$
is generated by the primitive vectors lying in its rays.
That is, if the cone is isomorphic to the positive quadrant in $\Z^2$.

\begin{proposition}\label{prop:surfaces}
  For each conjugacy class of finite subgroups of $\GL(2,\Z)$ there is a smooth complete toric
  surface whose fan has automorphism group in that class.
\end{proposition}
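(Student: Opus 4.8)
The plan is to exhibit, for each of the eleven conjugacy classes of finite subgroups $G\subset\GL(2,\Z)$ listed in Table~\ref{table:subgroups}, a complete fan $\Sigma\subset\Z^2$ that is invariant under $G$, whose smooth refinement remains $G$-invariant, and whose full automorphism group $\Aut_\Sigma$ is exactly the chosen representative $G$ (not a larger group). Since $\GL(2,\Z)$ has only finitely many conjugacy classes of finite subgroups and each is contained in a maximal one ($D_8$ or $D_{12}$), it suffices to treat a handful of cases and pass to subfans/subgroups for the rest.

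First I would handle the two maximal cases. For $D_{12}=\langle A,J\rangle$, take $\Sigma$ to be the fan whose rays are the six primitive vectors forming the $D_{12}$-orbit of $(1,0)$ under the standard hexagonal action (this is the fan of the del Pezzo surface of degree $6$, the blow-up of $\P^2$ at three torus-fixed points); one checks that it is already smooth and complete, that $A$ and $J$ preserve it, and — crucially — that no element of $\GL(2,\Z)$ outside $D_{12}$ permutes these six rays, so $\Aut_\Sigma=D_{12}$. For $D_8=\langle B,J\rangle$ take the fan of $\P^1\times\P^1$ with rays $\pm(1,0),\pm(0,1)$ (smooth and complete), whose automorphism group is the dihedral group of the square, namely $D_8$. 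This already disposes of all cyclic subgroups $C_6,C_3,C_2,C_4,C_1$ and the dihedral subgroups $D_6,D_6',D_4,D_4',D_2,D_2'$ provided one is willing to realize each as $\Aut_\Sigma$ for a possibly different fan — so the real work is producing, for each remaining class, a fan whose automorphism group is exactly that class and no bigger.

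The key construction for the remaining classes is \emph{symmetry-breaking by subdivision}: start from a highly symmetric complete fan invariant under a small group $G$, then equivariantly subdivide (star-subdivide at chosen cones, i.e.\ blow up torus-fixed points) in a way that is preserved by $G$ but destroyed by every larger finite subgroup of $\GL(2,\Z)$, and finally refine once more if necessary to reach smoothness while keeping $G$-invariance. Concretely: for $C_6$ take the hexagonal fan above but with rays only $\{(1,0),(1,1),(0,1),(-1,0),(-1,-1),(0,-1)\}$ arranged so that the reflection $J$ is broken — in fact the del Pezzo $6$ fan already has $\Aut_\Sigma=D_{12}\supset C_6$, so instead subdivide one pair of opposite cones to leave only the rotation symmetry; for $C_4$ use a subdivision of the $\P^1\times\P^1$ fan breaking the reflections; for $C_3$ use the fan of $\P^2$ with rays $(1,0),(0,1),(-1,-1)$ refined to break $J$ down to the cyclic rotation of order $3$ — but note $\Aut$ of the $\P^2$ fan is $D_6$, so one subdivides to kill the reflections; for $C_2=\langle-I\rangle$ subdivide a generic fan so the only nontrivial symmetry is central inversion; and for the small dihedral classes $D_2=\langle C\rangle$, $D_2'=\langle J\rangle$, $D_4$, $D_4'$, $D_6$, $D_6'$ one similarly takes a fan invariant under the desired reflection(s) (and $-I$ when present) but arranged asymmetrically enough that no additional rotation or reflection preserves it. In every case smoothness is arranged last by choosing all two-dimensional cones to be unimodular, which a star-subdivision respects when the refining ray is primitive and chosen inside the cone.

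The main obstacle, and where the care is needed, is the \emph{sharpness} of the statement $\Aut_\Sigma=G$ rather than merely $\Aut_\Sigma\supseteq G$: one must verify that no element of the ambient maximal group ($D_8$ or $D_{12}$) outside $G$ sends the fan to itself. Since $\Aut_\Sigma$ acts by permuting the finitely many rays $\Sigma(1)$ and preserving which subsets span cones, this is a finite check for each of the eleven fans — but it is the step that dictates exactly how much subdivision is required, and it is easy to under-subdivide. A clean way to organize it is to record, for each fan, the unordered multiset of pairs (primitive ray vector, self-intersection number of the corresponding divisor); $\Aut_\Sigma$ must preserve this decorated cyclic sequence, and choosing the self-intersections to be pairwise distinct except for the symmetry one wants forces $\Aut_\Sigma=G$. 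I would present the proof as a table listing, for each conjugacy class, the rays of a suitable $\Sigma$ together with a one-line justification that its automorphism group is exactly that class, rather than writing out each verification in full.
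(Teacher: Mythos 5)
Your proposal is correct and follows essentially the same route as the paper: the paper's proof likewise exhibits, for each of the eleven conjugacy classes, an explicit smooth complete fan (obtained by symmetry-breaking subdivisions of the hexagonal and square fans) and certifies that $\Aut_\Sigma$ is exactly the desired group by means of the cyclic sequence of integers $a_v$ with $a_vv=u+w$, which is precisely your decorated sequence of self-intersection numbers. The only difference is one of presentation: the paper displays the fans in figures and leaves the finite verification as an exercise, whereas you describe the construction recipe without listing the rays.
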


\begin{proof}
 For each group in Table~\ref{table:subgroups} we display a smooth fan with that
 automorphism group.
 Figure~\ref{Fig:D12} shows the primitive generators of the one-dimensional cones in
\begin{figure}[htb]
 \begin{tabular}{|c|c|c|c|c|}\hline
  $D_{12}$&$D_{6}$&$D_{6}'$&$C_6$&$C_3$\raisebox{-1.5pt}{\rule{0pt}{14pt}}\\\hline
  \includegraphics{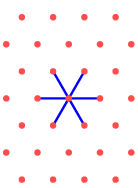} &
  \includegraphics{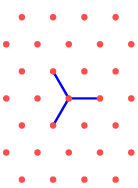} &
  \includegraphics{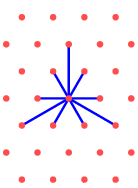} &
  \includegraphics{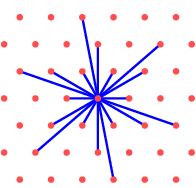} &
  \includegraphics{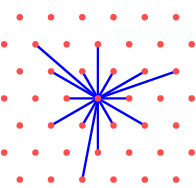}\\\hline
 \end{tabular}
 \caption{Fans with an automorphism of order three.}
 \label{Fig:D12}
\end{figure}
 complete fans with an automorphism of order three, where the lattice is drawn 
 with $D_{12}$-symmetry.
 Figure~\ref{Fig:D8} shows those whose automorphism group is a subgroup of $D_8$.
\begin{figure}[htb]
 \begin{tabular}{|c|c|c|c|}\hline
  $D_{8}$&$D_4$&$D_4'$\raisebox{-1.5pt}{\rule{0pt}{14pt}}\\\hline
  \includegraphics{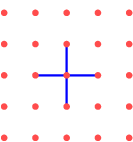}&
  \includegraphics{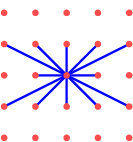}&
  \includegraphics{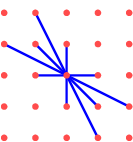}\\\hline
 \end{tabular}

 \bigskip

 \begin{tabular}{|c|c|c|c|}\hline
  $C_4$&$C_2$&$D_2$&$D_2'$\raisebox{-1.5pt}{\rule{0pt}{14pt}}\\\hline
  \includegraphics{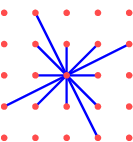}&
  \includegraphics{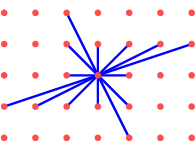}&
  \includegraphics{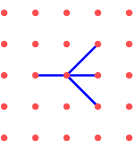}&
  \includegraphics{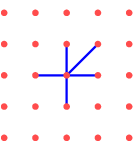}\\\hline
 \end{tabular}

 \caption{Fans whose automorphism group is a subgroup of $D_8$}
 \label{Fig:D8}
\end{figure}
 For these, we have drawn the lattice with $D_8$-symmetry.

 Each of these fans visibly exhibit the claimed symmetry groups.
 To see that they have no more automorphisms, first note that for every primitive vector
 $v$ in these fans there is an integer $a_v$ such that
\[
    a_v v\ =\ u+w\,,
\]
 where $u,v,w$ are consecutive primitive vectors in the fan.
 See \cite[\textsection 2.5]{Fulton}.
 Arranging these integers in cyclic order according to the order of their primitive
 vectors around the origin gives a cyclic sequence.
 For example, the fan for $C_3$ yields the cyclic integer sequence
\[
  (1,2,3,1,4,1,2,3,1,4,1,2,3,1,4)\,.
\]
 We leave it an exercise that the symmetry group of the  cyclic sequence of a fan
 $\Sigma$ equals the automorphism group of $\Sigma$, and that the sequences for
 the fans in Figures~\ref{Fig:D12} and~\ref{Fig:D8} have the claimed symmetry groups.
\end{proof}
 Dolgachev and Iskovskikh used arithmetic toric surfaces with the fans shown for
 $D_{12}$ and $D_8$ to study the plane Cremona group~\cite{DoIs}. 

\subsection{Calculation of Galois cohomology groups}

We compute the Galois cohomology groups 
$H^1(K/k , {}_\varphi \scrT)$.
First we consider the case that $\calG = \Gal(K/k) \subset \GL(2,\Z)$ contains an element of
order three. 
Then we consider the remaining Galois groups, all of which are subgroups of $D_8$.

\subsubsection{Galois groups with an element of order three.}

Start with the dihedral group
$ 
D_{12} = \langle \rho,\mbr\mid\mbr\rho=\rho^{-1}\mbr\ \text{ and }\ \rho^6=\mbr^2=e\rangle\,,
$
and consider the following subgroups
\begin{equation}
\label{eq:subgps}
N_0 = \langle \rho^2, \mbr \rangle \vartriangleleft D_{12} \quad \text{and} \quad
H_0=  \langle \rho^3, \mbr \rangle < D_{12} .
\end{equation}

The composition
\[ 
D_{12} \longrightarrow D_{12}/N_0 = \{ \bar 1, \bar \rho \} \cong \Z/2\Z \xrightarrow{\ \ \xi\ \ } \Aut(\Z) = \Z^{\times} ,
\]
where $ \xi $ is the alternating representation, gives $ \Z $ the structure of a $ D_{12} $-module that we denote by $ \frA $. Since $[D_{12}:~H_0]=3$, the induced module
\begin{equation}\label{eq:induced}
  \frC \ :=\ \Ind_{H_0}^{D_{12}}( \Res^{D_{12}}_{H_0} \frA)
   \ =\ \Z[D_{12}]\otimes_{\Z[H_0]} \Res^{D_{12}}_{H_0}(\frA)
\end{equation}
is a lattice of rank $3$ generated by $e_1 = 1\otimes 1$, $e_2= \rho \otimes 1$, 
and $e_3 = \rho^2\otimes 1$,
and its $\Z[D_{12}]$-module structure is determined by
 \begin{equation}  \label{eq:action}  
  \begin{array}{rclcrclcrcl}
   \rho \cdot e_1&=&e_2\,,&& \rho \cdot e_2&=&\phantom{-}e_3\,,&& \rho \cdot e_3&=&- e_1\, ,\\
      \mbr \cdot e_1&=&e_1\,,&&  \mbr \cdot e_2&=&- e_3\,,         && \mbr \cdot e_3&=&- e_2\,.
  \end{array}
 \end{equation}
The module $\frC$  comes with a natural $\Z[D_{12}]$-module epimorphism
$\pi \colon \frC \to \frA$ defined by
 \begin{equation}
  \label{eq:epimorphism}
   \pi( me_1 + ne_2 + pe_3 )\ =\ 1\cdot m + \rho \cdot n + \rho^2 \cdot p\ =\ m-n+p.
 \end{equation}
It follows that $\frB := \ker{(\pi)}$ is the sublattice $\frB =\Z\{v_1,v_2\}\subset\frC$ generated by $v_1 := e_1-e_3$ and $v_2:= e_1 + e_2$.
Using~\eqref{eq:action}, we see that the actions of $\rho$ and $\mbr$ on $\frB$
with respect to the basis $\{ v_1 , v_2 \}$ are represented by the matrices $A$ and $J$ 
of Table~\ref{table:subgroups}, respectively. 
 We conclude that the $\Z[D_{12}]$-module structure on 
$\Z^2$ given by the monomorphism $ \varphi \colon D_{12} \to GL(2,\Z) $ that sends 
$ \rho \mapsto A$, $\mbr \mapsto J $ is $\frB$ 
and that it fits into 
a short exact sequence of $\Z[D_{12}]$-modules
\begin{equation}
\label{eq:sesD}
     0\ \longrightarrow\ \frB\ \xrightarrow{\ \  j\ \ }\  
    \frC\  
  \xrightarrow{\ \ \pi\ \ }\ \frA\ \longrightarrow\ 0\, .
\end{equation}

Let $K/k$ be a Galois extension whose Galois group $ \calG = \Gal(K/K) $ has an element of
order three and comes with an embedding 
$ \varphi \colon \calG \longrightarrow  \Aut_\Sigma \subset GL(2,\Z) $,
where $ \Sigma $ is the fan of a smooth complete toric surface. 
Up to conjugacy, $ \varphi $ identifies $ \calG $ with one of the following subgroups  of 
 $D_{12} \colon D_{12}, D_6, C_6, D'_6$ or $ C_3 $.

In this context, we obtain subextensions of $ K/k $
\begin{equation}
\label{eq:subextensions}
E:= K^{\calG \cap H_0}, \quad F:= K^{\calG \cap N_0} \quad \text{and} \quad L:= K^{\calG \cap H_0 \cap N_0}
\end{equation}
that fit into a diagram
 \begin{equation}
  \label{eq:subext}
   \raisebox{60pt}{\xymatrix{
    & K \ar@{-}[d] & \\
    & L  \ar@{-}[dl] \ar@{-}[dr] & \\
    E\ar@{-}[dr] & & F \ar@{-}[dl] \\
    & k &
   }}
 \end{equation}
Consider the homomorphism
\begin{equation}
\label{eq:gamma}
   \gamma\ \colon\ \brauer{k|E}\ \longrightarrow\ \brauer{F|L}\,,
\end{equation}
obtained by base extension from $k$ to $F$, and consider the homomorphism
 \begin{equation}
   \label{eq:eta}
    \eta\ \colon\ \brauer{E|L}\ \longrightarrow\ \brauer{k|F}\,,
 \end{equation}
induced by the norm map $N_{E/k} : \units{E} \to \units{k}$. 
Denote the kernel of $\eta$ by $\etakernel$.

\begin{theorem}
\label{thm:D12}
Let $ \calG = \Gal(K/k) $ have an element of order three and 
$ \varphi \colon  \calG \to \Aut_\Sigma \subset GL(2,\Z) $ be as above. 
Then there is a canonical isomorphism
\[
   H^1(\calG, {_\varphi\scrT}(K) ) \ \cong \  \coker{(\pi^0)} \oplus \ker{(\pi^1)},
\]
where $ \coker{(\pi^0)} $ and $ \ker{(\pi^1)} $ 
are as described below.
\begin{center}
\begin{tabular}{l||c||c}
\cline{1-3}
%
\rowcolor[gray]{.8} \Blue{ $ \calG $ } &
 \Blue{$  \coker{(\pi^0)} $ } & \Blue{ $ \ker{(\pi^1)}$  }  \\ \hline\hline
\rowcolor[rgb]{0.96,0.96,0.9}
$D_{12} = \langle A, J \rangle     $ &
$ \displaystyle \frac{\brauer{F|L}}{\gamma(\brauer{k|E}) } $ & 
$ \etakernel $ \raisebox{-12.5pt}{\rule{0pt}{30.5pt}} \\
\rowcolor[gray]{.8}
$D_6 = \langle A^2 , JA \rangle $  &
$ \displaystyle \frac{\brauer{F|K}}{\gamma(\brauer{k|E}) } $ &
$ 1 $     \raisebox{-13pt}{\rule{0pt}{31pt}}  \\
\rowcolor[rgb]{0.96,0.96,0.9} 
$C_6 = \langle A  \rangle $  &
$ \displaystyle \frac{\brauer{F|L}}{\gamma(\brauer{k|E}) } $ & 
$ \etakernel $  \raisebox{-12.5pt}{\rule{0pt}{30.5pt}} \\
\rowcolor[gray]{.8}
$D'_6 = \langle A^2 , J \rangle $  &
$  \brauer{k|E}  $ &
$  1  $           \\
\rowcolor[rgb]{0.96,0.96,0.9}
$C_{3} = \langle A^2  \rangle$ &
$ \brauer{k|K}    $ &
$ 1  $ \\ \hline
\end{tabular}
\end{center}

\end{theorem}

\begin{rem}
Whenever no confusion is likely to arise, we suppress the notation $\Res^G_H(M)$ and 
write $M$ to denote the restricted module as well.  
Also, to stress the distinction between the Galois group $ \calG $ and its image under
$\varphi$, we often use $ \rho $ and $ \mbr $ instead of $ A $ and $ J $.   
\hfill\qed
\end{rem}


\begin{proof}

\noindent{\bf Step 1: General strategy.} \ \ 
Suppose that $\calG \subseteq  D_{12}$ is one of the subgroups in Table \ref{table:subgps}.
 Then 
 \[
 \scrT_{\varphi}(K) = \Res^{D_{12}}_\calG(\frB)  \otimes \units{K}
 \]
 as a
 $\Z[{\calG}]$-module.
From the long exact sequence in Galois cohomology of~\eqref{eq:sesD},
  \begin{multline}
   \label{eq:long}
      \quad H^0(\calG,\frC\otimes \units{K}) \ \xrightarrow{\ \pi^0 \ }\ 
      H^0(\calG,\frA\otimes \units{K})\ \longrightarrow\ 
     H^1(\calG,\frB\otimes \units{K}) \xrightarrow{j^1} \\
   H^1(\calG,\frC\otimes \units{K} )\ \xrightarrow{\ \pi^1\ }\ 
    H^1(\calG,\frA\otimes \units{K} ) \ \longrightarrow\ \dotsb  \quad 
\end{multline}
we extract a short exact sequence
\begin{equation}
\label{eq:ses-1}
1 \to \coker{(\pi^0)} \longrightarrow H^1(\calG, \frB\otimes \units{K} ) \xrightarrow{\ \ j^1 \ \ } \ker{(\pi^1)} \to 1 .
\end{equation}
Next we identify $ \coker{(\pi^0) } $ and $ \ker{(\pi^1)} $ and show that the sequence splits.

First, observe that
$
\Res^{D_{12}}_\calG(\frA) 
$
is given by the composition
\[
\calG \longrightarrow \calG/\calG\cap N_0 \hookrightarrow D_{12}/N_0 \xrightarrow{ \ \ \xi \ \ } \units{\Z}.
\]
Denote by $ \widehat {\frA} $ the composition $ \calG/\calG\cap N_0 \longrightarrow \units{\Z} $. 
 Since $\calG \cap N_0$ acts trivially on $\Res^\calG_{\calG\cap N_0}(\frA)$, 
 we have 
 \[ 
 H^1(\calG \cap N_0, \frA \otimes \units{K}) = H^1(\calG\cap N_0, \units{K}) = 1,
 \]
 by Hilbert's Theorem 90. As a consequence, the inflation map in the exact sequence
 \eqref{eq:inf-rest} for the normal subgroup 
 $\calG \cap N_0\lhd \calG$ gives the isomorphisms
 \begin{multline}\label{Eq:first}
  \quad  H^i\left( \calG /\calG\cap N_0, \widehat\frA \otimes \units{F} \right)\ =\ 
   H^i\left( \calG /\calG\cap N_0, (\widehat\frA \otimes \units{K})^{N_0} \right)\\
  \ \xrightarrow{\ \sim\ } \
   H^i( \calG , \Res^{D_{12}}_\calG(\frA) \otimes \units{K} )\,, \qquad
 \end{multline}
for all $ i $.

Now, observe that $ \calG \backslash D_{12}/ H_0 = \{ I \} $ holds for any $ \calG  $ appearing in the theorem, and hence the double-coset formula (see, for example, \cite[Thm.~4.2.6]{Evens}) gives
\begin{align}
\label{eq:double_coset} 
\Res^{D_{12}}_{\calG}(\frC)  & =
\Res^{D_{12}}_{\calG}\! \left( \Ind_{H_0}^{D_{12}}( \Res^{D_{12}}_{H_0}\frA) \right) \ = \
\Ind^{\calG}_{\calG \cap H_0 }\! \left( \Res_{\calG \cap H_0}^{H_0}( \Res^{D_{12}}_{H_0}\frA) \right) \\
& = \Ind^{\calG}_{\calG \cap H_0 }\! \left( \Res_{\calG \cap H_0}^{D_{12}}\frA \right) . \notag
\end{align}
Therefore,
\begin{align}
\label{eq:ResC}
H^*(\calG, \Res^{D_{12}}_\calG(\frC) \otimes \units{K}) & = 
H^*(\calG, \Ind^{\calG}_{\calG\cap H_0}(\Res^{D_{12}}_{\calG\cap H_0}(\frA)) \otimes \units{K})  \\ 
\notag
&= 
H^*(\calG \cap H_0, \Res^{D_{12}}_{\calG\cap H_0}(\frA) \otimes \units{K}),
\end{align}
for all $ * $, by Shapiro's Lemma. Next, we use \eqref{Eq:first} and \eqref{eq:ResC} to obtain the cohomology groups displayed in Table~\ref{table:subgps}.

\begin{table}[!ht]
\caption{Auxiliary data.}
\label{table:subgps}
\begin{tabular}{l||c|c|c|c|c}
%
\rowcolor[gray]{.8} 
 \Blue{$ \calG $ } &
 \Blue{$ D_{12}$ } & \Blue{ $ D_6 $  }  & \Blue{$ D'_6 $} &
 \Blue{$ C_6 $}  & \Blue{$ C_3 $}  \\ \hline\hline
\rowcolor[rgb]{0.96,0.96,0.9}
$ \calG \cap H_0 $ & $ H_0 $& $\langle JA^3 \rangle$ & $ \langle  J \rangle$  & $\langle A^3 \rangle $ & $\langle I\rangle $  
 \\
\rowcolor[gray]{.8}
$ \Res^{D_{12}}_{\calG \cap H_0} \frA $ & $ \Res^{D_{12}}_{ H_0} \frA $ & $ \xi $ & $ \Z $ & $ \xi $ & $ \Z $  
 \\
\rowcolor[rgb]{0.96,0.96,0.9} 
$ H^0( \calG, \frC\otimes \units{K} ) $ &  $ \ker(N_{L/E}) $ &  $ \ker(N_{K/E}) $& $ \units{E} $ &  $ \ker(N_{K/E}) $&  $ \units{K} $
\\
\rowcolor[gray]{.8}
$ H^1( \calG, \frC\otimes \units{K} ) $ & $ \brauer{E|L} $ & $ \brauer{E|K} $ & $ 0 $  &$ \brauer{E|K} $  & $ 0 $
\\ \hline\hline
\rowcolor[rgb]{0.96,0.96,0.9} 
$ \calG / \calG \cap N_0 $ & $ D_{12}/N_0 $ & $ \{ \overline{I}, \overline{JA} \} $ & $ \{ \overline{I} \} $& $ \{ \overline{I}, \overline{A} \} $ & \rule{0pt}{11.5pt}
$ C_3 $  \\ 

\rowcolor[gray]{.8}
$ \widehat{\frA} $ & $ \xi $  & $ \xi $ & $ \Z $ &  $ \xi$ & $ \Z $ 
 \\
\rowcolor[rgb]{0.96,0.96,0.9} 
$ H^0( \calG, \frA\otimes \units{K} ) $ & $ \ker(N_{F/k}) $ &  $ \ker(N_{F/k}) $& $ \units{k} $ &  $ \ker(N_{F/k}) $&  $ \units{k} $
\\
\rowcolor[gray]{.8}
$ H^1( \calG, \frA\otimes \units{K} ) $ & $ \brauer{k|F} $ & $ \brauer{k|F} $ & $ 0$  &$ \brauer{k|F} $  & $ 0 $
 \\
 \hline
\end{tabular}
\end{table}


For all $ \calG $ in the table, except for $ \calG = D_{12} $, the calculations follow directly from \eqref{Eq:first}, \eqref{eq:ResC} and \eqref{Eqn:periodicity}, together with
Section~\ref{section: norm brauer}.
In all cases, the arguments follow essentially the steps below.

\noindent{\bf Step 2: The case $ \calG = D_{12} $.  }\ \ 
In this case, the $\Z[D_{12}/N_0]$-module $\widehat\frA$ is isomorphic to the alternating
 representation $\xi$ of $D_{12}/N_0\cong \Z/2 $ and since $ D_{12}/N_0  = \Gal(F/k)$ we obtain
 \begin{equation}
 \label{eq:br1}
    H^1(D_{12}, \frA\otimes \units{K} )\ \cong \ H^1(\Gal(F/k), \xi \otimes \units{F} ) 
   \ \cong \ H^2(\Gal(F/k), \units{F} ) \  = \ \brauer{k|F}.
 \end{equation}
 The isomorphisms come  from \eqref{Eq:first} and  \eqref{Eqn:periodicity}, respectively, 
 and the last equality is from Section~\ref{section: norm brauer}.

Similarly,  it follows from \eqref{eq:ResC} that
 $
    H^1(D_{12}, \frC \otimes \units{K} )\ \cong \ H^1(H_0 , \Res^{D_{12}}_{H_0}(\frA) \otimes \units{K} ) .
$
 To compute this last group, we use the same arguments as for~\eqref{eq:br1}, but for 
 the normal subgroup $N_0 \cap H_0 \lhd H_0$ whose quotient is isomorphic to $\Z/2\Z$.
 This gives 
 \begin{multline}
 \label{eq:D12case} \qquad
    H^1(D_{12}, \frC\otimes \units{K})\ \cong\ 
    H^1 \left( H_0 ,   \frA \otimes \units{K}  \right)\\ 
    \cong\     H^1 \left( H_0/H_0 \cap N_0,   \widehat{\frA} \otimes \units{L}  \right)\ 
    \cong\  \brauer{E | L}\,.\qquad
 \end{multline}

 A direct calculation now shows that the homomorphism
\[
  \pi^1\ \colon\ \brauer{E|L}\ =\ H^1( D_{12}, \frC\otimes \units{K} ) 
    \ \longrightarrow\     H^1(D_{12}, \frA\otimes \units{K})\ =\ \brauer{k | F}
\]
 from~\eqref{eq:long} is the homomorphism $\eta$ \eqref{eq:eta}
 given by the norm map.   Therefore, in the case $ \calG = D_{12} $ one finds that $ \ker{\pi^1} = \etakernel = \ker{\eta}$.

Now we identify $\coker(\pi^0)$.
As in \eqref{eq:br1}, we obtain
 \[ H^0(D_{12}, \frA\otimes \units{K} ) \cong  H^0(\Gal(F/k), \xi \otimes \units{F} ) 
   \ \cong \   (\xi \otimes \units{F} )^{\Gal(F/k)}.  \]
The latter is identified with those $ a \in \units{F}$ such that $ a = \displaystyle \frac{1}{\rho(a)} $, i.e. $ N_{F/k}(a) = a \rho(a) = 1 $. Therefore, $ H^0(D_{12}, \frA\otimes \units{K} ) =  \ker{(N_{F/k})}.$
 
 Similarly, as in \eqref{eq:D12case} we obtain
\[
   H^0(D_{12}, \frC\otimes \units{K}) \cong H^0(H_0, \frA \otimes \units{K}) 
   \ = \ ( \frA\otimes \units{K})^{H_0}\,.
\]
 An element $ z \in \frA\otimes \units{K} \equiv \units{K} $ is fixed by $ H_0 $ if and only if $ z=  \mbr(z) $ and $ z = \frac{1}{\rho^3(z)} $.
 In other words, $ z $ lies in $ \units{L} $ and $ N_{L/E}(z) = z \rho^3(z) = 1 $. 
Thus we have the identification
$ H^0(D_{12}, \frC\otimes \units{K}) \cong \ker{(N_{L/E})}$.

By~\eqref{eq:epimorphism}, the map 
$\pi^0 \colon (\frC\otimes \units{K})^{D_{12}} \to (\frA\otimes \units{K})^{D_{12}}$
sends $x$ to $\pi^0(x) = \frac{x \rho^2(x)}{\rho(x)}$ and, since the identity
$x \rho^3(x) =1$ implies $\frac{1}{\rho(x)} = \rho^4(x)$, we have 
$\pi^0(x) = x \rho^2(x) \rho^4(x)=N_{L/F}(x)$.

By Hilbert's Theorem 90, the map $a\mapsto \frac{a}{\rho(a)}$ 
from $\units{F}$ to $\ker (N_{F/k})=(\frA\otimes\units{K})^{D_{12}}$ is surjective.
This induces a surjection $q \colon \units{F}\to \coker(\pi)$.
We examine the kernel of this map.

First, if $a \in \units{k} \subset \units{F}$ then $\rho(a) = a$ so $\frac{a}{\rho(a)} = 1$, hence $\units{k} \subset \ker q$.
Second, we claim $N_{L/F}(\units{L}) \subset \ker q$.
If $b \in \units{L}$ and $a = N_{L/F}(b) = b \rho^2(b) \rho^4(b)$ then
\[
  \frac{a}{\rho(a)} = \frac{b \rho^2(b) \rho^4(b)}{\rho(b) \rho^3(b) \rho^5(b)}
  = \frac{b}{\rho^3(b)} \, \rho^2\left( \frac{b}{\rho^3(b)} \right) \, \rho^4\left( \frac{b}{\rho^3(b)} \right)
  = N_{L/F}\left( \frac{b}{\rho^3(b)} \right) \, .
\]
Now $\frac{b}{\rho^3(b)} \in \ker N_{L/E}$, so $\frac{a}{\rho(a)} \in N_{L/F}(\ker N_{L/E}) = \Img(\pi^0)$ and $a \in \ker q$ as claimed.
In particular, $q$ factors through $\units{F} / N_{L/F}(\units{L}) = \brauer{F | L}$
and indeed through the quotient of this by the subgroup $\Img(\gamma)$,
where $\gamma \colon \brauer{k|E} \to \brauer{F|L}$ is induced by the inclusion
$\units{k}\hookrightarrow\units{F}$.

Conversely, suppose $a \in \units{F}$ has $q(a) = 0$, so $\frac{a}{\rho(a)} \in \Img(\pi^0) = N_{L/F}(\ker N_{L/E})$.
Let $\frac{a}{\rho(a)} = N_{L/F}(b)$, $b \in \ker N_{L/E}$.
By Hilbert's Theorem 90 there is an element $c \in \units{L}$ such that $b = \frac{c}{\rho^3(c)}$.
Then
\[
  \frac{a}{\rho(a)} = N_{L/F}\left( \frac{c}{\rho^3(c)} \right)
  = \frac{c}{\rho^3(c)} \, \rho^2\left( \frac{c}{\rho^3(c)} \right) \, \rho^4\left( \frac{c}{\rho^3(c)} \right)
  = \frac{c \rho^2(c) \rho^4(c)}{\rho(c) \rho^3(c) \rho^5(c)}
  = \frac{ N_{L/F}(c) }{ \rho N_{L/F}(c) } .
\]
But then
\[
   a  N_{L/F}(c^{-1})\ =\ \rho\left(a  N_{L/F}(c^{-1})\right)\,,
\]
 so that $a  N_{L/F}(c^{-1})\in F^{\langle\rho\rangle}=k$.
 Hence $[a] = [a N_{L/F}(c^{-1})] \ \in \ \Img(\gamma)$,
 where $\gamma \colon \brauer{k|E} \to \brauer{F|L}$ is induced by the inclusion
 $\units{k}\hookrightarrow\units{F}$.
Thus we have shown that
\begin{equation}
\label{eq:cokerpi0}
\mbox{coker}(\pi^0) \cong \frac{\brauer{F|L}}{\gamma( \brauer{k|E} )}\, .
\end{equation}
 
To complete the proof  we need to exhibit a map 
 \[ 
    s \colon \ker{(\eta)} = \ker{(\pi^1)} \ \longrightarrow \ 
     H^1(D_{12}, \frB \otimes \units{K})
 \]
 splitting the short exact sequence~\eqref{eq:ses-1}.
  Denote by $ [\alpha]_{E|L} \in \brauer{E|L} $ the class of an element $ \alpha \in \units{E} $ and observe that  $ [\alpha]_{E|L} $ is in the kernel of $ \eta $ if and only if 
\begin{equation}
\label{eq:kernel}
\alpha\rho(\alpha)\rho^2(\alpha) = N_{E/k}(\alpha) = N_{F/k}(b) = b \rho(b) 
\end{equation}
for some $ b \in \units{F} .$ Hence, $ \rho^2(b) = \mbr(b) = b$ and the following relations hold:
\begin{equation}
\label{eq:alpha}
\alpha = \frac{b}{\rho(\alpha)} \rho\left( \frac{b}{\rho(\alpha)} \right) \quad \text{and}\quad \mbr\left(\frac{\rho(b)}{\alpha} \right) = \frac{\rho(b)}{\alpha}.
\end{equation}

Now, define 
\begin{equation}
C_\rho(\alpha) = \left( \frac{\rho(\alpha)}{b}, 1 \right) \quad \text{and} \quad C_\mbr(\alpha) = \left( \frac{\rho(b)}{\alpha}, \frac{\alpha}{\rho(b)} \right).
\end{equation}
It follows directly from the relations \eqref{eq:alpha} that $ C_\rho(\alpha) $ and $ C_\mbr (\alpha) $ define a cocycle $ C(\alpha) $ for a cohomology class $ [C(\alpha) ] $ in $ H^1 ( D_{12}, \frB \otimes \units{K}) $.
Moreover, a calculation shows that the cohomology class of the cocycle $ C(\alpha) $ does
not depend on the choice of $ b \in \units{F} $  satisfying~\eqref{eq:kernel}.

Suppose that $ [\alpha]_{E|L} = [1]_{E|L} $, in other words, $\alpha = N_{L/E}(w) =w\rho^3(w)$
for some $ w \in \units{L} $.    
Here we can choose $ b = w \rho^2(w) $ to satisfy \eqref{eq:kernel}. 
Defining  $ t = \left( 1,  \frac{\rho(w) \rho^5(w)}{w} \right) $ 
we see that
\[
   C_\rho(\alpha)\ =\ t^{-1} {^\rho t} 
   \qquad \text{and} \qquad 
   C_{\mbr}(\alpha)\ =\ t^{-1}{^{\mbr} t}\,.
\]
That is, if $ \alpha $ gives the trivial element in $ \brauer{E|L} $
then the cocycle $ C(\alpha) $ represents the trivial class in cohomology. 
Hence the map 
\[
s \colon [\alpha]_{E|L} \in \ker(\eta) \longmapsto [C(\alpha)] \in H^1(D_{12},\frB\otimes \units{K})
\]
is a well-defined homomorphism.

By definition, the homorphism $ j^1 \colon H^1(D_{12},\frB\otimes \units{K}) \to H^1(D_{12}, \frC\otimes \units{K}) $ sends 
$ [C(\alpha)] $ to $ [ jC(\alpha)]$, where $ jC(\alpha) $ is the cocycle determined by
\[
jC_\rho(\alpha) = \left( \frac{\rho(\alpha)}{b}, 1, \frac{b}{\rho(\alpha)} \right) \quad\text{and}\quad
jC_\mbr(\alpha) = \left( 1, \frac{\alpha}{\rho(b)}, \frac{\alpha}{\rho(b)} \right) .
\]  
A calculation shows that this cocyle is equivalent to the cocycle $ \hat C $
determined by 
\[
\hat C_\rho = \left( \alpha, 1, 1 ) \right) \quad\text{and}\quad \hat C_\mbr = (1, \rho(\alpha), \rho^2(\alpha) ) .
\]
Under the identification $ H^1(D_{12}, \frC\otimes \units{L}) \equiv \brauer{E|L} $ the cocyle
$ [\hat C ] $ is sent to $ [\alpha]_{E|L}$. 
In other words $ j^1 \circ s \colon \ker{(\eta)}\to \ker{(\eta)} $ is the 
identity, which shows that $s$ splits the short exact sequence~\eqref{eq:ses-1}.
\smallskip

 \noindent{\bf Step 3: The remaining cases.} \ 
 
 When $ \calG = C_6 $, the proof proceeds exactly as when 
$ \calG = D_{12} $, since in both cases  all norm maps involved can be written using only
 elements in $ C_6 $. 
The only difference here is that $ L = K $.
 \smallskip
 
When $ \calG = D'_6 $ or $ \calG = C_3 $ one has $ H^1(\calG, \frC
 \otimes \units{K} ) = 1 $ and hence $ \ker{(\pi^1)} = 1 $.  
See Table \ref{table:subgps}.  On the other hand, the map $ \pi^0 $ coincides with $ N_{E/k} \colon \units{E} \to \units{k} $ in both cases, once one observes that $ E = K $ when $ \calG = C_3 $.  It follows from \eqref{Eq:H^2} that $ \coker(\pi^0)$ coincides with $ \brauer{k|E} $. 
 \smallskip
  
  When $ \calG = D_6 $ we have, as above, that 
  $ \pi^1 $ coincides with $ \eta \colon \brauer{E|K} \to \brauer{k|F} $.
 We claim that $ \eta $ is injective. 
Indeed, suppose $ \alpha \in \units{E} $ satisfies $ \eta( [ \alpha ]_{E|K}] ) = [1]_{k|F} $, in other words  $ \alpha \rho^2(\alpha) \rho^4(\alpha) = b \rho \mbr(b) $, with $ \alpha = \mbr \rho^3 (\alpha) $ and $ b = \rho^2(b) $.
  Therefore,
  \[ \alpha = \frac{b}{\rho^2(\alpha) } \frac{\rho \mbr (b) }{\rho^4(\alpha) } =  
  \frac{b}{\rho^2(\alpha) } \frac{\mbr\rho^5 (b) }{\rho^4( \mbr \rho^3(\alpha)) } =
  \frac{b}{\rho^2(\alpha) } \mbr\rho^3 \left( \frac{b }{\rho^2(\alpha) } \right) = N_{K/E}\left( \frac{b}{\rho^2(\alpha) }\right).
  \]
 It follows that $ [\alpha]_{E|K} = [1]_{E|K} $, and hence $ H^1(D_6, \frB\otimes \units{K}) = \frac{\brauer{F|K} }{\gamma(\brauer{k|E} )}.$
\end{proof}


\subsubsection{Subgroups of $D_8$}
The remaining Galois groups are subgroups of $D_8$, which is realized as a 
subgroup of $\GL(2,\Z)$ via the map $\DeCo{\varphi}\colon(\rho,\mbr)\mapsto(B,J)$.
Let $D_4\subset D_8$ be the normal subgroup of $D_8$ generated by $\mbr\rho$ and $\rho^2$.
Let $\frA$ be $\Z$ equipped with a $\Z[D_4]$ action where $\mbr\rho$ acts trivially and
$\rho^2$ acts by $-1$.
Define
 \begin{equation}  \label{eq:VD8}
   \frC\ :=\ \Ind_{D_4}^{D_8} ( \frA )\ =\ \Z[D_8]\otimes_{\Z[D_4]} \frA\, .
 \end{equation}
With respect to the basis $\DeCo{e_1} := 1\otimes 1$ and $\DeCo{e_2}:=\rho \otimes 1$,
the matrices representing $\rho$ and $\mbr$ are $B$ and $J$, respectively, and hence
$\frC$ is the $\Z[D_8]$-module structure on $\Z^2$.
For a subgroup $\calG$ of $D_8$, let $\varphi$ be the restriction to $\calG$ of the map
$\varphi\colon D_8\to\GL(2,\Z)$.

\begin{theorem}\label{thm:D8}
 The remaining Galois cohomology groups are given below.
\[
  \begin{tabular}{l||l}
   \cline{1-2}
   %
   \rowcolor[gray]{.8}
   \Blue{ $\calG$ } &
   \Blue{$ H^1(\calG,\scrT_{\varphi}(K)) $ }  \\ \hline\hline
   \rowcolor[rgb]{0.96,0.96,0.9}\rule{0pt}{13pt}
   $D_8 = \langle B, J \rangle $ &  $\brauer{K^{D_4} | K^{D_2}}$ \\
   \rowcolor[gray]{.8}\rule{0pt}{13pt}
   $D_4 = \langle -I,  C \mbr\rho \rangle  $ &  $\brauer{k|K^{D_2}}  \oplus \brauer{k|K^{D_2}}$ \\
   \rowcolor[rgb]{0.96,0.96,0.9}\rule{0pt}{13pt}
   $D'_4 = \langle -I, J  \rangle$ &  $\brauer{K^{C_2}|K}$  \\
   \rowcolor[gray]{.8}\rule{0pt}{13pt}
   $C_4 = \langle B  \rangle  $ & $ \brauer{K^{C_2}|K}$ \\
   \hline
 \end{tabular}
  \qquad
 \begin{tabular}{l||l}
   \cline{1-2}
   %
   \rowcolor[gray]{.8}
   \Blue{ $\calG$ } &
   \Blue{$ H^1(\calG,\scrT_{\varphi}(K)) $ }  \\ \hline\hline
   \rowcolor[rgb]{0.96,0.96,0.9}
   $D_2 = \langle C \rangle$ & $\brauer{k|K} $  \\
   \rowcolor[gray]{.8}
   $D'_2 = \langle J \rangle  $      &  $ 1 $  \\
   \rowcolor[rgb]{0.96,0.96,0.9}
   $C_2 = \langle -I \rangle  $ & $ \brauer{k | K} \oplus \brauer{k | K}$ \\
   \rowcolor[gray]{.8}
   $C_1 = \langle I \rangle $ &  $1$ \\
   \hline
 \end{tabular}
\]

\end{theorem}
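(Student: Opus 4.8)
The plan is to treat all seven subgroups $\calG\leq D_8$ at once, using that the lattice $\scrT_\varphi=\Z^2$ is precisely $\Res^{D_8}_{\calG}\frC$, where $\frC=\Ind_{D_4}^{D_8}(\frA)$ is the induced lattice of \eqref{eq:VD8}. In contrast with the $D_{12}$ case, here $\frC$ is itself the rank-two lattice, so no connecting homomorphism from a three-term sequence of tori is needed---the cohomology can be read off directly from the induced structure. First I would restrict $\frC$ to $\calG$ by the Mackey double-coset formula \cite[Prop.~1.5.11]{NSW}. Since $D_4$ is normal of index two in $D_8$, the double cosets $\calG\backslash D_8/D_4$ are the $\calG$-orbits on $D_8/D_4\cong\Z/2\Z$; so if $\calG\subseteq D_4$ then $\Res^{D_8}_{\calG}\frC\cong\Res^{D_4}_{\calG}(\frA)\oplus\Res^{D_4}_{\calG}({}^{\rho}\!\frA)$, a sum of two order-two characters of $\calG$, and if $\calG\not\subseteq D_4$ then $\Res^{D_8}_{\calG}\frC\cong\Ind^{\calG}_{\calG\cap D_4}(\Res^{D_4}_{\calG\cap D_4}\frA)$. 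Here ${}^{\rho}\!\frA$ denotes the $\rho$-conjugate of $\frA$, which has $\rho^2$ and $\mbr\rho$ both acting by $-1$.

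For each summand I would then combine the projection formula $\Ind^{\calG}_{H}(A)\otimes\units{K}=\Ind^{\calG}_{H}(A\otimes\units{K})$ with Shapiro's Lemma to reduce to
\[
   H^1\bigl(\calG,\ \Ind^{\calG}_{H}(\chi)\otimes\units{K}\bigr)\ \cong\ H^1\bigl(H,\ \chi\otimes\units{K}\bigr),\qquad H=\calG\cap{}^{g}D_4,\ \ \chi={}^{g}\!\frA\,.
\]
Each such $H$ is trivial, cyclic of order two, or the Klein four-group, and the character $\chi$ of $H$ is either trivial or has an index-two kernel $H_0\lhd H$. If $\chi$ is trivial, $H^1(H,\units{K})=1$ by Hilbert's Theorem~90. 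If $\chi$ is nontrivial, $H_0$ acts trivially on $\units{K}$, so $H^1(H_0,\chi\otimes\units{K})=H^1(H_0,\units{K})=1$ by Hilbert~90, and the inflation--restriction sequence \eqref{eq:inf-rest} for $H_0\lhd H$ gives $H^1(H,\chi\otimes\units{K})\cong H^1\bigl(H/H_0,\ \xi\otimes\units{(K^{H_0})}\bigr)$ with $H/H_0\cong\Z/2\Z$ and $\xi$ its sign module; the $\Z/2\Z$-periodicity \eqref{Eqn:periodicity} and the identification of $H^2$ with a relative Brauer group then give $H^1(H,\chi\otimes\units{K})\cong\brauer{K^{H}|K^{H_0}}$. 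Summing over the summands reproduces each row of the table: for instance $\calG=D_8$ gives $H=D_4$ and $H_0=\langle\mbr\rho\rangle=D_2$, hence $\brauer{K^{D_4}|K^{D_2}}$; $\calG=C_4$ and $\calG=D'_4$ each give a single $H=\langle\rho^2\rangle=C_2$ on which $\frA$ restricts to the sign character, hence $\brauer{K^{C_2}|K}$; for $\calG=D_4$ one gets the two characters $\frA$ and ${}^{\rho}\!\frA$ (the eigenlines of $\mathrm{diag}(1,-1)$), producing the two Brauer summands; $\calG=D'_2=\langle\mbr\rangle$ has $\Res^{D_8}_{\calG}\frC$ the regular module $\Z[\Z/2\Z]$, so $H^1=1$; and $D_2$, $C_2$, $C_1$ are handled identically.

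I do not anticipate a real obstacle: the homological ingredients---Shapiro's Lemma, Hilbert's Theorem~90, and the periodicity \eqref{Eqn:periodicity}---are exactly those used for $D_{12}$ in Theorems~\ref{thm:D12} and \ref{thm:D12-1}, and they apply here more directly. The actual work is bookkeeping: for each of the seven subgroups one must identify the double cosets $\calG\backslash D_8/D_4$, compute the conjugated characters ${}^{g}\!\frA$ together with their kernels, and match the resulting fixed fields $K^{H}$ and $K^{H_0}$ with the names $K^{D_4}$, $K^{D_2}$, $K^{C_2}$ used in the statement. A little extra attention is needed in the cases $\calG=D_4$ and $\calG=C_2$, where $\frC$ splits into two characters and one must check which pair of relative Brauer groups appears.
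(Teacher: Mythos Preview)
Your proposal is correct and follows essentially the same route as the paper: identify $\frC=\Ind_{D_4}^{D_8}(\frA)$, restrict to each $\calG$ via Mackey, then apply Shapiro's Lemma, Hilbert~90, inflation--restriction, and the periodicity~\eqref{Eqn:periodicity} to land in a relative Brauer group. Your treatment is in fact slightly more precise than the paper's in the case $\calG=D_4$, where you correctly record the decomposition as $\frA\oplus{}^{\rho}\!\frA$ rather than $\frA\oplus\frA$; the second summand has kernel $\langle\mbr\rho^3\rangle$, so the two Brauer factors are $\Br(k\,|\,K^{\langle\mbr\rho\rangle})$ and $\Br(k\,|\,K^{\langle\mbr\rho^3\rangle})$, which need not coincide as subgroups of $\Br(k)$---worth noting when you write up the $D_4$ row.
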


\begin{proof}
 We freely use \eqref{Eqn:periodicity}. 

 When $\calG=D_8$, note that 
 $H^1(D_8,\scrT_{\varphi}(K))=H^1(D_8, \Ind^{D_8}_{D_4}(\frA)\otimes\units{K})=
 H^1(D_4, \frA \otimes \units{K})$, by Shapiro's Lemma.
 The restriction of $\frA$ to $D_2 = \langle\mbr \rho \rangle\lhd D_4$ is the
 trivial $\Z[D_2]$-module $\Z$.
 Similar arguments as in the proof of Theorem~\ref{thm:D12} imply that 
 \begin{align*}
  H^1(D_4,\frA \otimes \units{K}) &\ \cong\  H^1(D_4/D_2,(\frA \otimes \units{K})^{D_2})\\
  &\  =\   H^1(\Gal(K^{D_2}/K^{D_4}), \xi \otimes \units{(K^{D_2})}) 
   \ =\ \brauer{K^{D_4}|K^{D_2}}
 \end{align*}
where $\xi$ is the alternating $\Z[\Z/2]$-module.
The last equality is by \eqref{Eqn:periodicity} 
and the identification of 
second cohomology with the Brauer group.

When $\calG=D_4 = \langle\rho^2, \mbr \rho\rangle$, note that
$\Res^{D_8}_{D_4}(\frC)=\Res^{D_8}_{D_4}( \Ind_{D_4}^{D_8}( \frA)) = \frA \oplus \frA$.
Thus we need only compute $H^1(D_4, \frA\otimes \units{K})$, which 
is $\Br(K^{D_4}|K^{D_2})=\Br(k|K^{D_2})$.
\smallskip


When $\calG=D'_4$, we have $D_8= D'_4 \cdot D_4$, and $D'_4\cap D_4=C_2$, 
so the double coset formula implies that 
$\Res^{D_8}_{D'_4}(\frC) = \Res^{D_8}_{D'_4}\bigl( \Ind^{D_8}_{D_4} (\frA)\bigr) =
\Ind^{D'_4}_{C_2}\bigl( \Res^{D_4}_{C_2}(\frA) \bigr)$. 
Therefore,
 \begin{align*}
  H^1(D'_4, {_\varphi}\scrT(K)) &\ =\ H^1(D'_4, \Ind^{D'_4}_{C_2}\bigl(
  \Res^{D_4}_{C_2}(\frA) \bigr) \otimes \units{K}  )\ \cong\ H^1(C_2, \Res^{D_4}_{C_2}(\frA \otimes \units{K} ) )\,.
 \end{align*}
However, $\Res^{D_4}_{C_2}(\frA) = \xi$, and so 
\[
  H^1(C_2, \Res^{D_4}_{C_2}(\frA \otimes \units{K} ) )\ =\ 
  H^1(C_2, \xi \otimes \units{K})\ \cong\ H^2(C_2, \units{K})\ =\ \brauer{K^{C_2}|K}\,.
\]


When $\calG = C_4 = \langle\rho \rangle$, we 
observe that $D_8 = C_4\cdot D_4$ and $C_4 \cap D_4 = C_2$.
Then the same arguments show that $H^1(C_4,{_\varphi\scrT}(K))=\brauer{K^{C_2}|K}$.
\smallskip

When $\calG=D_2 = \langle \mbr\rho \rangle$, 
we have ${_\varphi}\scrT(K) = \units{K} \oplus \xi \otimes \units{K}$.
Hence Hilbert's Theorem 90 gives $H^1(D_2,{_\varphi}\scrT(K))=\Br(k|K)$.\smallskip

When $\calG=D'_2=\langle\mbr \rangle$, we have 
${_\varphi}\scrT(K) \cong \units{K}  \otimes \Z[D'_2]$ as a $D'_2$-module. 
Therefore,
\[
  H^1(D'_2,{_\varphi}\scrT(K))\ \cong\  
  H^1(D'_2, \units{K}  \otimes  \Z[D'_2] )\ \cong\ H^1( \{ I \} , \units{K})\ =\ 1\,,
\]
where the second isomorphism follows from Shapiro's lemma.
\smallskip

When $\calG=C_2=\langle\rho^2 \rangle$, 
we have ${_\varphi}\scrT(K)\cong\T(K)\otimes \xi = (\units{K} \oplus \units{K})\otimes\xi$.
Then \eqref{Eqn:periodicity} 
and the identification of 
second cohomology with Brauer groups gives
$H^1(\calG,{_\varphi}\scrT(K)) \cong\Br(k | K) \ \oplus\ \Br(k | K)$.\smallskip


When $\calG$ is the trivial group, $H^1(\calG;{_\varphi\scrT}(K)) = 1$.
\end{proof}


\subsection{Twisted forms with a given torus}

Given $ \varphi \colon \calG \to \Aut_\Sigma\subset \GL(2,\Z) $, our calculations 
show that the cohomology group  $ H^1(\calG, {_\varphi \scrT}(K) ) $ depends only on the
conjugacy class in $ \GL(2,\Z) $  of the subgroup $ \varphi(\calG) $. 
It is the quotient set  
 \begin{equation}\label{Eq:quotient}
    \frac{H^1(\calG, {_\varphi \scrT}(K) ) }{H^0(\calG, {_\varphi \Aut_\Sigma}) }
 \end{equation}
 that exhibits a dependency on the fan $ \Sigma$, as described in Theorem~\ref{Th:splitting}.

The group $  H^0(\calG, {_\varphi \Aut_\Sigma})$ is  the centralizer of the image $
\varphi(\calG) $ in $ \Aut_\Sigma $, denoted here by $ \sfC_\Sigma(\calG) $, once we identify $
\calG $ and $ \Aut_\Sigma $ with one of the groups in Table \ref{table:subgroups}. 
For simplicity, let $ \sfC(\calG) $ denote the centralizer of $ \varphi(\calG) $ in $ \GL(2,\Z) $.

Although we will not write a complete list with the 
quotient~\eqref{Eq:quotient} displayed for all possible pairs $ \calG \subset \Aut_\Sigma
\subset \GL(2,\Z)$, we will discuss the case when $\calG$ has order $2$ and leave to the reader
the task of calculating the remaining cases.

Suppose that $ \calG = \Gal(K/k) $ has order $2 $.  A conjugacy class of homomorphisms 
$ \varphi \colon \calG  \to \Aut_\Sigma $  is determined by a conjugacy class in $ \Aut_\Sigma $ of (possibly trivial) involutions 
$ \sigma \in \Aut_\Sigma .$ 
Writing \DeCo{$\Aut_\Sigma(2)$} for the set of conjugacy classes of involutions in
$\Aut_\Sigma$, we have 
\[
H^1(K/k,\TAutS) \ =\  
   \coprod_{\sigma \in \Aut_\Sigma(2)} H^1(\calG , {_\sigma}\scrT(K)) / \sfC_\Sigma(\sigma)\, .
\]

Up to conjugacy in $\GL(2,\Z)$, there are four involutions, namely
$I$, $J$, $C$, and $-I$, corresponding to the subgroups $C_1$, $D'_2$, $D_2$, and $C_2$ of
$\GL(2,\Z)$ (see Table~\ref{table:subgroups}).
Then Theorem~\ref{thm:D8} gives
 \[ 
   H^1(\calG, {_\sigma}\scrT(K)) \cong
    \begin{cases}
       1                  &  \text{if $\sigma \sim I$}, \\
       1                  & \text{if $\sigma \sim J$}, \\
     \brauer{k|K}             & \text{if $\sigma \sim C$}, \\
     \brauer{k|K} \oplus \brauer{k|K}   & \text{if $\sigma \sim -I$}, 
    \end{cases}
 \]
where $\sim$ denotes conjugacy.

The action of $\sfC_\Sigma(\sigma)$ is obviously trivial in the first two cases, and we claim
that the action in the third case is trivial, as well. 
In fact all of $\sfC(\sigma)$ acts trivially on $ H^1(\calG, {_\sigma \scrT}(K) ) $  when $\sigma \sim C$.

To see this, denote a cocyle $ \bc $ representing 
$[\bc] \in H^1(\calG, {_\sigma}\scrT(K))$ by its value  
$ \bc_g = (a, b) \in \units{K}\times \units{K} $ on the
non-identity element $g \in \calG$. 
The cocycle condition for $\bc_g = (a, b) $ is 
$ (1,1) = (a,b) {^g (a,b) } = (a,b) \left( g(a), \frac{1}{g(b)} \right) $, i.e.,
$ a\, g(a) = 1$ and $ b = g(b)$. It follows from Hilbert's Theorem 90 that $ a = \frac{u}{g(u) } $ for some $ u \in \units{K} $ and one can write any cocyle in the form $ \bc_g = \left( \frac{u}{g(u)}, b \right) $, with $ u \in \units{K} $ and $ b \in \units{k} $.
The cocycle $ \bc_g $  is equivalent to 
\[ 
t^{-1} \bc_g {^g t}\ =\
\left( \frac{1}{x}, \frac{1}{y} \right) \left( \frac{u}{g(u)},b\right) \left( g(x), \frac{1}{g(y)} \right) = 
\left( \frac{g(x)}{x}\frac{u}{g(u)}, \frac{b}{y\, g(y)} \right) 
\] 
for any $ t =( x, y) \in \units{K} \times \units{K} $. It follows that, by choosing $ x = u $ one can always find an equivalent cocycle of the form $ (1, \alpha ) $, with $ \alpha \in \units{k} $.  In particular, the explicit isomorphism
$ H^1(\calG, {_\sigma}\scrT(\C)) \cong \brauer{k|K} $ is given by sending an element $ [\alpha]_{k|K} $ to the class of the cocyle $ \bc_g = (1,\alpha) $. 
Observe that $ \sfC(C) = \{ \pm I , \pm C \} $. The element $ -I  $ sends $ \bc_g = (1, \alpha) $ to $ {^{-I} \bc}_g = \left( 1, \frac{1}{\alpha} \right) $, and taking $ t = (1, \frac{1}{\alpha}) $ one sees that 
$ {^{-I} \bc}_g =  t^{-1} \bc_g {^g t} $. Therefore, $ -I $ acts trivially on cohomology.  Similarly, $ {^C\bc}_g = (1,\frac{1}{\alpha} ) $ and the previous argument  applies, thus showing that $ \sfC(C) $ acts trivially on 
$ H^1(\calG, {_\sigma \scrT}(K))$.

In the fourth case, with $\sigma = -I$, Theorem~\ref{thm:D8} gives $H^1(\calG, {_\sigma}\scrT(K)) = \brauer{k|K} \oplus \brauer{k|K} $. Explicitly, it is easy to see that the cocycle condition for 
$ \bc_g = (a,b) $ is precisely $ (a, b)  \in (\units{k}, \units{k} ) $, whereas two cocycles $ (a, b) $ and $ (a', b') $ are equivalent if and only if $ a' = a\, x g(x) = a\, N_{K/k}(x) $ and $ b' = b\, y g(y) = b\, N_{K/k}(y) $, for some $ x, y \in \units{K} $. 

It follows that $ \GL(2,\Z) $ acts on $ \brauer{k|K} \oplus \brauer{k|K} $ via its usual action on $\units{k} \times \units{k}$ and, since   $ \brauer{k|K} $ is a $ \Z/2\Z $ vector space, one concludes that this action descends to an action of $ \GL(2,\Z/2\Z) $.
Therefore $ -I = A^3 = B^2 $ and $ C $ act trivially, and the actions of $ J $ and $ B$ coincide: $ {^J[a,b]} = {^B[a,b]} = [b,a] $.
Furthermore, $ {^A[a,b]} = [b,ab] $ and $ {^{A^2}}[a,b] = [ab,a] $.

\begin{rem}
\label{rem:actions}
We identify two basic actions on $ \brauer{k|K}\oplus \brauer{k|K}$: a $ \Z/2\Z $-action interchanging coordinates, $ [a,b] \mapsto [b,a] $, and a $ \Z/ 3\Z $-action with orbits of the form 
\begin{equation}
\label{eq:action3}
 [a,b] \mapsto [b,ab] \mapsto [ab, a ] \mapsto [a, a^2b ] \equiv [a,b].
 \end{equation}
The calculations above show that the action of a group in Table \ref{table:subgroups} that contains $ -I $ on $ H^1(\calG,{_\varphi \scrT}(K) ) $ is either trivial or descends to the actions of $ \Z/2\Z $ or $ \Z/3\Z$ described above.
Note that the quotient set
$ \left( \brauer{k|K}\oplus \brauer{k|K}\right)/\left( \Z/2\Z \right)  $ is the set-theoretic $ 2 $-fold symmetric product $ SP_2(\brauer{k|K}) $ of $ \brauer{k|K} $.
\hfill\qed
\end{rem}

\begin{example}
The case of real toric surfaces follows immediately from the discussion above.
In this case $ \brauer{\R} = \brauer{\R|\C} = \{ [1], [-1]\} $. 
One sees from \eqref{eq:action3} that for
$ \Aut_\Sigma = D_{12}, D_6$ there are two
orbits $ \{ [1,1] \} $ and $ \{ [1,-1], [-1,1], [-1,-1] \}$.  
Similarly, for $ \Aut_\sigma = D_8, D'_4, C_4 $ 
there are three orbits
$ \{ [1,1] \} $, $ \{ [1,-1], [-1,1] \} $ and $ \{ [-1,-1] \} $. 

This should be compared to Theorem~5.3.1 of Delaunay's Thesis~\cite{D04}.
Note that Delaunay's types I, IV correspond to our $\sigma = I, -I$,
while her types II, III correspond to $\sigma$ a reflection, $C$ or $J$;
the distinction between her types II and III is not between $C$ and $J$, rather whether the
reflection $\sigma$ fixes a two-dimensional cone of $\Sigma$.
\hfill\qed
\end{example}

We conclude by summarizing the case of Galois groups of order $ 2 $ in Table~\ref{Table: surfaces over quadratic extension}, whose last column contains the sizes of 
$ H^1( \calG, {_\sigma \scrT}(K) )/ H^0(\calG, \Aut_\Sigma) $  for $ K/k = \C/\R $.

\begin{table}[!ht]
\caption{Classification of arithmetic toric surfaces over a quadratic field extension.}
\label{Table: surfaces over quadratic extension}
\begin{tabular}{l||l|l|c}
\rowcolor[gray]{.8}
\Blue{$ \sigma $}  & \Blue{$ \Aut_\Sigma $} & 
\Blue{$ H^1( \calG, {_\sigma \scrT}(K) )/ H^0(\calG, \Aut_\Sigma) $} & 
\Blue{$ | H^1( \C/\R , {_\sigma \scrT}) / \sfC_\Sigma(\sigma) | $} \\ \hline\hline
\rowcolor[rgb]{0.96,0.96,0.9}
$ I $ &  any  & $1 $ & $ 1 $ \\
\rowcolor[gray]{.8}
$ J $ & $ D_{12}, D_8, D'_6, D'_4, D'_2 $ & 1 & $ 1 $\\
\rowcolor[rgb]{0.96,0.96,0.9}
$ C $ & $ D_8, D_4, D_2 $  & $ \brauer{k|K} $ & $ 2 $ \\
\rowcolor[gray]{.8}
$ -I $ & $ D_{12}, D_6 $ & $ \left( \brauer{k|K} \oplus \brauer{k|K} \right)/(\Z/3\Z) $ & $ 2 $  \\
\rowcolor[rgb]{0.96,0.96,0.9}
$ -I $ & $ D_{8}, D'_4, C_4 $ & $ SP_2(\brauer{k|K}) $ & $ 3 $ \\
\rowcolor[gray]{.8}
$ -I $ & $ D_4, D_2 $ & $ \brauer{k|K} \oplus \brauer{k|K} $ & $ 4 $ 
\end{tabular}
\end{table}

\providecommand{\bysame}{\leavevmode\hbox to3em{\hrulefill}\thinspace}
\providecommand{\MR}{\relax\ifhmode\unskip\space\fi MR }
\providecommand{\MRhref}[2]{%
  \href{http://www.ams.org/mathscinet-getitem?mr=#1}{#2}
}
\providecommand{\href}[2]{#2}

\end{document}